\numberwithin{equation}{section} 
\newcommand{\R}{\mathbb{R}} 
\newcommand{\N}{\mathbb{N}} 
\newcommand{\E}{\mathbb{E}} 
\newcommand{\sL}{\mathrm{L}}
\newcommand{\sH}{\mathrm{H}}
\newcommand{\sV}{\mathrm{V}}
\newcommand{\dd}{\mathrm{d}} 
\newcommand{\dds}{\mathrm{d}s}
\newcommand{\ddt}{\mathrm{d}t}
\newcommand{\Div}{\mathrm{div\,}} 
\newcommand{\qspace}{\Uset} 
\newcommand{\Prj}{\mathcal{P}} 
\newcommand{\Prb}{\mathbb{P}} 
\newcommand{\Tr}{\mathrm{Tr}} 
\newcommand{\la}{\left\langle}
\newcommand{\ra}{\right\rangle}
\newcommand{\Cp}{C_{\mathcal{P}}} 
\newcommand{\Uset}{\mathcal{U}}
\newcommand{\Vset}{\mathcal{V}}
\newcommand{\rot}{\mathrm{rot}}
\newcommand{\Span}{\mathrm{span}}
\newtheorem{Th}{Theorem}[section]
\newtheorem{Def}[Th]{Definition}
\newtheorem{Coro}[Th]{Corollary}
\newtheorem{Le}[Th]{Lemma}
\newtheorem{Prop}[Th]{Proposition}
\newtheorem{Rem}[Th]{Remark}
\newtheorem{Hyp}[Th]{Hypothesis}
\title{$\alpha$-Navier-Stokes equation perturbed by space-time noise of trace class}
\author{Ludovic Gouden\`ege 
\footnote{CNRS, F\'ed\'eration de Math\'ematiques de CentraleSup\'elec FR 3487, Univ. Paris-Saclay, CentraleSup\'elec, F-91190 Gif-sur-Yvette, France}
\and
Luigi Manca 
\footnote{LAMA, Univ. Gustave Eiffel, UPEM, Univ. Paris Est Creteil, CNRS, F-77447 Marne-la-Vall\'ee, France}
 }
\begin{document}

\maketitle

\begin{abstract}
We consider a stochastic perturbation of the $\alpha$-Navier-Stokes model. The stochastic perturbation is an additive space-time noise of trace class.
Under a natural condition about the trace of operator $Q$ in front of the noise, we prove the existence and uniqueness of strong solution, continuous in time in classical spaces of $\sL^{2}$ functions with estimates of non-linear terms. It is based on a priori estimate of solutions of finite-dimensional systems, and tightness of the approximated solution.

Moreover, by studying the derivative of the solution with respect to the initial data, we can prove exponential moment of the approximated solutions, which is enough to obtain Strong Feller property and irreducibility of the transition semigroup. This leads naturally to the existence and uniqueness of an invariant measure.
\bigskip

 Keywords : Navier-Stokes, Camassa-Holm, Lagrange Averaged alpha, stochastic partial differential equations, trace class noise.
 
 MSC : 60H15, 60H30, 37L55, 35Q30, 35Q35, 76D05
\end{abstract}

\section{Introduction}

The (stochastic) Navier-Stokes equation has been widely studied under various boundary conditions in domains of $\R^{d}$ with $d=2,3$, also the compressible or incompressible case. But the stochastic version of the $\alpha$-Navier-Stokes model, which has been introduced for modeling of turbulence, has not been studied deeply in the mathematical literature.

The deterministic version has first been introduced by Holm et al. in \cite{ChenFoiasHolmOlsonTitiWynne, HolmMarsdenRatiu:98b} as a Large Eddy Simulation (LES) model (see \cite{Leonard:74}).
Indeed, starting from the fact that a precise description of the fine scales in a turbulent flow may be irrelevant for numerical simulation in engineering applications, the complete Navier-Stokes equation could certainly be relaxed in a weaker form.

From the first study, this relaxation has been derived by applying temporal averaging procedures to Hamilton’s principle for an ideal incompressible fluid flow and Euler-Poincaré variational framework (see \cite{ChenFoiasHolmOlsonTitiWynne}). In \cite{GuermondOdenPrudhomme}, it has been interpreted as a {\em perturbation} of the Leray regularization to restore frame invariance. It is also known as the Lagrangian averaged Navier-Stokes-$\alpha$ (LANS$\alpha$) (see \cite{MarsdenShkoller}) or the viscous Camassa-Holm equation (VCH) (see \cite{BjorlandSchonbek}).


It has been shown that this model possesses a lot of physical properties (conservation laws for energy and momentum), and it is also suitable for numerical simulation. 
For instance, these equations on a fluid $v$ possess a Kelvin-Noether circulation theorem and conserves helicity (see \cite{FoiasHolmTiti:01}) which is defined on a volume $V$ by: 
\[
\mathcal{H}(V)(t)=\int_V v(t,x)\cdot (\nabla\times v(t,x))\ dx^{3}. 
\]
This $\alpha$-Navier-Stokes model must not be confused with Leray-$\alpha$ regularization of the Navier-Stokes, or a hyper-viscous Navier-Stokes equation (bi-Laplacian) since these last models do not provide the same physical properties.
We can also cite the results about higher-order Leray models and deconvolution (see \cite{LaytonLewandowski, Rebholz}).



In \cite{ChenHolmMargolinZhang}, Direct Numerical Simulations (DNS) have been realized to demonstrate that it 
reproduces most of the large scale features of Navier-Stokes turbulence (detailed previously) even when these simulations do not resolve the fine scale dynamics, at least in the case of turbulence in a periodic box.
They have compared vorticity structures and alignment, and also two point statistics (e.g. speed increments and flatness) to illustrate the altered dynamics of the alpha models.\bigskip

The original Navier-Stokes equation reads\footnote{Notice that $(v\cdot \nabla v)_i = \sum_j v_j\partial_j v_i$ for $i=1,...,d$ with $d=2,3$.}
\[
   \frac{\partial v}{\partial t}+v\cdot\nabla v +\nabla p = \nu \Delta v +f
\]
for a pressure $p$, a forcing $f$, a constant kinematic viscosity $\nu$ and a fluid $v$ of constant density. The $\alpha$-Navier-Stokes model for an incompressible fluid $v$ is given by
\[
\begin{cases}
\dfrac{\partial v}{\partial t}+\bar v\cdot\nabla v +(\nabla \bar v)^T\cdot v+\nabla p = \nu \Delta v +f,\\
\Div \bar v=0\\
\bar v = \left(I-\alpha^2\Delta\right)^{-1}v,
\end{cases}
\]
where we consider a periodic fluid in a box, or a fluid with homogeneous Dirichlet condition, and the initial data is given by $v|_{t=0} = v_{0}$.

Alternatively, using the Helmoltz operator $I-\alpha^2\Delta$, and eliminating $v$ by setting $v=\left(I-\alpha^2\Delta\right)\bar v$, we can write
\[
\begin{cases}
\frac{\partial (\bar v-\alpha^2 \Delta \bar v)}{\partial t}+ \bar v\cdot\nabla (\bar v-\alpha^2 \Delta \bar v) +(\nabla \bar v)^T\cdot (\bar v-\alpha^2 \Delta \bar v)+\nabla p = \nu \Delta (\bar v-\alpha^2 \Delta \bar v) +f,\\
\Div \bar v=0.
\end{cases}
\]
Sometimes the term $(\nabla \bar v)^T\cdot \bar v$ does not appear in model, since it should disappear with Leray projection. We can also use a dynamic pressure $\tilde{p}$ to obtain the $\alpha$-Navier-Stokes model in rotational form
\[
\begin{cases} 
\frac{\partial v}{\partial t}-\bar v\times( \nabla\times v)+\nabla \tilde{p}=\nu\Delta v + f,\\
\Div \bar v=0,
\end{cases}
\]
and the two equations on the vorticity $ q:=\nabla \times v$ and the helicity $H:= \langle v , \nabla \times v\rangle = \langle v , q\rangle $ are given by
\[
\begin{cases} 
\frac{\partial q}{\partial t}+\bar v \cdot \nabla q -q \cdot \nabla \bar v =\nu\Delta q + \nabla\times f,\\
\\
\frac12\frac{d H(t)}{d t} =\int_{V} \nu\Delta v \cdot q \  dx^{3}+\int_{V} (\nabla\times f) \cdot (\nabla \times v) \ dx^{3}.\\
\end{cases}
\]


In this paper, we will consider the stochastic version of $\alpha$-Navier-Stokes equation by substituting to the forcing term $f$, a time derivative of a gaussian Wiener process, to take into account random agitations or uncertainties.

In the next sections, we will clarify the definition of an abstract setting such that the stochastic $\alpha$-Navier-Stokes equation reads
\begin{equation}
   \begin{cases} 
    du+\left(\nu Au+(I+\alpha^2A)^{-1}\widetilde{B}(u,u+\alpha^2A u)\right)dt = d\xi & \text{in } \sH,\\
    u(0)=u_0\in \sH,
  \end{cases}
\end{equation}
where $\xi$ is a noise term, and 
with the property that if $\alpha=0$, this is equivalent to stochastic Navier-Stokes
\begin{equation}
   \begin{cases} 
    du+\left(\nu Au+B(u,u)\right)dt =d\xi & \text{in } \sH,\\
    u(0)=u_0\in \sH.
  \end{cases}
\end{equation}
For the stochastic Navier-Stokes equation, there exist many results since the early work of Bensoussan and Temam \cite{BensoussanTemam}. It is well-known that there exists a probabilistic weak (martingale) solution in the three-dimensional case (see for instance \cite{FlandoliGatarek, MikuleviciusRozovskii}). Concerning the uniqueness, there exists a (probabilistic) strong maximal local solution in $\mathrm{W}_{p}^{1}$ with $p>3$ obtained in \cite{BrzezniakPeszat, MikuleviciusRozovskii}. In \cite{GlattHoltzZiane, Mikulevicius}, there is existence and uniqueness of local (probabilistic) strong pathwise solution in $\mathrm{W}^{1}_{2}=\sH^{1}$. Using a Galerkin approximation and Kolmogorov equation, the authors in \cite{DaPratoDebussche} are able to construct a transition semi-group with a unique invariant measure, which is ergodic and strongly mixing.
For the two-dimensional case, there exist global results in $\sL^{2}$ space. It worths mentioning the paper \cite{CaraballoMarquezDuranReal} where the authors have studied the asymptotic behavior in a deterministic context but with some terms containing some kind of memory (e.g. delay). They prove existence, uniqueness, and exponential convergence to a stationary solution provided the viscosity is large enough.

Concerning the $\alpha$-Navier-Stokes equation, the authors of \cite{CaraballoRealTaniguchi} have proved the existence and uniqueness of variational solutions with multiplicative noise. But they have only obtained estimation of the fourth moments in the random space $\sL^{4}(\Omega)$.
In \cite{DeugoueSango}, the authors have relaxed the Lipschitz condition (but conserving the sublinearity) on the coefficients in the second member and in the multiplicative noise, to obtain the existence of weak solutions. The uniqueness is satisfied by assuming some Lipschitz conditions. They have estimations for all the moments of the solution, but the noise only appears in a finite number of modes.

In our paper, we are able to show that there exists a unique strong solution that has moments of any order for an additive infinite-dimensional space-time noise.
With an additional assumption on the noise, we show a Strong Feller-type property of the transition semigroup associated to the solution of the SPDE.
This leads naturally to the existence and uniqueness of invariant measure.
Moreover, we are able to prove exponential moments of the approximated solutions which is enough to obtain a concentration property for the invariant measure.

We will make the assumption that the noise process is of trace class (more details will be given later in Section \ref{sec:preliminary}). In Section \ref{sec:preliminary} and \ref{sec:operatorB}, we will use the classical abstract formalism of stochastic PDEs to obtain a strong and weak solution with continuous path in Hilbert spaces of square integrable functions with free divergence (see Section \ref{sec:def} for definition of solutions). Moreover, and this is the crucial point of this article, we will give very strong estimates of moment of the solution in Section \ref{sec:results}. We have clarified the dependency in the parameter $\alpha$, such that in the limit $\alpha\rightarrow 0$, we will recover the expected behavior of blow-up of the moment of classical Navier-Stokes solution.

The proof relies on the existence and uniqueness of solution for Galerkin approximated problems (Section \ref{sec:approximated}), which possesses exponential moments (Section \ref{sec:expo}). We have proved estimates in Sobolev spaces (Section \ref{sec:Sobolev}), using compactness argument (Section \ref{sec:compactness}) to make identification of the limit (Section \ref{sec:proof}).
The results in Section \ref{sec:expo} have been used to derive a priori estimates and some concentration properties for the invariant measure in Section \ref{sec:invariant}.

\section{Preliminaries and abstract formulation}\label{sec:preliminary}

Depending of the boundary conditions (periodic or homogeneous Dirichlet) we have to define spaces of free divergence vector fields to treat with classical operators of the $\alpha$-Navier-Stokes equations.
{\em In case of periodic boundary conditions}, we consider a domain $\Uset = [0,L]^{d}$ with $d=2,3$ and we set 
\[
\text{ (periodic) } \qquad \Vset=\{ \varphi\in (\mathrm{P}_{trig})^{d}: \Div \varphi=0 \text{ in } \Uset
 \text{ and } \int_{\Uset} \varphi(x) dx = 0
 \}
\]
 the classical space of vector valued trigonometric polynomial functions with free divergence and with vanishing mean. {\em In case of homogeneous Dirichlet boundary condition}, we consider a smooth domain $\Uset \subset \R^{d}$ with $d=2,3$ which is bounded, open and simply connected. 
We set 
\[
\text{ (Dirichlet) } \qquad\Vset=\{ \varphi\in (C_0^\infty(\Uset))^d: \Div \varphi=0 \text{ in } \Uset\}
\]
 the classical space of infinitely differentiable functions with free divergence and with compact support in $\Uset$. Then the spaces $\sH$ and $\sV$ are the closure of $\Vset$ in $(\sL^2(\Uset))^d$ and in $(\sH^1(\Uset))^d$ respectively.
We denote by $|\cdot|_2$, $|\cdot|_\sV$ the norms in $\sH$, $\sV$ and by $\la \cdot,\cdot\ra$ the standard scalar product in $\sH$.

\begin{Rem}
Let $\vec{n}$ denote the outward normal to $\partial \Uset$, then, following \cite{Temam}, we can characterize the space $\sH$ in the homogeneous Dirichlet case as
\[
\sH = \{ u \in (\sL^2(\Uset))^d : \Div u = 0 \text{ in }  \Uset \text{ and }\vec{n}\cdot u = 0 \text{ on } \partial\Uset\}.
\]
Its orthogonal complement in $(L^2(\Uset))^d$ is
\[
\sH^{\bot} = \{ u \in (\sL^2(\Uset))^d :  u = \nabla p  \text{ in } \Uset \text{ and } p \in \sH^{1}(\Uset)\},
\]
and the space $\sV$ is
\[
\sV = \{u \in (\sH^{1}_{0}(\Uset))^d : \Div u = 0  \text{ in } \Uset\}.
\]
\end{Rem}

We denote by $\Prj : (\sL^2(\Uset))^{d}\to \sH$ the usual orthogonal Leray projector and by $A$ the Stokes operator
\[
   A:=-\Prj\Delta : D(A)\to \sH,
\]
with domain $D(A)=(\sH^2(\Uset))^d\cap \sV$. 
The operator $A$ is a positive self adjoint operator.
Its inverse, $A^{-1}:\sH\to \sH$, is a compact self adjoint operator, thus $\sH$ admits an orthonormal basis $\{e_{k}\}_{k\in\N^{*}}$ formed by the eigenfunctions of $A$, i.e. $A e_{k} = \lambda_{k}e_{k}$, with $0< \lambda_{1}\leq \lambda_{2}\leq \cdots \leq \lambda_{k} \rightarrow+\infty$. For $\rho\in\R$, the Sobolev spaces $D(A^\rho)$ are the closure of $C_0^\infty(\qspace)$ with respect to the norm
\[
    \|x\|_{D(A^\rho)}=\left(\sum_{k\in\N^{*}}\lambda_k^{2\rho}\la x,e_k\ra^2  \right)^{\frac12}.
\]
As well known (see, for instance, \cite{FoiasHolmTiti:02}) the operator $A$ can be continuously extended to $\sV=D(A^{\frac12})$ with values in $\sV'=D(A^{-\frac12})$ such that for all $u$, $v \in \sV$
\[
\la Au,v\ra_{\sV',\sV}=(A^{1/2}u,A^{1/2}v)=\int_\qspace (\nabla u\cdot \nabla v)\ dx,
\]
Similarly $A^2$ can be continuously extended to $D(A)$ with values in $D(A)'$ (the dual space of the Hilbert space $D(A)$) such that
for all $u$, $v \in D(A)$
\[
\la A^2u,v\ra_{D(A)',D(A)}=(Au,Av).
\]
One can show that there is a constant $c>0$ such that for all $w \in D(A)$
\[
c^{-1} |Aw|_{2} \leq \|w \|_{\sH^{2}} \leq c\  |Aw|_{2}.
\]
We assume that $\xi$ is the time derivative of a gaussian noise of the form $QW(t)$, $t\geq0$ where 
\begin{itemize}
 \item $K$ is a separable Hilbert space (norm $|\cdot|_{K}$ and scalar product $\la\cdot,\cdot\ra_{K}$);
 \item $W(t),t\geq0$ is a cylindrical white noise defined on a stochastic basis $(\Omega, \mathcal{F},(\mathcal{F}_t)_{t\geq0}, \Prb)$ with values in $K$;
 \item $Q:K\to \sH$ is a Hilbert-Schmidt operator.
\end{itemize}
%
%
The equation takes the abstract form
\begin{equation}\label{eq.aNSabstract}
   \begin{cases} 
    du+\left(\nu Au+(I+\alpha^2A)^{-1}\widetilde{B}(u,u+\alpha^2A u)\right)dt =QdW(t) & \text{in } \sH,\\
    u(0)=u_0\in \sH.
  \end{cases}
\end{equation}

\begin{Hyp} \label{hyp.trace}
We assume that the operator $Q$ and $A$ satisfies the following trace class condition
\[
      \Tr[Q^*(I+A)Q]<\infty.
\]
\end{Hyp}
The next assumption will be useful when we shall study the uniqueness of an invariant measure for the semigroup associated to the solution of the SPDE. 
Essentially, it allow to use the Bismut-Elworthy formula and derive a Strong Feller-type property.
\begin{Hyp} \label{hyp.trace2}
The operator $Q:K\to \sH$ is invertible and  $D(A^{3/2})\subset D(Q^{-1})$.
\end{Hyp}

\begin{Rem} \label{rem.QAbounded}
For two separable Hilbert spaces $X,Y$, let us denote by $\mathcal{L}_2(X;Y)$ the set of Hilbert-Schmidt operators $B:X\to Y$. 
The assumption in Hypothesis \ref{hyp.trace} means that $Q\in \mathcal{L}_2(K;\sV)$.
  %
 Moreover, for any $x\in \sV$
  \[ 
    |Q^*A^{\frac12}x|^2\leq |x|_2^2\Tr[Q^*AQ].
  \]
  Then, $Q^*A^{\frac12}:\sV\to K$ can be extended to a bounded operator on $\sH$ (we still denote it by $Q^*A^{\frac12}$). 
  Similarly, for $x\in D(A)$
 \[
     |Q^*Ax|^2\leq |A^{\frac12}x|_2^2\Tr[Q^*AQ]=|\nabla x|_2^2\Tr[Q^*AQ]
 \]
 therefore $Q^*A$ can be extended to a bounded linear operator on $\sV$.

 With this in mind, we get the following formula which will be useful in the following
\begin{equation} \label{eq.QAbounded}
   |Q^*(I+\alpha^2A)x|^2 =|Q^*(I+\alpha^2A)^{\frac12}(I+\alpha^2A)^{\frac12}x|^2 \leq \Tr[Q^*(I+\alpha^2A)Q]\left(|x|_2^2+\alpha^2|\nabla x|_2^2 \right).
\end{equation}
\end{Rem}

\section{The nonlinear operator}\label{sec:operatorB}
Some results of this section can be found in \cite{FoiasHolmTiti:02}. We recall it here for completeness. First remark that, following classical description of the Navier-Stokes equation, for $u,v\in \sV$ we usually use the bilinear operator
\[
B(u,v):=\Prj\left((u\cdot\nabla)v\right)=(\nabla v)^Tu \quad\text{ such that }\quad (B(u,v))^j=\sum_{i=1}^{d}u^i\frac{\partial v^j}{\partial x^i}, \text{ for } j=1, \dots, d.
\]
If $u,v,w\in \sV$ then we can observe that
\[
   \la B(u,v),w\ra = -\la B(u,w),v\ra.
\]
For the $\alpha$-Navier-Stokes equation, recalling the vector identity
\[
u \times (\nabla \times v) :=  u\times \rot\  v = \left(\nabla v-(\nabla v)^T  \right)u
\]
we can set -by analogy with Navier-Stokes equation- a bilinear operator
\[
  \widetilde{B}(u,v):= -\Prj\left(u \times (\nabla \times v)\right).
\]
Following again the classical framework, we set $b(u,v,w)$ the trilinear operator
\[
  b(u,v,w)=\sum_{i,j=1}^{d}\la u^i\frac{\partial v^j}{\partial x^i},w^j \ra= \la (u\cdot \nabla)v,w\ra 
\]
then for $u,v,w\in \sV$
\[
    b(u,v,w)= \la (\nabla v)^Tu,w\ra =\la u, (\nabla v) w\ra.
\]
It is easy to show that for $u,v,w\in \sV$ we have
\[
   b(u,v,w)=-b(u,w,v) \text{ and } b(u,v,v)=0.
\]
This implies that for any $w\in \sV$ (actually, $w\in (L^2(\Uset))^d$ is enough)
\[
  \la \widetilde{B}(u,u),w\ra = b(u,u,w)-b(w,u,u)=b(u,u,w)=\la (u\cdot\nabla)u,w\ra= \la B(u,u),w\ra
\]
and so for $\alpha=0$ equation \eqref{eq.aNSabstract} becomes the Navier-Stokes equation.
We obtain the following identity for $u,v,w\in \sV$, since $w=\Prj w$,
\begin{eqnarray}
b(u,v,w)-b(w,v,u) &=& \la (\nabla v)^Tu,w\ra-\la (\nabla v)^Tw,u\ra = \la (\nabla v)^Tu,w\ra-\la w,(\nabla v) u\ra\\
&=& -\la \left((\nabla v) u- (\nabla v)^Tu\right),\Prj w\ra =  \la \widetilde{B}(u,v), w\ra\\
\Big(&=& \la (\nabla u)^Tw- (\nabla w)^Tu,v\ra \Big)
\end{eqnarray}


\begin{Prop} \label{prop.1.1}
We have the following estimations which will be used later in the proofs.\\
\noindent 
(i) $\widetilde{B}$ can be extended continuously to $\sV\times \sV$ with values in $\sV'$; for any $u,v,w\in \sV$ it satisfies
\begin{eqnarray*}
 &\left|\la \widetilde{B}(u,v),w\ra_{\sV',\sV}\right|\leq c|u|_2^{1/2}|u|_\sV^{1/2}|v|_\sV|w|_\sV,\\
 &\left|\la \widetilde{B}(u,v),w\ra_{\sV',\sV}\right|\leq c|u|_\sV|v|_\sV|w|_2^{1/2}|w|_\sV^{1/2}.\\
\end{eqnarray*}
\noindent
(ii) $\widetilde{B}$ satisfies :
\[
  \la \widetilde{B}(u,v),w\ra_{\sV',\sV}=-\la \widetilde{B}(w,v),u\ra_{\sV',\sV},\quad  \forall u,v,w\in \sV;
\]
\[
   \la \widetilde{B}(u,v),u\ra_{\sV',\sV}=0,\quad \forall u,v\in \sV.
\]
\[
   \la \widetilde{B}(u,v),v\ra_{\sV',\sV}=-b(v,v,u),\quad \forall u,v\in \sV.
\]

\noindent
(iii) 
\[
   \left|\la \widetilde{B}(u,v),w\ra_{D(A)',D(A)}\right| \leq c |u|_2 |v|_\sV |w|_\sV^{1/2}|Aw|_2^{1/2},\quad \forall u\in \sH, v\in \sV, w\in D(A).
\]
and 
\[
   \left|\la \widetilde{B}(u,v),w\ra\right|\leq c |u|^{1/2}_\sV |Au|^{1/2}_2|v|_\sV |w|_2,\quad \forall u\in D(A), v\in \sV, w\in \sH.
\]
(iv) For any $u\in \sV$, $v\in \sH$, $w\in D(A)$ it holds
\[
  \left|\la \widetilde{B}(u,v),w\ra_{D(A)',D(A)}\right|\leq c\left(   |u|^{1/2}_2 |u|^{1/2}_\sV|v|_2 |Aw|_2 + |u|_\sV|v|_2 |w|_\sV^{1/2}|Aw|_2^{1/2}   \right).
\]
\noindent
(v) For any $u\in D(A)$, $v\in \sH$, $w\in \sV$  it holds
\[
  \left|\la \widetilde{B}(u,v),w\ra_{\sV',\sV}\right| \leq c \left(   |u|^{1/2}_\sV |Au|^{1/2}_2|v|_2 |w|_\sV + |Au|_2|v|_2 |w|_2^{1/2}|w|_\sV^{1/2}   \right).
\]
(vi) For any $u\in \sV$, $v\in \sH$, $w\in D(A)$ it holds
\[
      \left|\la \widetilde{B}(u,v),w\ra_{D(A)',D(A)}\right|\leq c |u|_\sV|v|_2 |Aw|_2  .
\]
\end{Prop}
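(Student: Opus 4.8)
The plan is to bound $\langle \widetilde B(u,v),w\rangle$ for $u\in\sV$, $v\in\sH$, $w\in D(A)$ by exploiting the antisymmetry identity from part (ii), namely $\langle\widetilde B(u,v),w\rangle = -\langle\widetilde B(w,v),u\rangle$, which is valid for smooth fields and extends by density. The advantage of this rewriting is that the roughest factor, $v\in\sH$, is moved into the middle slot where, after an integration by parts, no derivative falls on it. Concretely, using the representation $\langle\widetilde B(w,v),u\rangle = b(w,v,u)-b(u,v,w)$ and the identities $b(w,v,u)=-b(w,u,v)$, $b(u,v,w)=-b(u,w,v)$ established earlier in this section, one rewrites everything in terms of trilinear forms in which $v$ sits in the last slot: $\langle\widetilde B(u,v),w\rangle = b(w,u,v)-b(u,w,v) = \langle (\nabla u)^T w - (\nabla w)^T u, v\rangle$ — exactly the parenthetical identity already recorded after the display preceding the Proposition.

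From there I would estimate $|\langle (\nabla u)^T w - (\nabla w)^T u, v\rangle| \le \big(|(\nabla u)^T w|_2 + |(\nabla w)^T u|_2\big)\,|v|_2$ by Cauchy–Schwarz in $\sH$. For the first term, $|(\nabla u)^T w|_2 \le |\nabla u|_2\,|w|_\infty$, and one invokes the Agmon-type / Sobolev embedding in dimension $d\le 3$ giving $|w|_\infty \le c\,\|w\|_{\sH^2} \le c\,|Aw|_2$ (using the stated equivalence $\|w\|_{\sH^2}\simeq|Aw|_2$ on $D(A)$), so this term is $\le c\,|u|_\sV|Aw|_2$. For the second term, $|(\nabla w)^T u|_2 \le |\nabla w|_\infty\,|u|_2$ would need $w\in W^{1,\infty}$, which is not controlled by $|Aw|_2$ in $d=3$; instead I would write $|(\nabla w)^T u|_2 \le |\nabla w|_4\,|u|_4$ and use $|u|_4\le c|u|_\sV$ (Sobolev, $d\le3$) together with $|\nabla w|_4 \le c\|w\|_{\sH^2}\le c|Aw|_2$ (the embedding $\sH^1\hookrightarrow L^4$ applied to $\nabla w$). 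This again yields $\le c\,|u|_\sV|Aw|_2$, and summing gives the claimed bound $|\langle\widetilde B(u,v),w\rangle_{D(A)',D(A)}| \le c\,|u|_\sV|v|_2|Aw|_2$.

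The main obstacle is the bookkeeping needed to justify that all the integrations by parts and the antisymmetry identities, which are stated for $u,v,w\in\sV$, remain valid in the limiting regularity configuration $u\in\sV$, $v\in\sH$, $w\in D(A)$: one takes $v_n\in\Vset$ with $v_n\to v$ in $\sH$, notes that both sides of the target inequality are continuous in $v$ for fixed $u\in\sV$, $w\in D(A)$ (the left side because $\widetilde B(u,\cdot)$ is linear and, by the estimate just derived, bounded from $\sH$ to $D(A)'$), and passes to the limit. A secondary point is that the Sobolev constants for $\sH^1\hookrightarrow L^4$ and $\sH^2\hookrightarrow L^\infty$ depend on the domain $\Uset$ but not on $\alpha$, which is consistent with the constant $c$ in the statement; this should be remarked but requires no real work. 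Everything else is a routine application of Hölder's inequality and the norm equivalence on $D(A)$ already recorded in Section \ref{sec:preliminary}.
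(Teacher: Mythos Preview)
Your argument for part (vi) is correct: the rewriting $\la\widetilde B(u,v),w\ra = \la(\nabla u)^T w-(\nabla w)^T u,v\ra$ is exactly the identity recorded just before the Proposition, and your two Hölder/Sobolev estimates ($|\nabla u|_2|w|_\infty$ and $|\nabla w|_4|u|_4$, both controlled by $|u|_\sV|Aw|_2$ in dimension $d\le 3$) go through cleanly, as does the density argument in $v$.

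However, the paper takes a much shorter route: parts (i)--(v) are quoted from \cite{FoiasHolmTiti:02}, and (vi) is obtained as an immediate consequence of (iv) together with the norm inequalities $|u|_2\le c|u|_\sV$ and $|w|_\sV\le c|Aw|_2$. Indeed, in the right-hand side of (iv) one bounds $|u|_2^{1/2}|u|_\sV^{1/2}\le c|u|_\sV$ in the first term and $|w|_\sV^{1/2}|Aw|_2^{1/2}\le c|Aw|_2$ in the second, which yields (vi) in one line. Your approach is more self-contained (you are effectively reproving, by hand, a coarsened version of the estimate (iv) that the paper imports from \cite{FoiasHolmTiti:02}), whereas the paper's approach is more economical but relies on the cited reference. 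Both are valid; if you already have (iv), you should just invoke it.
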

\begin{proof}
Points {\em (i)--(v)} can be found in  \cite{FoiasHolmTiti:02}.
By the results obtained in  \cite{FoiasHolmTiti:02}, for some $c>0$ it holds $c|Aw|_2\leq \|w\|_{\sH^2}\leq c^{-1}|Aw|_2$ for any $w\in D(A)$ 
and $c|A^{1/2}w|_2\leq \|w\|_{\sH^1}\leq c^{-1}|A^{1/2}w|_2$ for any $w\in \sV$.
Then, since $D(A^{1/2})=\sV$, $|w|_2\leq \|w\|_{\sH^1}\leq c|w|_\sV$ and $|w|_\sV\leq c|Aw|_\sV$ for some $c>0$ independent by  $w$.
Consequently, {\em (vi)} of Proposition \ref{prop.1.1} follows by {\em (iv)}.
\end{proof}

\begin{Rem} In \cite{LiLiuXie}, there is a study of a Leray-$\alpha$ model with fractional power of Laplace operator with periodic boundary conditions.  The abstract framework is very similar, but the assumption on the noise states that, for some sufficiently large index $N$, $Range(Q) = P_{N}\sH$ where $P_{N}$ is the projector on the finite dimensional space $Vect(e_{1}, \dots, e_{N})$. In this case, $Q$ vanishes on the complement space, such that their results only apply to finite dimensional noises (highly degenerate), but with enough noise to ensure hypoellipticity (also they need large enough viscosity).
\end{Rem}

\section{Definition of solution}\label{sec:def}
%

%
%
%
We are now able to define the concept of solution of equation  \eqref{eq.aNSabstract}.
\begin{Def}[Strong solution] \label{def.sol.strong}
 Assume that the linear operators $Q$ satisfy Hypothesis \ref{hyp.trace} and let $W(t), t\geq0$ be a cylindrical Wiener process with values in $\sH$.
 Also assume that $u_0$ is a random variable with values in $\sH$, independent by the filtration generated by $W(t),t\geq0$.
We say that a stochastic process $u(t),t\geq 0$ with values in $\sH$ is a strong solution of \eqref{eq.aNSabstract} starting by $u_0$ if
\begin{itemize}
\item  $u(t)$ has paths in $C([0,\infty[;\sH)$ and it is adapted to the filtration generated by $W(t),t\geq0$ ;
\item  for any $T>0$ and $\Prb$-almost surely, 
 \begin{equation} \label{eq.def.sol0}
    \int_0^T|Au(t)|_2\ddt +\int_0^T | (I+\alpha^2 A)^{-1}\widetilde{B}(u(t), u(t)+\alpha^2 Au(t))|_2\ddt +\Tr[Q^*Q]<\infty;
 \end{equation}
\item for any $t\geq0$ the process $u(\cdot)$ verifies 
\begin{equation}   \label{eq.def.sol}
   u(t)+\nu\int_0^tAu(s)\dd s+ \int_0^t (I+\alpha^2A)^{-1}\widetilde{B}(u(s),u(s)+\alpha^2Au(s))\dd s=  u_0+QW(t).
 \end{equation}
\end{itemize}
\end{Def}

\begin{Def}[Weak solution] \label{def.sol.weak} Let $T>0$ and let $\mu_{0}$ be a probability measure on $(\sH, \mathcal{B}(\sH))$. 
Assume that the linear operator $Q$ satisfies Hypothesis \ref{hyp.trace}. 
We say that  $(u, (\Omega, \mathcal{F}, \mathbb{P}, (\mathcal{F}_{t})_{t\in[0,T]}),W)$ is a weak solution of \eqref{eq.aNSabstract} with initial distribution $\mu_{0}$ if 
\begin{itemize}
\item  $(\Omega, \mathcal{F}, \mathbb{P}, (\mathcal{F}_{t})_{t\in[0,T]})$ is a complete filtered probability space;
\item $u$ has paths in $C([0,\infty[;\sH)$  and is adapted to the filtration $(\mathcal{F}_{t})_{t\in[0,T]}$;
\item the law of $u(0)$ is $\mu_{0}$;
\end{itemize}
Moreover,  $\Prb$-a.s., \eqref{eq.def.sol0} and \eqref{eq.def.sol} hold.
\end{Def}

\section{Main result}\label{sec:results}
\begin{Th}\label{thm.intro}
Assume that the linear operator $Q$ satisfies Hypothesis \ref{hyp.trace}.
Let $u_0$ be a random variable with values on $(\sV,\mathcal{B}(\sV))$ such that for some $k\geq1$ it holds
\[
    \E\left[\left(|u_0|_2^2+|\nabla u_0|_2^2\right)^k  \right]<\infty. 
\]
Then there exists a unique strong solution $u(t), t\geq0  $ 
of problem \eqref{eq.aNSabstract} with initial value $u(0)=u_0$. 
Moreover,   there exists a constant $c>0$, depending only on $k,Q,\qspace$ such that for any $T>0$ it holds
\begin{equation} \label{eq.aNSCHestimate}
 \begin{split}
  &\E\left[\sup_{0\leq t\leq T} \left(|u(t)|_{2}^{2}      
      \right)^k \right] 
   \leq   c\left(\E\left[\left(|u_0|_2^2+\alpha^2|\nabla u_0|_2^2\right)^k  \right]+T\right)\\
  &\E\left[\int_0^{T} \left( |u(s)|_2^2 +\alpha^2 |\nabla u(s)|_2^2\right)^{k-1} \left(|\nabla u(s)|_2^2 +\alpha^2|A  u(s)|_2^2 \right) \dd s\right]
    \leq c\left(\E\left[\left(|u_0|_2^2+\alpha^2|\nabla u_0|_2^2\right)^k  \right]+T\right)
   \end{split}
\end{equation}
\end{Th}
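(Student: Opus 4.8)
The plan is to follow the classical Galerkin scheme that the introduction already announces (Sections \ref{sec:approximated}--\ref{sec:proof}), with the key analytic input being the cancellation properties of $\widetilde B$ from Proposition \ref{prop.1.1}(ii) and the trace estimate \eqref{eq.QAbounded}. First I would set up the finite-dimensional approximation: let $P_n$ be the orthogonal projection onto $\Span\{e_1,\dots,e_n\}$, write $u_n = P_n u$, and consider the SDE
\[
   du_n + \left(\nu A u_n + P_n(I+\alpha^2 A)^{-1}\widetilde B(u_n, u_n+\alpha^2 A u_n)\right)dt = P_n Q\,dW(t),
\]
with $u_n(0) = P_n u_0$. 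Since on the finite-dimensional space the drift is locally Lipschitz and the diffusion is constant, there is a unique local strong solution; global existence will follow from the a priori bound below. The natural energy functional here is $\Phi(u) := |u|_2^2 + \alpha^2|\nabla u|_2^2 = \la (I+\alpha^2 A)u, u\ra$, because testing the equation against $(I+\alpha^2 A)u_n$ removes the awkward resolvent $(I+\alpha^2 A)^{-1}$ in front of $\widetilde B$ and, crucially, $\la \widetilde B(u_n, u_n+\alpha^2 A u_n), u_n\ra = 0$ by Proposition \ref{prop.1.1}(ii) (the first slot equals the test function).

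The core step is the a priori estimate. Applying Itô's formula to $\Phi(u_n)^k$: the drift contribution from the Stokes term gives $-2k\nu\,\Phi(u_n)^{k-1}(|\nabla u_n|_2^2 + \alpha^2|Au_n|_2^2)$ (after noting $\la A u_n, (I+\alpha^2 A)u_n\ra = |\nabla u_n|_2^2 + \alpha^2 |Au_n|_2^2$), the nonlinear term vanishes by the cancellation just described, the Itô correction from the noise is controlled by $\Tr[Q^*(I+\alpha^2 A)Q]$ times lower-order powers of $\Phi(u_n)$ using \eqref{eq.QAbounded}, and the stochastic integral is a martingale (locally, up to the exit times making everything bounded). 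Young's inequality absorbs the noise correction into a fraction of the dissipation plus a constant, yielding, after taking expectations,
\[
   \E[\Phi(u_n(t))^k] + k\nu\,\E\!\left[\int_0^t \Phi(u_n(s))^{k-1}\!\left(|\nabla u_n(s)|_2^2 + \alpha^2|Au_n(s)|_2^2\right)ds\right] \le \E[\Phi(u_n(0))^k] + c\,t,
\]
with $c$ depending only on $k, Q, \qspace$. The supremum-in-time bound then comes from the Burkholder--Davis--Gundy inequality applied to the martingale part (its quadratic variation is again controlled by $\Phi(u_n)^{2k-1}$ times the trace, hence by $\sup_s \Phi(u_n)^{k}$ times the time integral already bounded, and one absorbs the $\sup$ into the left side). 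Since $\Phi(P_n u_0) \le \Phi(u_0)$ and $\Phi$ dominates $|u_n|_2^2$, these are precisely the two inequalities in \eqref{eq.aNSCHestimate} at the Galerkin level, uniformly in $n$.

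Next I would upgrade to higher-regularity bounds needed for compactness: from the uniform $\sV$-bound and the equation, standard estimates using Proposition \ref{prop.1.1}(iii)--(vi) give a uniform bound on the drift in $L^2(0,T;\sH)$ (or in a space like $L^{4/3}(0,T;\sV')$ in the three-dimensional case), hence on the time-increments of $u_n$, which by the Aubin--Lions--Simon lemma yields tightness of the laws of $(u_n)$ in $C([0,T];\sH_{\mathrm{weak}}) \cap L^2(0,T;\sH)$; a Skorokhod-type argument produces an almost surely convergent subsequence on a new probability space, and passing to the limit in each term — the resolvent $(I+\alpha^2 A)^{-1}$ is bounded on $\sH$, the nonlinear term converges using the a priori bounds and Proposition \ref{prop.1.1} — identifies the limit as a weak solution satisfying \eqref{eq.def.sol}. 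The moment bounds \eqref{eq.aNSCHestimate} survive the limit by Fatou's lemma. Finally, uniqueness (and hence existence of a strong solution by Yamada--Watanabe, and pathwise continuity in $\sH$): for two solutions $u, \bar u$ the difference $w = u - \bar u$ solves an equation with no noise; testing against $(I+\alpha^2 A)w$, using bilinearity of $\widetilde B$, the cancellation, and the estimates of Proposition \ref{prop.1.1}(i) together with Young's inequality gives a differential inequality $\frac{d}{dt}\Phi(w) \le g(t)\Phi(w)$ with $g \in L^1_{loc}$ along the solution (thanks to the $\sV$-integrability already established), and Gronwall forces $w \equiv 0$.

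I expect the main obstacle to be twofold: first, making the a priori estimate genuinely closed — one must check that testing against $(I+\alpha^2 A)u_n$ really does eliminate the resolvent and that the only surviving nonlinear bracket is the one that vanishes, which is a small but essential algebraic point; second, the identification of the nonlinear limit in the three-dimensional case, where one does not have strong $L^2(0,T;\sV)$ compactness for $u_n$ itself but only the weaker space-time compactness, so the convergence $\widetilde B(u_n, u_n+\alpha^2 A u_n) \to \widetilde B(u, u+\alpha^2 A u)$ in the sense of distributions must be argued carefully using the specific estimates in Proposition \ref{prop.1.1}(iii)--(vi) rather than naive continuity of the bilinear map.
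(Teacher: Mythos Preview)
Your proposal is essentially correct and follows the same architecture as the paper: Galerkin approximation, It\^o on $\Phi(u_n)^k=(|u_n|_2^2+\alpha^2|\nabla u_n|_2^2)^k$ with the cancellation $\la\widetilde B(u_n,u_n+\alpha^2Au_n),u_n\ra=0$, BDG for the supremum bound, tightness plus Skorokhod, identification of the limit, and pathwise uniqueness by Gronwall combined with Yamada--Watanabe.

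The one place where your sketch diverges from the paper --- and where you are being overly pessimistic --- is the compactness step. Your a priori estimate already gives $u_n$ bounded in $\sL^2(0,T;D(A))$ (this is the $\alpha^2|Au_n|_2^2$ part of the dissipation), not merely in $\sL^2(0,T;\sV)$; that is the whole point of the $\alpha$-regularization. The paper exploits this by interpolating, $\|u\|_{\sH^{1+2/p}}^p\le \|u\|_{\sH^1}^{p-2}\|u\|_{\sH^2}^2$, to get a uniform $\sL^p(0,T;\sH^{1+2/p})$ bound, and combines it with a fractional time-regularity bound $u_n\in W^{\theta,p}(0,T;\sH)$ (obtained by estimating each term of the equation in $\sH$, using \eqref{eq.ineqB} for the nonlinearity) to invoke the Flandoli--Gatarek compact embeddings into $C([0,T];D(A^{-\rho}))\cap \sL^p(0,T;\sV)$. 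Thus one \emph{does} have strong $\sL^p(0,T;\sV)$ convergence in dimension $3$, and the identification of the nonlinear limit is straightforward continuity of the trilinear form (strong in the first slot, weak in the second via $Au_n\rightharpoonup Au$ in $\sL^2(0,T;\sH)$); the obstacle you anticipate does not arise. Your Aubin--Lions--Simon route would also work, but you should feed it the $D(A)$ bound rather than only the $\sV$ bound.
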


We denote by $B_b(\sV)$ the set of real Borel measurable and bounded functions $\varphi:\sV\to \R$.
The transition semigroup $P_t,t\geq 0$  associated to the solution of problem \eqref{eq.aNSabstract} is defined by
\[
   P_t\varphi(x)=\E\left[\varphi(u(t,x))\right]
\]
where $\varphi\in B_b(\sV)$ and $u(t,x)$ is the solution of \eqref{eq.aNSabstract} at time $t$ starting at point $x\in \sV$.

\begin{Th} \label{thm.invmeas}
 Le us assume that Hypothesis \eqref{hyp.trace}, \eqref{hyp.trace2} hold.
 Then, there exists a unique invariant probability measure $\mu$ for the transition semigroup $P_t$, $t\geq0$.
 Moreover $\mu(D(A))=1$ and for any $\varepsilon>0 $ sastisfying $-\nu+2\varepsilon\lambda_1^{-1}\Tr[Q^*(I+\alpha^2A)Q]< 0$ there exists $K_\varepsilon>0$ such that
 \begin{equation} \label{eq.muexp}
  \int_{\sV} e^{\varepsilon\left(|x|_2^2+\alpha^2|\nabla x|_2^2\right)}\left(|\nabla x|_2^2+\alpha^2|A x|_2^2 \right) \mu(dx) 
  \leq \frac{K_\varepsilon}{ \varepsilon(\nu-\varepsilon\lambda_1^{-1}\Tr[Q^*(I+\alpha^2A)Q])}.
 \end{equation}
\end{Th}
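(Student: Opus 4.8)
The plan is to combine the moment estimates of Theorem \ref{thm.intro} with a standard Krylov--Bogoliubov argument for existence, and then use the Bismut--Elworthy--Li formula (made available by Hypothesis \ref{hyp.trace2}) to establish the Strong Feller property and irreducibility, whence uniqueness follows from Doob's theorem. First I would establish \emph{existence of an invariant measure}. Applying Dynkin's formula (or rather It\^o's formula and the second estimate in \eqref{eq.aNSCHestimate} with $k=1$) to the functional $x\mapsto |x|_2^2+\alpha^2|\nabla x|_2^2$, one gets a dissipation inequality of the form $\frac{d}{dt}\E[|u(t)|_2^2+\alpha^2|\nabla u(t)|_2^2]\le -2\nu\,\E[|\nabla u(t)|_2^2+\alpha^2|Au(t)|_2^2]+\Tr[Q^*(I+\alpha^2 A)Q]$, using Proposition \ref{prop.1.1}(ii) to kill the nonlinear term (since $\la\widetilde B(u,u+\alpha^2Au),u\ra$ reduces, after the $(I+\alpha^2A)^{-1}$ pairing, to terms that vanish by the antisymmetry $\la\widetilde B(u,v),u\ra=0$ together with the identity $\la\widetilde B(u,v),v\ra=-b(v,v,u)$). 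Using the Poincar\'e inequality $\lambda_1|x|_2^2\le|\nabla x|_2^2$, time-averaging yields a uniform-in-time bound on $\frac1T\int_0^T\E[|\nabla u(s)|_2^2+\alpha^2|Au(s)|_2^2]\,ds$; since the embedding $D(A^{1/2})\hookrightarrow\sH$ and $D(A)\hookrightarrow\sV$ are compact, the time-averaged occupation measures $\frac1T\int_0^T P_s^*\delta_x\,ds$ are tight on $\sH$ (indeed on $\sV$), so Krylov--Bogoliubov gives an invariant measure $\mu$; the exponential version of the same Lyapunov computation, applied to $e^{\varepsilon(|x|_2^2+\alpha^2|\nabla x|_2^2)}$ and exploiting the exponential-moment estimates of Section \ref{sec:expo}, yields after integrating against $\mu$ the bound \eqref{eq.muexp} and in particular $\mu(D(A))=1$.

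Next I would prove the \emph{Strong Feller property}: $P_t\varphi\in C_b(\sV)$ for every $\varphi\in B_b(\sV)$ and $t>0$. The tool is the Bismut--Elworthy--Li formula
\[
  \la DP_t\varphi(x),h\ra=\frac1t\,\E\left[\varphi(u(t,x))\int_0^t\la Q^{-1}\eta^h(s),dW(s)\ra\right],
\]
where $\eta^h(s)=D_x u(s,x)h$ is the derivative of the solution flow in the direction $h$, which solves the linearized equation
\[
  \frac{d\eta^h}{ds}+\nu A\eta^h+(I+\alpha^2A)^{-1}\big(\widetilde B(\eta^h,u+\alpha^2Au)+\widetilde B(u,\eta^h+\alpha^2A\eta^h)\big)=0,\qquad \eta^h(0)=h.
\]
The key analytic inputs are: (a) a pathwise bound $|\eta^h(s)|_2^2+\alpha^2|\nabla\eta^h(s)|_2^2\le C(s,\omega)(|h|_2^2+\alpha^2|\nabla h|_2^2)$ with the random constant having finite expectation, obtained by a Gronwall argument on the linearized equation using the trilinear estimates of Proposition \ref{prop.1.1} and the exponential moments of $\int_0^t|Au(s)|_2^2\,ds$ from Section \ref{sec:expo} to control the $\exp\big(c\int_0^s|u|_\sV|Au|_2\big)$-type factor; (b) the bound $|Q^{-1}\eta^h(s)|_K\le C\|\eta^h(s)\|_{D(A^{3/2})}$ coming from Hypothesis \ref{hyp.trace2}, combined with smoothing estimates of the linearized semigroup to absorb the loss of derivatives near $s=0$ (this forces working with $\int_0^t$ of an $s^{-\gamma}$-type singular kernel that is still integrable). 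Together these give $|DP_t\varphi(x)|\le C(t)(1+\text{polynomial in }|x|_\sV)\|\varphi\|_\infty$, hence $P_t\varphi$ is locally Lipschitz on $\sV$, which is Strong Feller.

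Then \emph{irreducibility} of $P_t$ on $\sV$: since $Q$ is invertible with $D(A^{3/2})\subset D(Q^{-1})$, the noise acts on all modes; using the control problem (steer the deterministic equation with control $Qf$ from $x$ to an arbitrary neighbourhood of $y$) together with a Girsanov/support argument one shows $P_t(x,\mathcal O)>0$ for every nonempty open $\mathcal O\subset\sV$ and $t>0$ --- the dissipativity of $\nu A$ and the cubic-type a priori bounds make the control problem solvable on any time interval. Finally, \emph{uniqueness} follows from Doob's theorem: a Strong Feller, irreducible Markov semigroup that possesses an invariant measure has a unique one (and it is ergodic). I expect the \textbf{main obstacle} to be step (b) in the Strong Feller proof: reconciling the derivative loss in $Q^{-1}$ (which needs $\|\eta^h(s)\|_{D(A^{3/2})}$) with the fact that the linearized flow only propagates $\sV$-regularity, so one must carefully exploit parabolic smoothing of $e^{-\nu As}$ against the singular kernel while keeping the stochastic integrand square-integrable uniformly near $s=0$, and controlling the nonlinear coupling terms $\widetilde B(u,\cdot)$ with $u$ only in $D(A)$ on a random time set --- this is where the exponential moments of Section \ref{sec:expo} are essential rather than merely convenient.
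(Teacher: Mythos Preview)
Your overall architecture---Krylov--Bogoliubov existence from the $\sV$-dissipation estimate, Bismut--Elworthy--Li for a Strong Feller property, irreducibility, then a Doob-type conclusion---matches the paper's. The exponential-moment route to \eqref{eq.muexp} and $\mu(D(A))=1$ is also the same idea.

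The substantive difference is precisely at the point you flag as the main obstacle. You propose to recover $\|\eta^h(s)\|_{D(A^{3/2})}$ from $h\in\sV$ by parabolic smoothing of $e^{-\nu As}$, absorbing an $s^{-\gamma}$ singularity at $s=0$. The paper does \emph{not} attempt this. Instead it takes $h\in D(A)$ and runs an energy estimate on the linearized equation after multiplying by $A(I+\alpha^2A)\eta^h_m$ (Proposition~\ref{prop.Aetah}); this yields directly
\[
   \int_0^t\bigl(|A\eta^h_m|_2^2+\alpha^2|A^{3/2}\eta^h_m|_2^2\bigr)\,ds
   \le \Gamma(t,|x|_2^2+\alpha^2|\nabla x|_2^2)\bigl(|\nabla h|_2^2+\alpha^2|Ah|_2^2\bigr),
\]
with the random Gronwall factor $e^{c\int_0^t|Au_m|_2^2}$ controlled by the exponential estimate of Proposition~\ref{prop.bound.AX}. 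Consequently the paper only obtains the \emph{weak} Strong Feller statement
\[
   \lim_{|h|_{D(A)}\to 0}\ \sup_{|x-x_0|_\sV<r}\,|P_t\varphi(x+h)-P_t\varphi(x)|=0,
\]
i.e.\ continuity with respect to $D(A)$-increments, not $\sV$-increments. Similarly, irreducibility is proved only in the weak form $\Prb(|u(T,x)|_2>\delta)<1$, by comparing $u$ with the stochastic convolution $W_A$ rather than via a Girsanov/control argument.

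These weaker properties are not enough for Doob's theorem on $\sV$ as you state it. The paper compensates by first establishing $\mu(D(A))=1$ (from \eqref{eq.muexp}, via the ergodic theorem applied to a truncated version of the integrand and Proposition~\ref{prop.momexp}), and then running the uniqueness argument inside $D(A)$: any point $x_0$ in the support of $\mu$ lies in $D(A)$, the weak irreducibility puts mass near $0$, and the $D(A)$-Strong Feller transfers this to a neighbourhood of $x_0$. Thus $0$ is in the support of every invariant measure, forcing uniqueness.

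So your plan is correct in spirit, but the smoothing step you anticipate as delicate is simply bypassed in the paper at the price of a weaker Feller property; the slack is taken up by proving $\mu(D(A))=1$ \emph{before} the uniqueness argument rather than after. If your parabolic-smoothing route can be made to work it would give a genuinely stronger Feller statement, but be aware that the nonlinear terms in the linearized equation involve $Au$, so the Duhamel remainder after smoothing still carries a factor $|Au|_2$ that must be paired with the exponential moments; the paper's choice avoids having to balance this against the $s^{-\gamma}$ singularity.
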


%
%
%
%
%
\section{The approximated problem}\label{sec:approximated}

Here we use the Galerkin approximation method. Recall that we have denoted by $0<\lambda_1<\lambda_2<\ldots$ the eigenvalues of the Stokes operator $A$ in its domain $D(A)$ and by $e_1,e_2,\ldots$ the correspondent eigenvectors. 
Then, $P_n:\sH\to \sH$ is the orthogonal projection of $\sH$ onto $ \Span \{e_1,\ldots, e_n\}$.
We set $\tilde  B_n(u,v):= P_n\widetilde{B}(P_n u,P_nv)$. Clearly, the results given in Proposition \ref{prop.1.1} remain valid for $\widetilde{B}_n$.
The Galerkin approximated problem is given by the equation
\begin{equation}  \label{eq.approx}
    \begin{cases} 
    du_n+\left(\nu Au_n+(I+\alpha^2A)^{-1}\widetilde{B}_n(u_n,u_n+\alpha^2Au_n)\right)dt
    =P_nQdW(t) & \text{in } \sH,\\
    u(0)=P_n u_0. 
  \end{cases}
\end{equation}
Actually, \eqref{eq.approx} is a finite dimension ordinary stochastic differential equation with a polynomial nonlinearity.
Then, there exists a unique local solution up to a (possible) blow up time $\tau_n>0$.
The results of this section concern the existence and uniqueness of a global solution for the approximated equations (i.e., $\tau_n=\infty$ almost surely). 
Moreover, we shall show uniform estimates on the solutions $u^n$, which will be essential in order to use compactness arguments.

\begin{Th} \label{thm.moments}
 Let $u_0\in \sV$ and assume that Hypothesis \ref{hyp.trace} holds.
 Then, for any $n\in \N$, $T>0$ there exists a solution $u_n\in \sL^2([0,T];D(A))$ of problem \eqref{eq.approx}.
 Moreover, for any   $k\in \N^*$ there exists a constant $c=c(k,\qspace,Q)>0$ such that for any $n\in \N^*$, $t\geq 0$
 \begin{multline*}
    \E\left[\left( |u_n(t)|_2^2 +\alpha^2 |\nabla u_n(t)|_2^2\right)^k \right] \\ 
    +k\nu \E\left[\int_0^{t} \left( |u_n|_2^2 +\alpha^2 |\nabla u_n|_2^2\right)^{k-1} \left( |\nabla u_n|_2^2       +\alpha^2|A  u_n|_2^2 \right) \dd s\right]
     \leq \E\left[\left(|u_0|_2^2 +\alpha^2 |\nabla u_0|_2^2\right)^k\right] +ct.
 \end{multline*}
%
%
%
\end{Th}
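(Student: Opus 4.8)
The plan is to work with the finite-dimensional Galerkin system \eqref{eq.approx}, which is an SDE with polynomial (hence locally Lipschitz) coefficients, so a unique local solution $u_n$ on $[0,\tau_n)$ exists automatically; the real content is the a priori bound, which will simultaneously yield $\tau_n=\infty$ a.s. and the claimed moment estimate. The key observation is that the natural energy functional for the $\alpha$-model is $E_n(t):=|u_n(t)|_2^2+\alpha^2|\nabla u_n(t)|_2^2 = \la (I+\alpha^2A)u_n(t),u_n(t)\ra$, and that the correct test function in the abstract It\^o formula is $(I+\alpha^2A)u_n$ rather than $u_n$ itself. Pairing equation \eqref{eq.approx} with $(I+\alpha^2A)u_n$, the term $(I+\alpha^2A)^{-1}\widetilde B_n(u_n,u_n+\alpha^2Au_n)$ paired against $(I+\alpha^2A)u_n$ collapses to $\la \widetilde B_n(u_n,u_n+\alpha^2Au_n),u_n\ra$, and by the antisymmetry/zero-energy property of Proposition \ref{prop.1.1}(ii) (valid for $\widetilde B_n$ since it inherits the structure via $P_n$), this equals $\la\widetilde B_n(u_n,u_n+\alpha^2Au_n),u_n\ra = 0$ once one checks that $\la\widetilde B_n(u_n,v),u_n\ra=0$; the viscous term gives $-\nu\la Au_n,(I+\alpha^2A)u_n\ra = -\nu(|\nabla u_n|_2^2+\alpha^2|Au_n|_2^2)$, and the stochastic term contributes a martingale plus the It\^o correction $\Tr[Q_n^*(I+\alpha^2A)Q_n]\,\ddt$ where $Q_n=P_nQ$, bounded by $\Tr[Q^*(I+\alpha^2A)Q]<\infty$ from Hypothesis \ref{hyp.trace}. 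Thus
\[
\dd E_n = -2\nu\bigl(|\nabla u_n|_2^2+\alpha^2|Au_n|_2^2\bigr)\ddt + \Tr[Q_n^*(I+\alpha^2A)Q_n]\,\ddt + 2\la (I+\alpha^2A)^{1/2}u_n,(I+\alpha^2A)^{1/2}Q_n\,dW\ra.
\]

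Next I would apply the It\^o formula to $E_n^k$. This produces $k\,E_n^{k-1}\dd E_n + \tfrac{k(k-1)}{2}E_n^{k-2}\dd\la E_n\rangle$; the drift has the good term $-2k\nu\,E_n^{k-1}(|\nabla u_n|_2^2+\alpha^2|Au_n|_2^2)$, and the remaining terms $k\,E_n^{k-1}\Tr[\cdots]$ and the quadratic-variation term $\tfrac{k(k-1)}{2}E_n^{k-2}\cdot 4|(I+\alpha^2A)^{1/2}Q_n^*(I+\alpha^2A)^{1/2}u_n|_K^2$ must be controlled. Using \eqref{eq.QAbounded} in the form $|Q_n^*(I+\alpha^2A)u_n|^2 \le \Tr[Q^*(I+\alpha^2A)Q]\,E_n$ (after reorganizing the half-powers as in Remark \ref{rem.QAbounded}), the quadratic-variation term is bounded by $c\,E_n^{k-1}\Tr[Q^*(I+\alpha^2A)Q]$, so altogether the non-martingale part of $\dd(E_n^k)$ is at most
\[
-2k\nu\,E_n^{k-1}\bigl(|\nabla u_n|_2^2+\alpha^2|Au_n|_2^2\bigr)\,\ddt + c\,E_n^{k-1}\,\ddt,
\]
with $c=c(k,Q,\qspace)$. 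Since $E_n\le 1+E_n^{k}$... more carefully, $E_n^{k-1}\le c_k(1+E_n^k)$? That is false for $k\ge2$; instead I use Young's inequality $E_n^{k-1}\le \varepsilon E_n^{k-1}(|\nabla u_n|_2^2+\alpha^2|Au_n|_2^2)\cdot(\text{stuff})$ — actually the clean route is to note $E_n\le \lambda_1^{-1}(|\nabla u_n|_2^2 + \alpha^2|Au_n|_2^2)\cdot$(not quite) — so I will instead simply bound the bad term $c\,E_n^{k-1}$ by splitting: on the set where $E_n$ is large, $E_n^{k-1}\le \lambda_1 E_n^{k-1}\cdot\lambda_1^{-1}$, and $|\nabla u_n|_2^2+\alpha^2|Au_n|_2^2 \ge \lambda_1(|u_n|_2^2+\alpha^2|\nabla u_n|_2^2)=\lambda_1 E_n$ by the spectral gap, so $c E_n^{k-1}\le (c/\lambda_1)E_n^{k-2}\cdot(|\nabla u_n|^2_2+\alpha^2|Au_n|_2^2)$ — this lowers the power and iterates, or more simply, absorb a fraction of the dissipation. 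The cleanest statement for the final inequality, matching the theorem, is obtained by keeping the full coefficient $k\nu$ on the dissipative integral and bounding $c\,E_n^{k-1}\le c_T$ in expectation after a Gronwall-type step; I would present it via: take expectations (the stochastic integral is a genuine martingale after localizing by $\tau_n\wedge\tau_M$ and passing to the limit using the dissipation bound), obtaining
\[
\E[E_n(t\wedge\tau)^k] + 2k\nu\,\E\!\int_0^{t\wedge\tau}\! E_n^{k-1}\bigl(|\nabla u_n|_2^2+\alpha^2|Au_n|_2^2\bigr)\dd s \le \E[E_n(0)^k] + c\,\E\!\int_0^{t\wedge\tau}\! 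E_n^{k-1}\,\dd s,
\]
and then close the loop: using $E_n^{k-1}\le \tfrac{k-1}{k}\delta^{-k/(k-1)}\cdot\tfrac1k + \tfrac1k\delta^k E_n^k$ is the wrong direction; the right move is induction on $k$ — for $k=1$ the term $c\,\E\int E_n$ is controlled by $\lambda_1^{-1}\E\int(|\nabla u_n|_2^2+\alpha^2|Au_n|_2^2)$, absorbed into the dissipation provided we only ask for a factor $k\nu$ (as in the statement, not $2k\nu$), leaving a clean $+ct$; for general $k$, bound $E_n^{k-1}$ by interpolating against the $k=1$ and $k=k-1$ estimates already proven, or equivalently use that $c\,E_n^{k-1}\le k\nu\lambda_1\, E_n^{k-1}\cdot\lambda_1^{-1}$ and $\lambda_1 E_n\le |\nabla u_n|^2_2+\alpha^2|Au_n|_2^2$ to swallow it into the dissipative term up to adjusting constants, which is exactly why the theorem carries $k\nu$ and not $2k\nu$.

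Finally, global existence: the a priori bound $\sup_{t\le T}\E[E_n(t\wedge\tau_n)^k]<\infty$ together with continuity of paths forces $\tau_n=\infty$ a.s. (the solution cannot blow up in finite time since $E_n$ has bounded expectation on $[0,T]$, and in finite dimensions finite-time explosion would make $E_n(t\wedge\tau_n)\to\infty$), and then $u_n\in L^2([0,T];D(A))$ a.s. follows from the dissipation integral being finite since, in the finite-dimensional space $P_n\sH$, $|Au_n|_2$ and $|\nabla u_n|_2$ are equivalent norms to $|u_n|_2$ (the equivalence constant blows up in $n$, but that is irrelevant for fixed $n$) — actually directly: $\E\int_0^T|Au_n|_2^2\,\dd s \le \alpha^{-2}\E\int_0^T(|\nabla u_n|_2^2+\alpha^2|Au_n|_2^2)\,\dd s<\infty$ from the $k=1$ estimate, so $u_n\in L^2([0,T];D(A))$ a.s. The main obstacle I anticipate is the bookkeeping in the energy identity — verifying that the nonlinear term genuinely vanishes when tested against $(I+\alpha^2A)u_n$ (this uses $\la\widetilde B_n(u_n,w),u_n\ra=0$ with $w=u_n+\alpha^2Au_n$, which is the projected version of Proposition \ref{prop.1.1}(ii), and the fact that $P_n$ commutes with $A$ and $(I+\alpha^2A)^{-1}$ so that $\la(I+\alpha^2A)^{-1}\widetilde B_n,(I+\alpha^2A)u_n\ra$ really does reduce to $\la\widetilde B_n,u_n\ra$), and correctly tracking the half-powers of $(I+\alpha^2A)$ in the It\^o correction so that the constant ends up being exactly $\Tr[Q^*(I+\alpha^2A)Q]$ and depends only on $k,Q,\qspace$ and not on $n$ or $\alpha$.
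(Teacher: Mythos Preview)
Your overall strategy is the same as the paper's: apply It\^o's formula to $E_n^k$ with $E_n=\la(I+\alpha^2A)u_n,u_n\ra$, use Proposition~\ref{prop.1.1}(ii) to kill the nonlinear term, bound the It\^o correction and quadratic-variation terms by $c\,E_n^{k-1}$ via \eqref{eq.QAbounded}, localize with a stopping time, and take expectations. All of that is correct, including the verification that $\la\widetilde B_n(u_n,\cdot),u_n\ra=0$ survives the projection.

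The gap is in the closing step, where you must control $c\int_0^t E_n^{k-1}\,\dd s$ to get the stated bound with growth \emph{linear} in $t$. You wrote down Young's inequality in the form $E_n^{k-1}\le \varepsilon E_n^k+C_\varepsilon$ and then dismissed it as ``the wrong direction''---but this is exactly the right direction and is precisely what the paper uses. Combined with the spectral gap $E_n\le c_2\bigl(|\nabla u_n|_2^2+\alpha^2|Au_n|_2^2\bigr)$ (which you do identify), it gives
\[
  c\,E_n^{k-1}\;\le\; c\,\varepsilon\,E_n^k+c\,C_\varepsilon
  \;\le\; c\,\varepsilon\,c_2\,E_n^{k-1}\bigl(|\nabla u_n|_2^2+\alpha^2|Au_n|_2^2\bigr)+c\,C_\varepsilon,
\]
and choosing $\varepsilon$ so that $c\,\varepsilon\,c_2=k\nu$ absorbs the first term into the dissipation (this is why the statement carries $k\nu$ rather than $2k\nu$), leaving only the constant $c\,C_\varepsilon$, hence the $+ct$ after integration. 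Your proposed alternatives do not work: the ``direct absorption'' line $c\,E_n^{k-1}\le k\nu\lambda_1\,E_n^{k-1}\lambda_1^{-1}$ just asserts $c\le k\nu$, which is false in general; and induction on $k$ feeds $\E[E_n^{k-1}(s)]\le E_n(0)^{k-1}+c_{k-1}s$ into $\int_0^t(\cdots)\dd s$, producing $O(t^2)$ (and ultimately $O(t^k)$) growth rather than the claimed $O(t)$. So keep your setup, but reinstate the Young step you discarded.
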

%
%
\begin{proof}
Set 
\begin{equation}  \label{eq.defF}
   \mathcal{F}(t)=\mathcal{F}(t,u_n)=  |u_n|_2^2 +\alpha^2 |\nabla u_n|_2^2
\end{equation}
For any $N>0$, $n\in \N^*$ we consider the stopping time
\begin{equation} \label{eq.stopping}
   \tau_N^n=\begin{cases}  
              \inf\{t:\mathcal{F}(t,u_n(t))>N\} &\text{ if the set is non empty }\\
              +\infty &\text{ otherwise.}
            \end{cases}
\end{equation}
Notice that \eqref{eq.approx} is a system of ordinary differential equations with polynomial nonlinearities.
Then, there exists a local solution $u_n$ up to a blow up time $\tau(\omega)$. 
Since the function $u_n(t\wedge \tau_N^n)$ is bounded by $N$, we can apply the  Itô formula  to obtain 
\begin{eqnarray*}
  \mathcal{F}^k(t \wedge \tau_N^n) &+& 2k\nu\int_0^{t \wedge \tau_N^n}\mathcal{F}^{k-1} \times  \left(|\nabla u_n|_2^2       +\alpha^2|A  u_n|_2^2  \right) \dd s \\
     &=& \mathcal{F}^k(0)+ k\int_0^{t \wedge \tau_N^n}\mathcal{F}^{k-1} \Tr[(P_n Q)^*(I+\alpha^2A)(P_n Q)]\dd s
             \\
    && + 2k(k-1) \int_0^{t \wedge \tau_N^n}\mathcal{F}^{k-2}  \left( |(P_n Q)^*(I+\alpha^2A)u_n|_2^2 \right)\dd s\\
     &&  + 2k\int_0^{t \wedge \tau_N^n}\mathcal{F}^{k-1}\la u_n+\alpha^2Au_n,(P_n Q)\dd W(s)\ra 
          \\
     &=&  \mathcal{F}^k(0)+I_1+I_2+M_t
\end{eqnarray*}
where $M_t$ is the martingale term.
  Taking into account \eqref{eq.QAbounded} there  exists  $c_1>0$, independent by $u_n$ and $T$, such  that
\[
   I_1+I_2\leq c_1\int_0^{t \wedge \tau_N^n}\left( \mathcal{F}^{k-1}+\mathcal{F}^{k-2}(|u_n|_2^2+\alpha^2|\nabla u_n|_2^2)\right)\dds \leq 2c_1\int_0^{t \wedge \tau_N^n} \mathcal{F}^{k-1}\dds.
\]

By Poincaré inequality $|x|_2\leq \Cp |\nabla x|_2$ and by the definition of the operator $A$, $|\nabla x|_2\leq  \frac{1}{\sqrt{\lambda_1}}|Ax|_2$.
Then, $\mathcal{F}^{k}\leq c_2\mathcal{F}^{k-1}(|\nabla x|_2^2+\alpha^2|Ax|_2^2)$, where $c_2>0$ depends only on $\qspace$.
By Young's inequality there exists $c_3>0$, depending only by $Q$, $\qspace$, $k$, such that  
$\mathcal{F}^{k-1}\leq k\nu\mathcal{F}^{k}/(2c_1c_2)+c_3/(2c_1)$. Then
 \begin{eqnarray*}
        2c_1\int_0^{t \wedge \tau_N^n} \mathcal{F}^{k-1}\dds 
   &\leq&   \int_0^{t \wedge \tau_N^n}\left(\frac{k\nu}{c_2} \mathcal{F}^{k}+c_3\right)\dd s\\
   &\leq& k\nu\int_0^{t \wedge \tau_N^n}\mathcal{F}^{k-1}(|\nabla u_n|_2^2+\alpha^2|A u_n|_2^2)\dd s+c_3t \wedge \tau_N^n.
 \end{eqnarray*}
We get 
 \begin{multline}\label{eq.boundsF}
    \mathcal{F}^k(t \wedge \tau_N^n) + k\nu\int_0^{t \wedge \tau_N^n}\mathcal{F}^{k-1} \times  \left(|\nabla u_n|_2^2       +\alpha^2|A  u_n|_2^2  \right) \dd s \\
    \leq \mathcal{F}^k(0) 
					   +c_3t \wedge \tau_N^n+
 2k\int_0^{t \wedge \tau_N^n}\mathcal{F}^{k-1}\la u_n+\alpha^2Au_n,(P_n Q)\dd W(s)\ra.
  \end{multline}
Before  taking expectation, we need to verify that the martingale term is integrable.
Arguing as before, 
there exists $c_4,c_5>0$ such that
\[
 \mathcal{F}^{k-1}|(P_n Q)^*(I+\alpha^2A) u_n|_2\leq  c_4\mathcal{F}^{k-1}(|u_n|_2+\alpha^2|\nabla u_n|_2)\leq c_5(1+\mathcal{F}^{k}).
\]
Then, since $\mathcal{F}^k(t\wedge \tau_n)\leq N^k$, we can take expectation  to obtain
\[ 
    2k\E\left[\int_0^{t \wedge \tau_N^n}\mathcal{F}^{k-1}\la u_n+\alpha^2Au_n,(P_n Q)\dd W(s)\ra\right]=0.
 \]
Finally, by taking expectation in \eqref{eq.boundsF} we get 
\begin{multline*} 
  \E[\mathcal{F}^k(t \wedge \tau_N^n)] +k\nu\E\left[\int_0^{t \wedge \tau_N^n}\mathcal{F}^{k-1} \times  \left(|\nabla u_n|_2^2       +\alpha^2|A  u_n|_2^2 
     \right) \dd s \right]
   \leq \E\left[\mathcal{F}^k(0)    \right] 
   +c_3\E\left[t \wedge \tau_N^n\right].                           
\end{multline*}
Letting $N\to\infty$ we conclude the proof.
\end{proof}

We can remark that the constant $c$ appearing in Theorem \ref{thm.moments} actually depends on $k$,  $\qspace$ (since we need a Poincaré inequality) and $\Tr[Q^*(I+\alpha^2A)Q]$. 

\begin{Coro} \label{coro.thight}
Let $u_0\in \sV$ and assume that Hypothesis \ref{hyp.trace} holds.
 Then 
  for any $n\in \N^*$
 \begin{multline*}
  \E\left[ |u_n(t)|_2^2 +\alpha^2 |\nabla u_n(t)|_2^2 \right]+2\nu\E\left[\int_0^{t}\left(|\nabla u_n|_2^2       +\alpha^2|A  u_n|_2^2 \right) \dd s\right]\\
     \leq \E\left[|u_0|_2^2 +\alpha^2 |\nabla u_0|_2^2\right]+\Tr[ Q^*(I+\alpha^2A)Q]t.
 \end{multline*}
\end{Coro}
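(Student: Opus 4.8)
The plan is to specialize the computation carried out in the proof of Theorem~\ref{thm.moments} to the case $k=1$, where two simplifications occur: the term $I_2$, which carries the factor $2k(k-1)$, vanishes, and $I_1$ reduces to exactly $\Tr[(P_nQ)^*(I+\alpha^2A)(P_nQ)](t\wedge\tau_N^n)$. Consequently neither the Poincaré nor the Young inequality is needed, the full factor $2\nu$ in front of the dissipation is retained, and the constant appearing on the right-hand side is the explicit trace $\Tr[Q^*(I+\alpha^2A)Q]$ rather than an unspecified $c$.

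Concretely, I would keep $\mathcal{F}(t)=|u_n(t)|_2^2+\alpha^2|\nabla u_n(t)|_2^2=\la(I+\alpha^2A)u_n(t),u_n(t)\ra$ (so that $\mathcal{F}(0)=|P_nu_0|_2^2+\alpha^2|\nabla P_nu_0|_2^2\le|u_0|_2^2+\alpha^2|\nabla u_0|_2^2$, since $P_n$ does not increase the $\sV$-norm) together with the stopping times $\tau_N^n=\inf\{t:\mathcal{F}(t)>N\}$ (with $\tau_N^n=+\infty$ when the set is empty). On $[0,t\wedge\tau_N^n]$ the process $u_n$ takes values in the finite-dimensional space $P_n\sH$ and is bounded there, so Itô's formula applies to $t\mapsto\mathcal{F}(t\wedge\tau_N^n)$; the Hessian of $x\mapsto\la(I+\alpha^2A)x,x\ra$ being $2(I+\alpha^2A)$, the Itô correction is $\Tr[(P_nQ)^*(I+\alpha^2A)(P_nQ)]\,\dds$, the viscous term contributes $-2\nu(|\nabla u_n|_2^2+\alpha^2|Au_n|_2^2)$, and the nonlinear term $\la(I+\alpha^2A)^{-1}\widetilde{B}_n(u_n,u_n+\alpha^2Au_n),(I+\alpha^2A)u_n\ra=\la\widetilde{B}(u_n,u_n+\alpha^2Au_n),u_n\ra$ vanishes by point (ii) of Proposition~\ref{prop.1.1} (using $u_n=P_nu_n$). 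This yields
\[
\mathcal{F}(t\wedge\tau_N^n)+2\nu\int_0^{t\wedge\tau_N^n}\!\left(|\nabla u_n|_2^2+\alpha^2|Au_n|_2^2\right)\dds=\mathcal{F}(0)+\Tr[(P_nQ)^*(I+\alpha^2A)(P_nQ)](t\wedge\tau_N^n)+M_{t\wedge\tau_N^n},
\]
with $M_t=2\int_0^t\la u_n+\alpha^2Au_n,(P_nQ)\dd W(s)\ra$. Since the integrand of $M$ is bounded on $[0,t\wedge\tau_N^n]$, its expectation vanishes; using $\Tr[(P_nQ)^*(I+\alpha^2A)(P_nQ)]\le\Tr[Q^*(I+\alpha^2A)Q]$ (because $P_n$ is an orthogonal projection) and $t\wedge\tau_N^n\le t$, I would obtain the stated inequality with $t\wedge\tau_N^n$ in place of $t$ on the left. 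Finally I would let $N\to\infty$: by Theorem~\ref{thm.moments} the approximate solution is global, so $\tau_N^n\uparrow\infty$ almost surely, and Fatou's lemma applied to $\mathcal{F}(t\wedge\tau_N^n)\to\mathcal{F}(t)$, together with monotone convergence for the time integral, gives the claim.

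The only points requiring care are bookkeeping ones: checking that $M$ is a genuine martingale up to $\tau_N^n$ (immediate from the boundedness of $u_n$ on that interval, all norms on $P_n\sH$ being equivalent) and justifying the passage $N\to\infty$, which rests on the global existence already established in Theorem~\ref{thm.moments}. I do not expect any real obstacle beyond these; the point is simply that for $k=1$ the Itô correction is exactly the trace term, so no absorption of lower-order contributions into the dissipation is needed.
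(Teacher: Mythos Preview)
Your proposal is correct and follows exactly the approach indicated in the paper: the paper's own proof simply remarks that the argument of Theorem~\ref{thm.moments} with $k=1$ gives the result immediately, which is precisely the specialization you carry out (with $I_2=0$ and $I_1$ equal to the trace term, so no absorption step is needed). Your write-up merely makes explicit the bookkeeping that the paper leaves implicit.
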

\begin{proof}
 The proof is the same as done for Theorem \ref{thm.moments}. Indeed, in the case $k=1$, the Itô formula gives immediately the result.
\end{proof}
\begin{Th}   \label{thm.momentsup}
 Let $u_0\in \sV$ and assume that Hypothesis \ref{hyp.trace} holds.
 Then for any $k\in \N$ there exists $c=c(k,\qspace,Q)>0$ such that for any $T>0$
 \[
  \E\left[\sup_{t\in[0,T]}\left( |u_n|_2^2 +\alpha^2 |\nabla u_n|_2^2\right)^k \right] \leq c\E\left[\left( |u_0|_2^2 +\alpha^2 |\nabla u_0|_2^2\right)^k\right]+cT.
 \]
\end{Th}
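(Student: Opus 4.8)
The plan is to upgrade the moment bounds of Theorem \ref{thm.moments} (which control $\E[\mathcal{F}^k(t)]$ pointwise in $t$, together with the integrated dissipation term) into a bound on the supremum in time, at the cost of a larger constant. The natural device is a Burkholder--Davis--Gundy estimate applied to the martingale term $M_t$ appearing in the Itô expansion of $\mathcal{F}^k$. First I would return to the identity \eqref{eq.boundsF}, which after the Young-inequality manipulation reads, for the localized process,
\[
  \mathcal{F}^k(t\wedge\tM) + k\nu\int_0^{t\wedge\tM}\mathcal{F}^{k-1}\left(|\nabla u_n|_2^2 + \alpha^2|Au_n|_2^2\right)\dds \leq \mathcal{F}^k(0) + c_3(t\wedge\tM) + M_{t\wedge\tM},
\]
where $M_t = 2k\int_0^t\mathcal{F}^{k-1}\la u_n+\alpha^2Au_n,(P_nQ)\dd W(s)\ra$. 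Taking the supremum over $t\in[0,T]$ on the left and then expectation, I would bound $\E[\sup_{t\le T}|M_{t\wedge\tM}|]$ by BDG: its quadratic variation is controlled, via \eqref{eq.QAbounded} exactly as in the proof of Theorem \ref{thm.moments}, by $c\int_0^{\cdot}\mathcal{F}^{2k-2}\cdot\mathcal{F}\,\dds = c\int_0^{\cdot}\mathcal{F}^{2k-1}\dds$, so that
\[
  \E\left[\sup_{t\le T}|M_{t\wedge\tM}|\right] \leq c\,\E\left[\left(\int_0^{T\wedge\tM}\mathcal{F}^{2k-1}\dds\right)^{1/2}\right] \leq c\,\E\left[\left(\sup_{t\le T\wedge\tM}\mathcal{F}^k\right)^{1/2}\left(\int_0^{T\wedge\tM}\mathcal{F}^{k-1}\dds\right)^{1/2}\right].
\]

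The key step is then the standard ``$\sqrt{ab}\le \varepsilon a + \varepsilon^{-1}b$'' splitting: the factor $(\sup \mathcal{F}^k)^{1/2}$ is absorbed into the left-hand side with a small coefficient $\tfrac12$, leaving $\tfrac12\E[\sup_{t\le T\wedge\tM}\mathcal{F}^k]$, while the remaining factor $\E[\int_0^{T\wedge\tM}\mathcal{F}^{k-1}\dds]$ must be controlled. Here I would use the Poincaré / eigenvalue inequalities already invoked in the proof of Theorem \ref{thm.moments} (namely $\mathcal{F}^{k-1}\le c\,\mathcal{F}^{k-1}(|\nabla u_n|_2^2+\alpha^2|Au_n|_2^2)/\lambda_1$ up to an additive constant, equivalently $\mathcal{F}^{k-1}\le c\,\mathcal{F}^{k-1}(\cdots)+c$) to dominate $\int_0^{T}\mathcal{F}^{k-1}\dds$ by the dissipation integral plus $cT$; that dissipation integral is, by Theorem \ref{thm.moments}, bounded in expectation by $c\,\E[\mathcal{F}^k(0)]+cT$. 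Collecting terms gives
\[
  \tfrac12\E\left[\sup_{t\le T\wedge\tM}\mathcal{F}^k\right] \leq \E[\mathcal{F}^k(0)] + c_3 T + c\left(\E[\mathcal{F}^k(0)]+cT\right),
\]
and finally letting $N\to\infty$ (so $\tM\uparrow\infty$ a.s., by the global existence already established in Theorem \ref{thm.moments}) and using monotone convergence yields the claimed bound, with $\mathcal{F}^k(0)=\left(|u_0|_2^2+\alpha^2|\nabla u_0|_2^2\right)^k$.

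The main obstacle — really the only delicate point — is making the absorption rigorous: one must keep the stopping time $\tM$ in place throughout so that every quantity is genuinely integrable (in particular $\sup_{t\le T\wedge\tM}\mathcal{F}^k\le N^k<\infty$ justifies subtracting $\tfrac12\E[\sup\mathcal{F}^k]$ from both sides), and only pass to the limit $N\to\infty$ at the very end, with constants independent of $N$. One should also double-check the exponent bookkeeping in the BDG step, since the integrand of the quadratic variation carries $\mathcal{F}^{2(k-1)}$ times $|u_n+\alpha^2Au_n|^2$, and the latter is $|u_n|_2^2+\alpha^2\cdot 2\la u_n,Au_n\ra+\alpha^4|Au_n|_2^2$, which after $(I+\alpha^2A)$-weighting and \eqref{eq.QAbounded} is bounded by $\Tr[Q^*(I+\alpha^2A)Q]\,\mathcal{F}$ up to constants — matching exactly the $I_2$-type estimate already used. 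No new ideas beyond Theorem \ref{thm.moments} and BDG are needed.
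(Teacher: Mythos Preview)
Your proof is correct and follows the same overall strategy as the paper --- start from \eqref{eq.boundsF}, take the supremum in $t$, and control the martingale term via Burkholder--Davis--Gundy. The one tactical difference lies in how the BDG term is closed. You factor $\mathcal{F}^{2k-1}=\mathcal{F}^k\cdot\mathcal{F}^{k-1}$, pull out $(\sup\mathcal{F}^k)^{1/2}$, and absorb it into the left-hand side (hence the need to keep the stopping time $\tM$ throughout so that the subtraction is legitimate); the leftover $\E\bigl[\int\mathcal{F}^{k-1}\dds\bigr]$ is then controlled via Poincar\'e and the dissipation bound from Theorem~\ref{thm.moments} at level $k$. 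The paper takes a slightly shorter path: since Theorem~\ref{thm.moments} already gives \emph{all} moments of $\mathcal{F}$, it dispenses with the stopping time, bounds the BDG term by $c\,\E\bigl[\int_0^T\mathcal{F}^{2k}\dds\bigr]^{1/2}$, and invokes Theorem~\ref{thm.moments} with exponent $2k$ (plus Young's inequality) to obtain $c(\E[\mathcal{F}^k(0)]+T)$ directly --- no absorption step needed. Your route is a bit longer but more self-contained (it never appeals to moments of order higher than $k$); the paper's is shorter but exploits the availability of the $2k$-th moment.
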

\begin{proof}
As done for the previous Theorem, let us set $\mathcal{F}$ as in \eqref{eq.defF}.
By Theorem \ref{thm.moments} the solution $u_n$ is global and all moments of $\mathcal{F}$ have finite expectation.

Using Itô formula and arguing as for the proof of Theorem \ref{thm.moments}
we get (see formula \eqref{eq.boundsF}) 
 \begin{multline*} 
    \E\left[\sup_{t\in[0,T]}\mathcal{F}^k(t ) + k\nu\int_0^t\mathcal{F}^{k-1} \times  \left(|\nabla u_n|_2^2       +\alpha^2|A  u_n|_2^2  \right) \dd s\right] \\
    \leq \E\left[\mathcal{F}^k(0)\right] 
					   +c_1T +
 2k \E\left[\sup_{t\in[0,T]}\int_0^{t}\mathcal{F}^{k-1}\la u_n+\alpha^2Au_n,(P_n Q)\dd W(s)\ra\right],
  \end{multline*}
where $c_1>0$ depends by $k,Q,\qspace$.
For the martingale part, we can use Burkholder-Davis-Gundy inequality and \eqref{eq.QAbounded} to get  for some constants $c_2,c_3>0$, depending only on $k, Q$
\begin{eqnarray*}
  \E\left[\sup_{t\in[0,T]}\left|\int_0^{t}\mathcal{F}^{k-1}\la u_n+\alpha^2Au_n,(P_n Q)\dd W(s)\ra\right|\right] 
  &\leq& c_2\E\left[\int_0^{T} \mathcal{F}^{2(k-1)}\left|(P_n Q)^*(I+\alpha^2A)u_n\right|_2^2\dd s \right]^\frac{1}{2} \\
  &\leq&  c_3\E\left[\int_0^{T} \mathcal{F}^{2k}\dd s \right]^\frac{1}{2} <\infty
\end{eqnarray*}
Using Theorem \ref{thm.moments} and Young's inequality, we deduce that the last term is bounded by $c(\E\left[\mathcal{F}^k(0)\right]+T)$, where $c=c(k,Q,\qspace)$ is a positive constant.
This completes the proof.
\end{proof}

\section{Exponential moments}\label{sec:expo} 
The following result will be used to derive some concentration properties for the invariant measure.
\begin{Prop} \label{prop.momexp}
 Let $u_0\in \sV$ and assume that Hypothesis \ref{hyp.trace} holds.
 For any $\varepsilon>0$ such that $-\nu+2\lambda_1^{-1}\varepsilon\Tr[Q^*(I+\alpha^2A)Q]< 0$ there exists $ K_\varepsilon>0$, independent by $u_0$,  such that
 for any $n\in\N$
 \begin{multline*}
  \E\left[e^{\varepsilon\left(|u_n(t)|_2^2+\alpha^2|\nabla u_n(t)|_2^2\right)} \right]\\
  +\varepsilon \left(\nu-2\lambda_1^{-1}\varepsilon \Tr[Q^*(I+\alpha^2A)Q]\right)
  \int_0^{t}\E\left[e^{\varepsilon\left(|u_n(s)|_2^2+\alpha^2|\nabla u_n(s)|_2^2\right)}
  (|\nabla u_n(s)|_2^2+\alpha^2|Au_n(s)|_2^2) \right]\dd s\\
  \leq \E\left[e^{\varepsilon(|u_0|_2^2+\alpha^2|\nabla u_0|_2^2)}\right] +K_\varepsilon t.
 \end{multline*}
\end{Prop}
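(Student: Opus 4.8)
The plan is to apply the Itô formula to the functional $e^{\varepsilon \mathcal{F}(t)}$ where $\mathcal{F}(t) = |u_n(t)|_2^2 + \alpha^2|\nabla u_n(t)|_2^2$ as in \eqref{eq.defF}, exploiting the same cancellation of the nonlinear term that makes Theorem \ref{thm.moments} work. Recall that when one computes $d\mathcal{F}(t)$ along a solution of \eqref{eq.approx}, the contribution of $(I+\alpha^2A)^{-1}\widetilde{B}_n(u_n, u_n+\alpha^2Au_n)$ pairs against $u_n + \alpha^2 Au_n$ to give $\la \widetilde{B}_n(u_n,u_n+\alpha^2 Au_n), u_n\ra$, which vanishes by point (ii) of Proposition \ref{prop.1.1}; the viscous term gives $-2\nu(|\nabla u_n|_2^2 + \alpha^2|Au_n|_2^2)$; and the noise contributes the drift $\Tr[(P_nQ)^*(I+\alpha^2A)(P_nQ)]$ together with the martingale $2\la u_n + \alpha^2 Au_n, (P_nQ)\,dW\ra$. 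So, writing $\Phi(r) = e^{\varepsilon r}$, the chain rule for Itô processes gives
\begin{multline*}
  d\, e^{\varepsilon \mathcal{F}(t)} = \varepsilon e^{\varepsilon \mathcal{F}(t)}\Big( -2\nu(|\nabla u_n|_2^2+\alpha^2|Au_n|_2^2) + \Tr[(P_nQ)^*(I+\alpha^2A)(P_nQ)]\Big)\ddt\\
   + 2\varepsilon^2 e^{\varepsilon\mathcal{F}(t)}|(P_nQ)^*(I+\alpha^2A)u_n|_2^2\ddt + 2\varepsilon e^{\varepsilon\mathcal{F}(t)}\la u_n+\alpha^2Au_n,(P_nQ)\,dW(t)\ra.
\end{multline*}

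Next I would control the two ``bad'' drift terms. The trace term is bounded by the constant $\Tr[Q^*(I+\alpha^2A)Q]$ uniformly in $n$. For the second-order term, formula \eqref{eq.QAbounded} in Remark \ref{rem.QAbounded} gives $|(P_nQ)^*(I+\alpha^2A)u_n|_2^2 \leq \Tr[Q^*(I+\alpha^2A)Q](|u_n|_2^2+\alpha^2|\nabla u_n|_2^2)$, and then the Poincaré inequality $|u_n|_2 \leq \lambda_1^{-1/2}|\nabla u_n|_2$ together with $|\nabla u_n|_2 \leq \lambda_1^{-1/2}|Au_n|_2$ lets me bound $|u_n|_2^2 + \alpha^2|\nabla u_n|_2^2 \leq \lambda_1^{-1}(|\nabla u_n|_2^2 + \alpha^2|Au_n|_2^2)$. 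Hence the second-order term is dominated by $2\varepsilon^2 \lambda_1^{-1}\Tr[Q^*(I+\alpha^2A)Q]\, e^{\varepsilon\mathcal{F}}(|\nabla u_n|_2^2+\alpha^2|Au_n|_2^2)$, which combines with the $-2\nu\varepsilon e^{\varepsilon\mathcal{F}}(|\nabla u_n|_2^2+\alpha^2|Au_n|_2^2)$ term to produce the coefficient $-\varepsilon(\nu - 2\lambda_1^{-1}\varepsilon\Tr[Q^*(I+\alpha^2A)Q])$ appearing in the statement — wait, one factor of $\nu$ gets kept on the left and the rest absorbs the positive term; the arithmetic is: $-2\nu\varepsilon + 2\varepsilon^2\lambda_1^{-1}\Tr = -\nu\varepsilon - \varepsilon(\nu - 2\varepsilon\lambda_1^{-1}\Tr)$, so one keeps $-\nu\varepsilon e^{\varepsilon\mathcal{F}}(|\nabla u_n|_2^2 + \alpha^2|Au_n|_2^2) \leq 0$ to discard, and moves $\varepsilon(\nu - 2\varepsilon\lambda_1^{-1}\Tr)\int_0^t e^{\varepsilon\mathcal{F}}(|\nabla u_n|_2^2+\alpha^2|Au_n|_2^2)\dds$ to the left-hand side.

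There remains one genuinely new difficulty compared to Theorem \ref{thm.moments}: after integrating and before taking expectations I must know that the martingale term has zero expectation and, more importantly, that $\E[e^{\varepsilon\mathcal{F}(t)}]$ is even finite. The clean way is to introduce the stopping times $\tau_N^n$ of \eqref{eq.stopping}; on $[0,t\wedge\tau_N^n]$ everything is bounded, the martingale has zero expectation, and one obtains
\[
  \E[e^{\varepsilon\mathcal{F}(t\wedge\tau_N^n)}] + \varepsilon(\nu - 2\varepsilon\lambda_1^{-1}\Tr[Q^*(I+\alpha^2A)Q])\E\!\left[\int_0^{t\wedge\tau_N^n}\!\! e^{\varepsilon\mathcal{F}(s)}(|\nabla u_n|_2^2+\alpha^2|Au_n|_2^2)\dds\right] \leq \E[e^{\varepsilon\mathcal{F}(0)}] + \Tr[Q^*(I+\alpha^2A)Q]\,t,
\]
so $K_\varepsilon = \Tr[Q^*(I+\alpha^2A)Q]$ (the constant is in fact independent of $\varepsilon$). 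Since $u_n$ is already globally defined with all polynomial moments finite (Theorem \ref{thm.moments}), $\tau_N^n \uparrow \infty$ a.s. as $N\to\infty$; Fatou's lemma on the first term, the monotone convergence theorem on the integral term, and the assumption $\E[e^{\varepsilon\mathcal{F}(0)}]<\infty$ (which holds here since $u_0 \in \sV$ is deterministic, or at least the bound is stated conditionally on that expectation) give the claimed inequality with $K_\varepsilon$ independent of $u_0$ and $n$. The main obstacle, then, is not any single estimate but making the localization-and-limit argument airtight: one needs the a priori global existence and moment bounds from Theorem \ref{thm.moments} to justify passing $N\to\infty$, and one must be slightly careful that the uniform-in-$n$ bound on $\Tr[(P_nQ)^*(I+\alpha^2A)(P_nQ)] \leq \Tr[Q^*(I+\alpha^2A)Q]$ is what makes $K_\varepsilon$ independent of $n$.
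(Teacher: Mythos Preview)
Your overall strategy --- It\^o's formula applied to $e^{\varepsilon\mathcal{F}}$, the cancellation of the nonlinear term via Proposition~\ref{prop.1.1}(ii), the localization with $\tau_N^n$, and the passage $N\to\infty$ --- matches the paper's proof. There is, however, a genuine gap in how you handle the trace drift term.

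After It\^o's formula, the drift contains $\varepsilon\, e^{\varepsilon\mathcal{F}(s)}\Tr[(P_nQ)^*(I+\alpha^2A)(P_nQ)]$. You write that ``the trace term is bounded by the constant $\Tr[Q^*(I+\alpha^2A)Q]$'' and then record the final inequality with $\Tr[Q^*(I+\alpha^2A)Q]\,t$ on the right, concluding that $K_\varepsilon$ is independent of $\varepsilon$. This is not correct: the full term is $\varepsilon\,C_Q\,e^{\varepsilon\mathcal{F}(s)}$ with $C_Q=\Tr[Q^*(I+\alpha^2A)Q]$, and the factor $e^{\varepsilon\mathcal{F}(s)}\geq 1$ is unbounded along the trajectory. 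Integrating it over $[0,t]$ does not yield a bound of the form $K_\varepsilon t$ for any fixed constant.

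The remedy is precisely the piece of dissipation you discarded. Instead of throwing away $-\nu\varepsilon\, e^{\varepsilon\mathcal{F}}G$ (with $G=|\nabla u_n|_2^2+\alpha^2|Au_n|_2^2$), use it together with $G\geq\lambda_1\mathcal{F}$ to absorb the trace term: setting $y=\varepsilon\mathcal{F}$ one is left with an expression of the form $(-ay+b)e^{y}$ with $a=\lambda_1(\nu-2\lambda_1^{-1}\varepsilon C_Q)>0$ and $b=\varepsilon C_Q$, and the elementary bound $\sup_{y\geq0}(-ay+b)e^{y}<\infty$ furnishes the constant $K_\varepsilon$. This is exactly what the paper does (it phrases the step as $(-ax+b)e^{x}\leq -\frac{a}{2}xe^{x}+c$). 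Note that the resulting $K_\varepsilon$ \emph{does} depend on $\varepsilon$, contrary to your parenthetical claim.
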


\begin{proof}
 Let $f_\varepsilon(x)=e^{\varepsilon(|x|_2^2+\alpha^2|\nabla x|_2^2)}$ and set 
 $\tau_n^N$ as in \eqref{eq.stopping}. 
 In the next calculus we shall use the notation $C_Q=\Tr[Q^*(I+\alpha^2A)Q]$. 
 By Itô formula and \eqref{eq.QAbounded} we have
 \begin{multline} \label{eq.proofmomexp}
  f_\varepsilon(u_n(t\wedge \tau_n^N))  
  c= f_\varepsilon(x)+2\varepsilon\int_0^{t\wedge \tau_n^N} f_\varepsilon(u_n)\la (I+\alpha^2A)u_n,(P_nQ)\dd W(s)\ra\\
  +2\varepsilon \int_0^{t\wedge \tau_n^N}f_\varepsilon(u_n)\left(-\nu(|\nabla u_n|_2^2+\alpha^2|Au_n|_2^2) 
 +2\varepsilon|Q^*(I+\alpha^2A)u_n|_2^2 +\frac12C_Q\right)\dd s\\
  \leq f_\varepsilon(x)+2\varepsilon \int_0^{t\wedge \tau_n^N}f_\varepsilon(u_n)\la (I+\alpha^2A)u_n,(P_nQ)\dd W(s)\ra
 \\
   +2\varepsilon \int_0^{t\wedge \tau_n^N}f_\varepsilon(u_n)\left(-\nu(|\nabla u_n|_2^2+\alpha^2|Au_n|_2^2) 
    +2\varepsilon C_Q(|u_n|_2^2+\alpha^2|\nabla u_n|_2^2) + \frac12 C_Q\right)\dd s 
\end{multline}
Taking into account that since $u_n\in \sV$ we have $\sqrt{\lambda_1}|u_n|_2\leq |\nabla u_n|_2$ and  $\sqrt{\lambda_1}|\nabla u_n|_2\leq |A u_n|_2$ the last term is bounded by
\begin{eqnarray*}
  f_\varepsilon(u_n(t\wedge \tau_n^N)) 
      & \leq & f_\varepsilon(u_0)+2\varepsilon\int_0^{t\wedge \tau_n^N} f_\varepsilon(u_n)\la (I+\alpha^2A)u_n,(P_nQ)\dd W(s)\ra\\
  &&+ 2\varepsilon \int_0^{t\wedge \tau_n^N} f_\varepsilon(u_n)\left(\left(-\nu+2\lambda_1^{-1}\varepsilon C_Q \right)(|\nabla u_n|_2^2+\alpha^2|A u_n|_2^2) 
     + \frac12 C_Q\right)\dd s
\end{eqnarray*}
Taking into account the obvious inequality $(-ax+b)e^x\leq -\frac{a}{2}xe^x+c$ for $x\geq0$ and some $c>0$ we deduce that there exists $K_\varepsilon>0$, independent by $u_n$, such that
\begin{eqnarray*}
  f_\varepsilon(u_n(t\wedge \tau_n^N)) & \leq &   f_\varepsilon(u_0)+2\varepsilon\int_0^{t\wedge \tau_n^N} f_\varepsilon(u_n)\la (I+\alpha^2A)u_n,(P_nQ)\dd W(s)\ra\\
 &&+ \int_0^{t\wedge \tau_n^N} \left( \varepsilon\left(-\nu+2\lambda_1^{-1}\varepsilon C_Q \right)(|\nabla u_n|_2^2+\alpha^2|A u_n|_2^2) f_\varepsilon(u_n)+K_\varepsilon\right)\dd s
\end{eqnarray*}
By taking expectation we obtain
\[
  \E[f_\varepsilon(u_n(t\wedge \tau_n^N))]+   \varepsilon \left(\nu-2\lambda^{-1}\varepsilon C_Q \right)\E\left[\int_0^{t\wedge \tau_n^N} f_\varepsilon(u_n(s))(|\nabla u_n|_2^2+\alpha^2|A u_n|_2^2)\dd s\right]
    \leq \E\left[f_\varepsilon(u_0)\right]+K_\varepsilon t.
\]
Notice that the martingale term is $\Prb$-integrable since by \eqref{eq.QAbounded}
\begin{eqnarray*}
 | f_\varepsilon(u_n) (P_nQ)^*(I+\alpha^2A)u_n|_2^2 &=& f_\varepsilon(\sqrt{2}u_n) |(P_nQ)^*(I+\alpha^2A)u_n|_2^2 \\
      & \leq & \Tr[Q^*(I+\alpha^2A)Q] f_\varepsilon(\sqrt{2}u_n)(|u_n|_2^2+\alpha^2|\nabla u_n|_2^2)
\end{eqnarray*}
and consequently, by the definition of the stopping time, we have
\[
   \sup_{s\in[0,t\wedge \tau_n^N[} \left\{| f_\varepsilon(u_n(s)) (P_nQ)^*(I+\alpha^2A)u_n(s)|_2^2\right\}
      \leq \Tr[Q^*(I+\alpha^2A)Q] e^{2\varepsilon N}N.
\]
Letting $N\to \infty$ we get that $\Prb$-a.s. $\tau_n^N\to \infty$ and  we obtain 
the result.
\end{proof}

%
%

%
%
%
%
%
\section{Estimates in Sobolev spaces}\label{sec:Sobolev}

%
 Let $X$ be a Banach space with norm $\|\cdot\|_X$.
 For $p\geq1$, $\theta\in]0,1[$ we denote by $W^{\theta,p}([0,T];X)$ the classical Sobolev space of all functions $f\in \sL^p([0,T];X)$ such that
 \[
   \int_0^T\int_0^T\frac{\|f(t)-f(s)\|_X^p}{|t-s|^{1+\theta p}}\dd s\dd t <\infty,
 \]
 endowed with the norm
 \[
   \|f\|_{W^{\theta,p}([0,T];X)}=\left(\|f\|_{L^p([0,T];X)}^p+\int_0^T\int_0^T\frac{\|f(t)-f(s)\|_X^p}{|t-s|^{1+\theta p}}\dd s\dd t  \right)^{\frac1p}.
 \]
The proof of the following lemma is left to the reader.
\begin{Le} \label{Le.sobolev}
Let $X$ a  Banach space.
For any $T>0$, $\theta\in]0,1/2[$, $p\geq2$ there exists $c=c(\theta,p,T)$ such that for any $f\in \sL^2([0,T];X)$ it holds 
 \[
   \left\|\int_0^\cdot f(\tau)\dd \tau \right\|_{W^{\theta,p}([0,T];X)}\leq c(\theta,p,T) \|f\|_{L^2([0,T];X)}.
 \]
\end{Le}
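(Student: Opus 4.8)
The plan is to reduce everything to the scalar estimate for the Wiener integral and then invoke a Fubini argument. Write $g(t)=\int_0^t f(\tau)\,\dd\tau$. The claim has two pieces: control of $\|g\|_{L^p([0,T];X)}$ and control of the Gagliardo seminorm $\int_0^T\int_0^T \|g(t)-g(s)\|_X^p |t-s|^{-1-\theta p}\,\dd s\,\dd t$, both by $\|f\|_{L^2([0,T];X)}$.

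For the $L^p$ piece, note that for each $t$, $\|g(t)\|_X \le \int_0^t \|f(\tau)\|_X\,\dd\tau \le \sqrt{t}\,\|f\|_{L^2([0,T];X)}$ by Cauchy--Schwarz, hence $\|g\|_{L^p([0,T];X)}^p \le \int_0^T t^{p/2}\,\dd t \cdot \|f\|_{L^2([0,T];X)}^p = \frac{T^{p/2+1}}{p/2+1}\|f\|_{L^2([0,T];X)}^p$, which gives the desired bound with a constant depending only on $p$ and $T$. For the seminorm piece, for $s<t$ we have $\|g(t)-g(s)\|_X \le \int_s^t \|f(\tau)\|_X\,\dd\tau \le (t-s)^{1/2}\,\bigl(\int_s^t \|f(\tau)\|_X^2\,\dd\tau\bigr)^{1/2} \le (t-s)^{1/2}\,\|f\|_{L^2([0,T];X)}$ again by Cauchy--Schwarz. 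Therefore
\[
  \int_0^T\int_0^T\frac{\|g(t)-g(s)\|_X^p}{|t-s|^{1+\theta p}}\,\dd s\,\dd t
  \le \|f\|_{L^2([0,T];X)}^p\int_0^T\int_0^T |t-s|^{p/2-1-\theta p}\,\dd s\,\dd t,
\]
and the double integral converges precisely because $\theta<1/2$ and $p\ge 2$: the exponent $p/2-1-\theta p = p(1/2-\theta)-1 > -1$, so the singularity along the diagonal is integrable, and the integral equals a finite constant depending only on $\theta,p,T$. Adding the two contributions and taking the $p$-th root yields the stated inequality with $c(\theta,p,T)$ the maximum (up to a factor $2^{1/p}$) of the two constants obtained.

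There is essentially no obstacle here — the only point requiring a moment's care is verifying that the exponent $p/2-1-\theta p$ is strictly greater than $-1$ so that the Gagliardo double integral is finite; this is exactly where the hypothesis $\theta\in]0,1/2[$ (together with $p\ge 2$, though $p>0$ would already suffice for this step) is used. One could alternatively sharpen the constant's dependence, but since the statement only asks for existence of some $c(\theta,p,T)$, the crude Cauchy--Schwarz bounds above are entirely sufficient, and indeed this is why the authors leave the proof to the reader.
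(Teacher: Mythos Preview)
Your proof is correct; the Cauchy--Schwarz estimates for both the $L^p$ part and the Gagliardo seminorm are exactly what one expects, and your check that the exponent $p/2-1-\theta p>-1$ under the hypothesis $\theta<1/2$ is the only nontrivial point. The paper itself omits the proof entirely (``left to the reader''), so there is nothing to compare against --- your argument is the intended one.
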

\begin{Prop}   \label{prop.holderw}
For $u_0\in \sV$, $T>0$, $n\in \N$, let $u_n$ be the solution of \eqref{eq.approx} in $[0,T]$.
  For any $T>0$, $\theta \in ]0,1/2[$,  $p\geq2$ there exists a constant $c=c(T,\theta,p)>0$ such that for any $n\in \N$ 
\[
   \E\left[ \|u_n\|_{W^{\theta,p}([0,T];\sH)}^2\right]
   \leq c\left(1+\frac{1}{\alpha^2}\right)\left(\E\left[|u_0|_2^2+\alpha^2|\nabla u_0|_2^2 \right] +1\right)^2.
\]
\end{Prop}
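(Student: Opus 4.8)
The plan is to use the mild (integral) formulation of the Galerkin equation \eqref{eq.approx} and split $u_n$ into the sum of a deterministic drift integral, a stochastic convolution, and the initial-data term, then estimate the $W^{\theta,p}$-norm of each piece via Lemma \ref{Le.sobolev}. Write
\[
u_n(t) = P_n u_0 - \nu\int_0^t A u_n(s)\,\dd s - \int_0^t (I+\alpha^2 A)^{-1}\widetilde B_n(u_n(s),u_n(s)+\alpha^2 Au_n(s))\,\dd s + \int_0^t P_n Q\,\dd W(s).
\]
The constant term $P_n u_0$ contributes only to the $\sL^p([0,T];\sH)$ part of the norm (the difference $f(t)-f(s)$ vanishes), and is bounded by $T^{1/p}|u_0|_2$. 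For the two Lebesgue integrals I would apply Lemma \ref{Le.sobolev} with $X=\sH$, which reduces the problem to bounding $\E[\|A u_n\|_{\sL^2([0,T];\sH)}^2]$ and $\E[\|(I+\alpha^2 A)^{-1}\widetilde B_n(u_n,u_n+\alpha^2 Au_n)\|_{\sL^2([0,T];\sH)}^2]$.

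First I would handle the linear term: since $|A u_n(s)|_2^2 \le \alpha^{-2}(|\nabla u_n(s)|_2^2 + \alpha^2|Au_n(s)|_2^2)$, Corollary \ref{coro.thight} (the case $k=1$) gives $\E\int_0^T |A u_n|_2^2\,\dd s \le c\,\alpha^{-2}(\E[|u_0|_2^2+\alpha^2|\nabla u_0|_2^2] + T)$, which is of the required form. For the nonlinear term I would use estimate (vi) of Proposition \ref{prop.1.1} (valid also for $\widetilde B_n$), together with the fact that $(I+\alpha^2 A)^{-1}:\sH\to D(A)$ is bounded with norm controlled by $\alpha^{-2}$ (indeed $|A(I+\alpha^2A)^{-1}x|_2 \le \alpha^{-2}|x|_2$): taking $w=(I+\alpha^2A)^{-1}\widetilde B_n(u_n,u_n+\alpha^2 Au_n)$, pairing against $Aw$ and using (vi) with the roles arranged so that $|Aw|_2$ appears linearly, one obtains $|w|_2 \lesssim \alpha^{-2}|u_n|_\sV\,|u_n+\alpha^2 Au_n|_2 \lesssim |u_n|_\sV(|u_n|_2 + \alpha^2|Au_n|_2)$, and then $|w|_2^2 \lesssim (|u_n|_2^2+\alpha^2|\nabla u_n|_2^2)(|\nabla u_n|_2^2+\alpha^2|Au_n|_2^2)\cdot(1+\alpha^{-2})$ after using Poincaré; integrating in time and applying Cauchy--Schwarz in $\omega$ together with Theorem \ref{thm.momentsup} (for $\sup_t \mathcal F^1$, i.e. $k=1$ or $k=2$) and Corollary \ref{coro.thight} yields the bound $c(1+\alpha^{-2})(\E[|u_0|_2^2+\alpha^2|\nabla u_0|_2^2]+1)^2$.

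For the stochastic convolution $\int_0^\cdot P_n Q\,\dd W$ I would invoke the standard factorization/Da Prato--Zabczyk estimate for $W^{\theta,p}$-regularity of stochastic integrals with $\theta<1/2$: since $Q\in\mathcal L_2(K;\sH)$ by Hypothesis \ref{hyp.trace} (in fact $Q\in\mathcal L_2(K;\sV)$, cf. Remark \ref{rem.QAbounded}), and $\|P_n Q\|_{\mathcal L_2(K;\sH)}\le\|Q\|_{\mathcal L_2(K;\sH)}$, one gets $\E\|\int_0^\cdot P_nQ\,\dd W\|_{W^{\theta,p}([0,T];\sH)}^p \le c(T,\theta,p)\,\Tr[Q^*Q]^{p/2}$, hence after raising to the power $2/p$ a bound by a constant independent of $n$ and $u_0$, absorbed into the stated right-hand side. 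I expect the main obstacle to be the nonlinear term: getting the correct $\alpha$-dependence $(1+\alpha^{-2})$ requires carefully choosing which inequality of Proposition \ref{prop.1.1} to apply and how to distribute the $(I+\alpha^2A)^{-1}$ factor so that no uncontrolled power of $A$ on $u_n$ beyond $Au_n$ survives, and then matching the resulting space-time integral to the a priori bounds already established (which only control $\int_0^T\mathcal F^{k-1}(|\nabla u_n|_2^2+\alpha^2|Au_n|_2^2)\,\dd s$ and $\E\sup_t\mathcal F^k$, not $\int_0^T|Au_n|_2^4$). The Cauchy--Schwarz splitting of the time integral into a $\sup_t\mathcal F$ factor and an $\int_0^T(|\nabla u_n|_2^2+\alpha^2|Au_n|_2^2)\,\dd s$ factor is the key move that makes the available estimates sufficient.
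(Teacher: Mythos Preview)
Your decomposition and overall strategy match the paper's proof exactly: split $u_n$ into $P_nu_0$, the linear drift integral, the nonlinear drift integral, and the stochastic convolution; apply Lemma \ref{Le.sobolev} to the two Lebesgue integrals; and treat the noise term by direct Gaussian moment bounds. The way you estimate $(I+\alpha^2A)^{-1}\widetilde B_n$ in $\sH$ by duality against $D(A)$, using Proposition \ref{prop.1.1}(vi) (equivalently (iv)) together with $|A(I+\alpha^2A)^{-1}\xi|_2\le\alpha^{-2}|\xi|_2$, is also the same as in the paper (cf.\ \eqref{eq.ineqB}).

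There is one unnecessary detour that does not quite close as written. After squaring the pointwise bound on the nonlinear term you arrive at a quantity of the form
\[
  \E\int_0^T\bigl(|u_n|_2^2+\alpha^2|\nabla u_n|_2^2\bigr)\bigl(|\nabla u_n|_2^2+\alpha^2|Au_n|_2^2\bigr)\,\dd s.
\]
Your Cauchy--Schwarz-in-$\omega$ step would produce a factor $\bigl(\E[(\int_0^T(|\nabla u_n|_2^2+\alpha^2|Au_n|_2^2)\,\dd s)^2]\bigr)^{1/2}$, but Corollary \ref{coro.thight} only controls the \emph{first} moment of that time integral, not the second. The paper bypasses this by invoking Theorem \ref{thm.moments} with $k=2$, which bounds the displayed integral directly by $c\bigl(\E[(|u_0|_2^2+\alpha^2|\nabla u_0|_2^2)^2]+T\bigr)$; no $\sup_t$ estimate and no Cauchy--Schwarz are needed for this term. (As a side remark, the $\alpha$-power one actually obtains here is $\alpha^{-4}$, not $1+\alpha^{-2}$; the $\alpha^{-2}$ in front of $|u_n|_\sV\,|u_n+\alpha^2Au_n|_2$ does not disappear when you pass to the next line.)
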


\begin{proof}
For any $n\in \N $,  we have
 \begin{eqnarray*}
   u_n(t)    &=&   P_nu_0
  -\nu\int_0^t    Au_n(\tau)\dd \tau
   - \int_0^t (I+\alpha^2A)^{-1}\widetilde{B}_{n}(u_n  ,u_n +\alpha^2 A u_n)(\tau) \dd\tau  
   +(P_n Q) W(t)\\
      &=&  P_nu_0+ J_1(t)+J_2(t)+J_3(t).
 \end{eqnarray*}
We proceed as for Proposition \ref{prop.holderw} by estimating each term.
Clearly, $\E\left[ \|P_n u_0  \|_{W^{\theta,p}([0,T];\sH)}^2\right]\leq T^{\frac2p}|u_0  |_2^2 $. 
For $J_1$ we have, using Lemma \ref{Le.sobolev} and Theorem \ref{thm.moments} (with $k=1$), that there exists $c_1>0$ such that
\[
 \E\left[ \| J_1(\cdot)\|_{W^{\theta,p}([0,T];\sH)}^2\right]
   \leq c(\theta,p)\E\left[\int_0^T|Au_n(\tau)|_2^2\dd \tau\right] 
     \leq \frac{c_1}{\alpha^2} \left(\E\left[|u_0|_2^2+\alpha^2|\nabla u_0|_2^2\right] +T \right)
\]
In order to estimate $J_2$, observe that by {\em (iv)} of Proposition \ref{prop.1.1} and Young inequality, for any $\xi \in \sH$ we have 
\begin{eqnarray*}
  \la  \widetilde{B} (P_nu_n ,P_n(u_n +\alpha^2 A u_n)),P_n(I+\alpha^2A)^{-1}\xi \ra_{(D(A)',D(A))}   &\leq & c  |u_n|_\sV\left(|u_n|_2 +\alpha^2 |A u_n|_2\right) |P_n(I+\alpha^2A)^{-1}\xi|_{D(A)}  \\ 
   &\leq&    \frac{c}{\alpha^2}  |u_n|_\sV\left(|u_n|_2 +\alpha^2 |A u_n|_2\right)|\xi|_2
\end{eqnarray*}
which implies
\begin{equation} \label{eq.ineqB}
  |(I+\alpha^2A)^{-1}\widetilde{B}_{n}(u_n  ,u_n +\alpha^2 A u_n)|_2\leq \frac{c}{\alpha^2}  |u_n|_\sV\left(|u_n|_2 +\alpha^2 |A u_n|_2\right).
\end{equation}
Since $|u_n|_\sV \leq c|\nabla u_n|_2$, by Young inequality $(a+b)^2\leq 2a^2+2b^2$ we get that for some $c>0$, independent by $u_n$, it holds
\[
  |(I+\alpha^2A)^{-1}\widetilde{B}_{n}(u_n  ,u_n +\alpha^2 A u_n)|_2^2
  \leq \frac{c}{\alpha^4}\left(|u_n|_2^2+\alpha^2|\nabla u_n|_2^2\right)\left(|\nabla u_n|_2^2 +\alpha^2 |A u_n|_2^2\right).
\]
By Lemma \ref{Le.sobolev} we deduce that there exists $c>0$ such that 
\begin{eqnarray*}
   \E\left[ \| J_2(\cdot)\|_{W^{\theta,p}([0,T];\sH)}^2\right]  &\leq& 
          c\E\left[\int_0^T|u_n|_\sV^2|(I+\alpha^2A)^{-1}\widetilde{B}_{n}(u_n  ,u_n +\alpha^2 A u_n)|_2^2\dd\tau \right] \\
         &\leq& \frac{c}{\alpha^4}\E\left[\int_0^T\left(|u_n|_2^2+\alpha^2|\nabla u_n|_2^2\right)\left(|\nabla u_n|_2^2 +\alpha^2 |A u_n|_2^2\right)\dd\tau \right] 
\end{eqnarray*}
Taking into account the bound given by Theorem \ref{thm.moments} we obtain that for some $c>0$ it holds
\[
  \E\left[ \| J_2(\cdot)\|_{W^{\theta,p}([0,T];\sH)}^2\right]
  \leq \frac{c}{\alpha^4}\left(\E\left[(|u_0|_2^2+\alpha^2|\nabla u_0|_2^2)^2\right]+T \right).
\]
For the last term, observe that by the gaussianity of $QW(t)$ that there exists $c=c(p)$ such that 
$\E[ |(P_n Q)W(t)|_2^p] \leq c (\Tr[Q^*Q])^{\frac{p}{2}} t^{\frac{p}{2}}$. 
Similarly, $\E[ |(P_n Q)(W(t)-W(s))|_2^p] \leq c (\Tr[Q^*Q])^{\frac{p}{2}} |t-s|^{\frac{p}{2}}$. 
Then, $\E[|J_3(\cdot)|^2_{L^p([0,T];\R)}]\leq c\Tr[Q^*Q]$ where $c>0$ depends only on $p,T$.
Since $2/p\leq1$ by Jensen inequality we get
\begin{eqnarray*} 
  \E\left[\left(\int_0^T\int_0^T\frac{|(P_n Q)(W(t)-W(s))|_2^p}{|t-s|^{1+\theta p}}\dd s\dd t\right)^{\frac{2}{p}}\right]
    &\leq&\left(\int_0^T\int_0^T\frac{\E[|(P_n Q)(W(t)-W(s))|_2^p]}{|t-s|^{1+\theta p}}\dd s\dd t\right)^{\frac{2}{p}}\\
    &\leq& \Tr[Q^*Q] \left(\int_0^T\int_0^T\frac{1}{|t-s|^{1+p(\theta-\frac12)}}\dd s\dd t\right)^{\frac{2}{p}}\\
    &\leq& c\Tr[Q^*Q]
\end{eqnarray*}
provided $\theta<1/2$.
Here, $c>0$ depends by $T,\theta, p$.
Taking into account the estimates on $P_nu_0, J_1,J_2,J_3$ we get the result.
 \end{proof}

%
%
%
%
%
\section{Compactness}\label{sec:compactness}

\begin{Le}[Tightness] \label{le.tightness}
 For $u_0\in \sV$, $T>0$, $n\in \N$, let $u_n$ the solution of \eqref{eq.approx} in $[0,T]$.
 Then, for any $p>2$, $\rho>0$, the laws of $u_n, n\in \N$ are tight in 
 \[
     \mathcal{C}([0,T];D(A^{-\rho}))\cap \sL^p([0,T];\sV).
 \]
 Moreover, the laws of $Au_n, n\in \N$ are tight in the space
 $
   \sL^2([0,T]; \sH)
 $
endowed with the weak topology.
\end{Le}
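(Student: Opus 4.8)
The plan is to produce, for each $\varepsilon>0$, one set of functions that is relatively compact in $\mathcal{C}([0,T];D(A^{-\rho}))\cap\sL^p([0,T];\sV)$ and whose complement is given mass less than $\varepsilon$ by every law $\mathrm{Law}(u_n)$, and to handle the statement about $Au_n$ separately by reflexivity. The uniform-in-$n$ a priori bounds I would use are: $\sup_n\E\big[\int_0^T|Au_n(s)|_2^2\,\dds\big]<\infty$ and $\sup_n\E\big[\sup_{t\in[0,T]}|\nabla u_n(t)|_2^2\big]<\infty$, from Theorems \ref{thm.moments} and \ref{thm.momentsup} with $k=1$ (so $(u_n)$ is bounded in $\sL^2(\Omega;\sL^2([0,T];D(A)))$ and in $\sL^2(\Omega;\sL^\infty([0,T];\sV))$), and $\sup_n\E\big[\|u_n\|_{W^{\theta,p_0}([0,T];\sH)}^2\big]<\infty$ from Proposition \ref{prop.holderw}, which holds for every fixed $\theta\in]0,1/2[$ and $p_0\geq2$.

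First I would fix $p_0>2$ and $\theta\in]1/p_0,1/2[$, so that $\theta p_0>1$, and consider for $R>0$ the set
\[
 K_R=\Big\{u:\ \|u\|_{\sL^2([0,T];D(A))}+\|u\|_{W^{\theta,p_0}([0,T];\sH)}+\|u\|_{\sL^\infty([0,T];\sV)}\leq R\Big\}.
\]
The next step is to check that $K_R$ is relatively compact simultaneously in $\mathcal{C}([0,T];D(A^{-\rho}))$ and in $\sL^p([0,T];\sV)$ for every $p\in[2,\infty[$. For the first I would use that $\rho>0$ makes $\sH=D(A^0)\hookrightarrow D(A^{-\rho})$ compact and that $\theta p_0>1$ gives the Morrey-type embedding $W^{\theta,p_0}([0,T];\sH)\hookrightarrow\mathcal{C}^{0,\theta-1/p_0}([0,T];\sH)$; an Arzelà--Ascoli argument then yields $W^{\theta,p_0}([0,T];\sH)\hookrightarrow\hookrightarrow\mathcal{C}([0,T];D(A^{-\rho}))$. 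For the second I would invoke the Aubin--Lions--Simon compactness criterion along the chain $D(A)\hookrightarrow\hookrightarrow\sV\hookrightarrow\sH$: the bounds in $\sL^2([0,T];D(A))$ and $W^{\theta,p_0}([0,T];\sH)$ make every sequence in $K_R$ have a subsequence converging in $\sL^2([0,T];\sV)$, its limit again lying in the $\sL^\infty([0,T];\sV)$-ball of radius $R$; then the elementary interpolation $\|v\|_{\sL^p([0,T];\sV)}^p\leq(2R)^{p-2}\|v\|_{\sL^2([0,T];\sV)}^2$ applied to $v=u_{n_k}-u$ (for which $|v(t)|_\sV\leq 2R$ a.e.) upgrades the convergence to $\sL^p([0,T];\sV)$. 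Hence $\overline{K_R}$ is compact in the intersection space, and Chebyshev's inequality applied to the three a priori bounds gives $\sup_n\Prb(u_n\notin K_R)\to0$ as $R\to\infty$, which is the claimed tightness.

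For the last assertion, Theorem \ref{thm.moments} with $k=1$ gives $\sup_n\E\big[\|Au_n\|_{\sL^2([0,T];\sH)}^2\big]<\infty$, and since $\sL^2([0,T];\sH)$ is a separable Hilbert space its closed balls are weakly compact and metrizable for the weak topology, so Chebyshev's inequality once more yields tightness of $(\mathrm{Law}(Au_n))_n$ in $\sL^2([0,T];\sH)$ equipped with the weak topology. The step I expect to be the most delicate is passing from the $\sL^2$-in-time compactness that Aubin--Lions--Simon directly provides to $\sL^p$-in-time with $p>2$: this is precisely why one needs to combine the fractional-in-time regularity of Proposition \ref{prop.holderw} (itself forced by the white-noise term, which prevents any bound on $\partial_t u_n$ as an ordinary function) with the pathwise $\sL^\infty([0,T];\sV)$ bound of Theorem \ref{thm.momentsup}, rather than with a mere second-moment energy estimate.
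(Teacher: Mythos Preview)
Your argument is correct; the route differs from the paper's in how you reach compactness in $\sL^p([0,T];\sV)$. The paper first interpolates \emph{in space}: from $\|u\|_{\sH^{1+2/p}}^p\leq\|u\|_{\sH^1}^{p-2}\|u\|_{\sH^2}^2$ and the higher-moment bound of Theorem~\ref{thm.moments} it obtains a uniform bound in $\sL^p(\Omega;\sL^p([0,T];\sH^{1+2/p}))$, and then invokes the Flandoli--Gatarek compact embedding $\sL^p([0,T];\sH^{1+2/p})\cap W^{\theta,p}([0,T];\sH)\hookrightarrow\hookrightarrow\sL^p([0,T];\sV)$ directly. You instead apply Aubin--Lions--Simon only at the $\sL^2$-in-time level (using just the $k=1$ estimates) and then interpolate \emph{in time} against the pathwise $\sL^\infty([0,T];\sV)$ bound from Theorem~\ref{thm.momentsup}. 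Your approach is slightly more hands-on but has the mild advantage of needing only first-moment energy estimates and standard Arzel\`a--Ascoli/Simon compactness, whereas the paper's version packages everything into a single citation to \cite{FlandoliGatarek} at the cost of the space-interpolation step and implicitly the $k\geq p/2$ case of Theorem~\ref{thm.moments}. For $\mathcal{C}([0,T];D(A^{-\rho}))$ and for the weak-$\sL^2$ tightness of $Au_n$ the two arguments are essentially identical.
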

\begin{proof}
The classical interpolation inequality
\[
   \|u\|_{\sH^{1+\rho}}\leq  \|u\|_{\sH^{1}}^{1-\rho} \|u\|_{\sH^{2}}^\rho, \qquad \rho\in [0,1]
\]
implies
\[
     \|u\|_{\sH^{1+\frac{2}{p}}}^{p}\leq  \|u\|_{\sH^{1}}^{p-2} \|u\|_{\sH^{2}}^2, \qquad p\in [2,\infty[.
\]
Then, by Theorem \ref{thm.moments} and Proposition \ref{prop.holderw} we deduce  that $(u_n)_n$ is bounded in 
\begin{equation*} 
   \sL^p\left(\Omega;\sL^p([0,T];\sH^{1+\frac{2}{p}})\right)\cap \sL^2\left( \Omega; \sL^2([0,T];D(A))\right)\cap \sL^2\left( \Omega; W^{\theta, p}([0,T];\sH)\right)
\end{equation*}
for any $p\in]2,\infty[$ and $\theta<1/2$. 
Taking into account  \cite[Theorem 2.1 and Theorem 2.2]{FlandoliGatarek}, for any $p\in]2,\infty[ $ and $\theta<1/2$ such that $\theta p>1$ the embeddings 
\begin{eqnarray*}
  &&   W^{\theta, p}([0,T];\sH)  \hookrightarrow \mathcal{C}([0,T];D(A^{-\rho})),\qquad \rho>0\\
  &&  \sL^p([0,T];\sH^{1+\frac{2}{p}})     \cap   W^{\theta, p}([0,T];\sH)\hookrightarrow \sL^p([0,T];\sV)
\end{eqnarray*}
are compact. 
Moreover, 
we have that $\sL^2([0,T]; \sH)$ endowed with the weak topology is a complete metrizable space.
Then, by Theorem \ref{thm.moments}, we deduce that the laws of the random variables $Au_n$ are tight in  $\sL^2([0,T]; \sH)$, 
endowed with the weak topology.
 
Then, the result follows by Prokhorov's theorem. 
\end{proof}

\begin{Th} \label{th.compactness}
Let $u_0\in \sV$.
Then, there exists a probability space $(\tilde\Omega,\mathcal{\tilde F},\tilde\Prb)$, 
a cylindrical Wiener processes $\tilde W(t)$,  defined on $(\tilde\Omega,\mathcal{\tilde F},\tilde\Prb)$, a stochastic process
\begin{eqnarray*}
  && u\in    \mathcal{C}([0,T];D(A^{-\rho}))\cap \sL^p([0,T];\sV)\cap \sL^2([0,T];D(A)),\qquad \rho>0,
\end{eqnarray*}
adapted to the filtration generated by $\tilde W$ 
 and a subsequence ( for simplicity it is not relabeled ) such that for any  $p<\infty$  and $\tilde\Prb$-a.s. the solution $u_n$ of problem \eqref{eq.approx}
 with $\tilde W(t)$ instead of $W(t)$ satisfies
\begin{equation*}
 \begin{split}
 (i)\ & u_n\to u \quad \text{ strongly in } \mathcal{C}([0,T];D(A^{-\rho})),\, \rho>0\\ 
   (ii)\ & u_n\to u \quad \text{ strongly in } \sL^p([0,T]; \sV),\, p\in [1,\infty[\\
 (iii)\ & \int_0^T\la A u_n(s), \xi(s)\ra \dd s \to \int_0^T\la A u(s), \xi(s)\ra \dd s \text{ for any } \xi \in \sL^2([0,T]; \sH)\\
\end{split}
\end{equation*}
\end{Th}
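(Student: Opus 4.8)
The statement is a standard stochastic-compactness (Skorokhod representation) argument, so the plan is to combine the tightness result of Lemma \ref{le.tightness} with Skorokhod's theorem and then upgrade to the listed modes of convergence using the uniform bounds of Theorem \ref{thm.moments} and Proposition \ref{prop.holderw}. First I would consider the laws $\mu_n$ of the pairs $(u_n, W)$ on the product space
\[
   \mathcal{X}:=\left(\mathcal{C}([0,T];D(A^{-\rho}))\cap \sL^p([0,T];\sV)\right)\times \mathcal{C}([0,T];D(A^{-\rho})),
\]
which by Lemma \ref{le.tightness} (and the trivial tightness of the law of $W$, a single fixed measure) form a tight family; by Prokhorov's theorem a subsequence converges weakly. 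Applying the Skorokhod representation theorem on a new probability space $(\tilde\Omega,\tilde{\mathcal F},\tilde\Prb)$ yields random variables $(u_n,\tilde W_n)$ (not relabelled) with the same laws as $(u_n,W)$ and converging $\tilde\Prb$-a.s. in $\mathcal{X}$ to some $(u,\tilde W)$. One checks in the usual way that $\tilde W$ is again a cylindrical Wiener process (its law is unchanged) and that, since each $u_n$ solves \eqref{eq.approx} driven by $W$ with coefficients that are Borel functions of the trajectory, the copy $u_n$ on $\tilde\Omega$ solves \eqref{eq.approx} driven by $\tilde W_n$; passing to the common limit one arranges that $u_n$ solves \eqref{eq.approx} with $\tilde W$ in place of $W$, and $u$ is adapted to the filtration generated by $\tilde W$.

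Next I would identify the three convergence modes. Items (i) and (ii) are immediate from the Skorokhod construction, which gives $\tilde\Prb$-a.s. convergence in $\mathcal{C}([0,T];D(A^{-\rho}))\cap \sL^p([0,T];\sV)$ for the fixed exponent $p$ chosen; to get (ii) for every $p\in[1,\infty[$ one interpolates between the $\sL^{p_0}([0,T];\sV)$ convergence for one large $p_0$ and the uniform-in-$n$ bound in $\sL^q(\tilde\Omega;\sL^q([0,T];\sV))$ coming from Theorem \ref{thm.moments}, together with Hölder and a standard uniform-integrability argument. Item (iii) is the weak-$\sL^2([0,T];\sH)$ convergence of $Au_n$: by Theorem \ref{thm.moments} the sequence $(Au_n)_n$ is bounded in $\sL^2(\tilde\Omega;\sL^2([0,T];\sH))$, hence (along a further subsequence) converges weakly in $\sL^2([0,T];\sH)$, $\tilde\Prb$-a.s. or at least in expectation against test functions; the limit must coincide with $Au$ because $A$ is closed and $u_n\to u$ in, say, $\sL^2([0,T];D(A^{-\rho}))$, so testing against smooth $\xi$ and passing to the limit forces $\int_0^T\la Au_n,\xi\ra \to \int_0^T\la Au,\xi\ra$. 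Finally, the membership $u\in \sL^2([0,T];D(A))$ follows from the weak lower semicontinuity of the $\sL^2([0,T];D(A))$-norm under the convergence just established, using again the uniform bound of Theorem \ref{thm.moments}.

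The main obstacle — or rather the point requiring the most care — is the identification that the limit process $u$ together with $\tilde W$ actually solves \eqref{eq.approx}: one must show that each term in the Galerkin equation converges along the Skorokhod sequence. The linear and stochastic terms are handled by (iii) and by the standard martingale-representation / convergence-of-stochastic-integrals lemma (the noise coefficient $P_nQ$ is linear and bounded), but the nonlinear term $(I+\alpha^2A)^{-1}\widetilde{B}_n(u_n,u_n+\alpha^2Au_n)$ is delicate: one rewrites $\la (I+\alpha^2A)^{-1}\widetilde{B}_n(u_n,u_n+\alpha^2Au_n),\varphi\ra$ for a fixed finite linear combination $\varphi$ of eigenfunctions as $\la \widetilde{B}(P_nu_n,P_n(u_n+\alpha^2Au_n)),P_n(I+\alpha^2A)^{-1}\varphi\ra$, and then uses the strong convergence $u_n\to u$ in $\sL^2([0,T];\sV)$ (item (ii)) to control the low-order factor, the weak convergence of $Au_n$ (item (iii)) to handle the $\alpha^2Au_n$ factor, and the continuity/boundedness estimates of Proposition \ref{prop.1.1} — in particular part (iv) or (vi) — to bound the bilinear form and justify the limit passage, e.g. by writing the difference and estimating it with the product of a strongly-vanishing factor and a uniformly-bounded one. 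This mixed weak/strong limit passage in the trilinear term, together with checking the a.s.\ integrability conditions \eqref{eq.def.sol0} for the limit, is where the real work lies; everything else is routine once the tightness of Lemma \ref{le.tightness} is in hand.
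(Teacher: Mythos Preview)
Your approach is essentially the paper's: tightness from Lemma~\ref{le.tightness}, Skorokhod representation (the paper adds ``by a diagonal extraction argument'' to cover all $\rho>0$ and all $p$), and then identification of the weak $\sL^2([0,T];\sH)$-limit of $Au_n$ with $Au$ via the closedness of $A$. So the core is correct and matches the paper.

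One clarification: your final paragraph misreads the scope of the theorem. Theorem~\ref{th.compactness} does \emph{not} assert that the limit $u$ solves anything; it only produces the new probability space, the new Wiener process $\tilde W$, and the convergences (i)--(iii) for the Galerkin solutions $u_n$ driven by $\tilde W$. The passage to the limit in the nonlinear term --- your ``main obstacle'' --- is carried out separately in the paper as Lemma~\ref{le.def.sol}, where one shows that $u$ is a weak solution of \eqref{eq.aNSabstract} (not of \eqref{eq.approx}). So that paragraph is not wrong in spirit, but it belongs to the next step of the argument and should be excised from the proof of this theorem.
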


\begin{proof}
 Taking into account Lemma \eqref{le.tightness}, by Skorohod representation theorem and by a diagonal extraction argument, there exists a probability space
$(\tilde\Omega,\mathcal{\tilde F},\tilde\Prb)$,  a cylindrical Wiener process $\tilde W(t)$  defined on $(\tilde\Omega,\mathcal{\tilde F},\tilde\Prb)$, 
a stochastic process $u$ such that the convergence conditions in  {\em (i)--(ii)} hold.
For {\em (iii)}, notice that there exists a random variable $v$ such that $Au_n\to v$ weakly in $\sL^2([0,T];\sH)$ (modulo a new subsequence). 
The fact that $v$ can be identify with $Au$ follows by the closure of the operator $A$ and by the density of $D(A)$ in $\sH$.
\end{proof}

\section{Existence and uniqueness - Proof ot Theorem \ref{thm.intro}}\label{sec:proof}

\subsection{Existence}

By Theorem \ref{th.compactness} we know that there exists a subsequence  
$(u_n)_n$, converging $\widetilde \Prb$-a.s. to a process $u\in  \mathcal{C}([0,T];D(A^{-\rho}))\cap \sL^p([0,T];\sV)\cap \sL^2([0,T];D(A))$, $\rho>0$, $p\geq1$. 

The rest of the proof will be splitted in several lemma :
in Lemma \ref{le.supbound}, we shall show that the process $u$ satisfies the bounds \eqref{eq.aNSCHestimate}.
Then,  in Lemma \ref{le.def.sol} we shall show that $u$ is a weak solution of  the abstract problem.
Finally, in Lemma \ref{le.uniqueness} we shall show that pathwise uniqueness holds, which will give the existence and uniqueness of a strong solution.

\begin{Le} \label{le.supbound}
 Under hypothesis of Theorem \ref{thm.intro}, we have that \eqref{eq.aNSCHestimate} holds.
 Moreover, $\tilde \Prb$-a.s. we have $u\in C([0,T];\sH_w)$, where $\sH_w$ is the space $\sH$ endowed with the weak topology. 
\end{Le}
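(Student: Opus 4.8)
The statement to prove is Lemma~\ref{le.supbound}: the limit process $u$ obtained in Theorem~\ref{th.compactness} satisfies the moment bound \eqref{eq.aNSCHestimate}, and moreover $u\in C([0,T];\sH_w)$ almost surely. The natural route is to transfer the uniform bounds of Theorem~\ref{thm.moments} and Theorem~\ref{thm.momentsup} from the Galerkin approximations $u_n$ to $u$ via lower semicontinuity under the weak-type convergences, and then to use a standard argument (Strauss / Lions-Magenes) to upgrade the a.e.-$t$ weak continuity coming from boundedness in $C([0,T];D(A^{-\rho}))$ and boundedness in $L^\infty(0,T;\sH)$ to genuine weak continuity into $\sH$.

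First I would fix $k\geq 1$ as in the hypothesis. By Theorem~\ref{th.compactness}, along the chosen subsequence, $\tilde\Prb$-a.s. $u_n\to u$ strongly in $L^p([0,T];\sV)$ and in $C([0,T];D(A^{-\rho}))$; in particular, for a.e. $t$, $u_n(t)\to u(t)$ strongly in $D(A^{-\rho})$ hence weakly in $\sH$ along a further (random) subsequence, so $|u(t)|_2\le\liminf_n|u_n(t)|_2$ and similarly $|\nabla u(t)|_2^2\le\liminf_n|\nabla u_n(t)|_2^2$ (using the strong $L^p([0,T];\sV)$ convergence to pass along a subsequence converging a.e.\ in $t$ in $\sV$, or weak lower semicontinuity of the $\sV$-norm). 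For the supremum bound I would argue: $\sup_{0\le t\le T}|u(t)|_2^{2k}$. Because $u$ has paths in $C([0,T];D(A^{-\rho}))$ and is bounded in $\sH$, $t\mapsto|u(t)|_2^2$ is lower semicontinuous (being a $\liminf$ of continuous functions if we first establish weak continuity, or directly via the density argument below), so the sup is attained as a countable sup over a dense set of times, and at each such time we use weak lower semicontinuity plus Fatou. Concretely,
\[
\tilde\E\Big[\sup_{0\le t\le T}|u(t)|_2^{2k}\Big]
\le \tilde\E\Big[\liminf_n\sup_{0\le t\le T}|u_n(t)|_2^{2k}\Big]
\le\liminf_n\tilde\E\Big[\sup_{0\le t\le T}|u_n(t)|_2^{2k}\Big],
\]
and the right-hand side is controlled by Theorem~\ref{thm.momentsup} together with the fact that the law of $u_n$ under $\tilde\Prb$ equals the law of the Galerkin solution under $\Prb$ (Skorohod). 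For the integral bound in \eqref{eq.aNSCHestimate} I would combine strong convergence in $L^p([0,T];\sV)$ (giving $|u_n|_2^2+\alpha^2|\nabla u_n|_2^2\to|u|_2^2+\alpha^2|\nabla u|_2^2$ in $L^{p/2}$, hence a.e.\ in $(t,\omega)$ along a subsequence) with the weak convergence $Au_n\rightharpoonup Au$ in $L^2([0,T];\sH)$; the product $\big(|u_n|_2^2+\alpha^2|\nabla u_n|_2^2\big)^{k-1}\big(|\nabla u_n|_2^2+\alpha^2|Au_n|_2^2\big)$ is a convex-type functional in the $Au_n$ variable with coefficients converging strongly, so Fatou plus weak lower semicontinuity of $\int\phi(t)|Au|_2^2\,dt$ for nonnegative weights $\phi$ gives the bound, again with the constant from Theorem~\ref{thm.moments}.

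For the weak continuity statement, I would use the standard trick: $u$ is bounded in $L^\infty([0,T];\sH)$ (from the sup bound just proved, $\tilde\Prb$-a.s.) and $u\in C([0,T];D(A^{-\rho}))$, and $D(A^{-\rho})$ embeds densely in $\sH'\simeq\sH$; hence for each fixed $h\in\sH$ and $\varepsilon>0$ one picks $h_\varepsilon\in D(A^{\rho})$ with $|h-h_\varepsilon|_2<\varepsilon$, notes that $t\mapsto\la u(t),h_\varepsilon\ra=\la A^{-\rho}u(t),A^\rho h_\varepsilon\ra$ is continuous, and estimates $|\la u(t)-u(s),h\ra|\le|\la u(t)-u(s),h_\varepsilon\ra|+2\varepsilon\sup_t|u(t)|_2$; this shows $t\mapsto\la u(t),h\ra$ is continuous for every $h\in\sH$, i.e.\ $u\in C([0,T];\sH_w)$.

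\textbf{Main obstacle.} The delicate point is the passage to the limit in the \emph{integral} bound of \eqref{eq.aNSCHestimate}: one has only weak convergence of $Au_n$ in $L^2([0,T];\sH)$, so the term $|Au_n|_2^2$ (and the product with the lower-order factor) cannot be passed directly, and one must carefully invoke weak lower semicontinuity of the appropriate convex functional together with the a.e.\ convergence of the weight $\big(|u_n|_2^2+\alpha^2|\nabla u_n|_2^2\big)^{k-1}$ — extracting sub-subsequences and using that the bound of Theorem~\ref{thm.moments} is uniform in $n$ so Fatou applies. A minor additional care is needed to ensure the sup over $t\in[0,T]$ of $|u(t)|_2^2$ really equals a countable sup (using weak continuity, proved last, or proved first along a dense set and then completed), so the measurability and the Fatou step are legitimate.
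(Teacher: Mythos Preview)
Your proposal is correct and follows essentially the same route as the paper: pass the uniform Galerkin bounds of Theorems~\ref{thm.moments} and~\ref{thm.momentsup} to $u$ by weak lower semicontinuity and Fatou, then get $u\in C([0,T];\sH_w)$ from $L^\infty([0,T];\sH)\cap C([0,T];D(A^{-\rho}))$ (the paper simply cites Temam for this last step, which is exactly your density argument).

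For the integral bound---the point you flag as the main obstacle---the paper packages the lower semicontinuity step a bit more cleanly than your ``convex functional with strongly converging weights'' formulation, and it is worth seeing: from the strong convergence $u_n\to u$ in $L^p([0,T];\sV)$ one has $|u_n|_\sV^{p/2}\xi\to|u|_\sV^{p/2}\xi$ strongly in $L^2([0,T];\sH)$ for every $\xi\in L^2([0,T];\sH)$; pairing this against the weak convergence $Au_n\rightharpoonup Au$ shows that $|u_n|_\sV^{p/2}Au_n\rightharpoonup|u|_\sV^{p/2}Au$ weakly in $L^2([0,T];\sH)$, and then lower semicontinuity of the $L^2$ norm gives
\[
   \int_0^T|u|_\sV^{p}|Au|_2^2\,\dd t\le\liminf_{n\to\infty}\int_0^T|u_n|_\sV^{p}|Au_n|_2^2\,\dd t,
\]
after which Fatou in $\omega$ and Theorem~\ref{thm.moments} (with $p=2(k-1)$) finish. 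This is your idea made concrete by identifying the right weakly convergent product; no sub-subsequence juggling is needed.
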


\begin{proof}
Let us show the first bound of \eqref{eq.aNSCHestimate}. 
Let us notice that by the definition of the norm in $D(A^{\rho})$ it holds $\|u\|_{D(A^{-\rho})}\leq |u|_{\sH}$, for all $\rho>0$.
By Theorem \ref{th.compactness}, 
\[
   \sup_{t\in[0,T]}\|u(t)\|_{D(A^{-\rho})}= \lim_{n\to\infty}\left(\sup_{t\in[0,T]}\|u_n(t)\|_{D(A^{-\rho})}  \right)\leq \liminf_{n\to\infty}\left(\sup_{t\in[0,T]}|u_n(t)|_{\sH}  \right).
\]
By Fatou's lemma and Theorem \ref{thm.momentsup} we deduce that for any $k>0$ there exists $c>0$ depending on $k$ such that
\[
   \widetilde\E\left[  \sup_{t\in[0,T]}|u|_{\sH}^{2k}\right]
    \leq    \liminf_{n\to\infty}\widetilde\E\left[\sup_{t\in[0,T]}|u_n|_{\sH}^{2k}\right]  
    \leq c\left(\widetilde \E\left[(|u_0|_2^2+\alpha^2|\nabla u_0|_2^2)^k\right]+T \right)
\]
which implies that the first bound  in  \eqref{eq.aNSCHestimate} holds.
Let us show the second bound. 
Let us fix $p\geq1$.
Notice that by Theorem \ref{th.compactness} we have, $\widetilde \Prb$-a.s. that  for any $\xi \in \sL^2([0,T],\sH)$ the limit  $\xi(\cdot) |u_n(\cdot)|_{\sV}^{p/2}\to \xi(\cdot)|u(\cdot)|_{\sV}^{p/2}$ holds strongly in $L^2([0,T];\sH)$.
Then, since $Au_n\to Au$ holds weakly in $\sL^2([0,T];\sH)$ (see {\em (iii)} of Theorem \ref{th.compactness}),
\[
  \lim_{n\to\infty} \int_0^T \la Au_n(t),|u_n(t)|_{\sV}^{p/2}\xi(t)\ra \dd t = \int_0^T \la Au(t),|u(t)|_{\sV}^{p/2}\xi(t)\ra \dd t.
\]
We deduce that $\widetilde \Prb$-a.s. the limit $Au_n(\cdot)|u_n(\cdot)|_{\sV}^{p/2}\to  Au(\cdot) |u(\cdot)|_{\sV}^{p/2}$ holds weakly in $\sL^2([0,T];\sH)$.
By well known properties of weak limits and Fatou Lemma we get, using Theorem \ref{thm.momentsup}
\begin{eqnarray*}
   \widetilde\E\left[ \int_0^T|Au(t)|_2^2|u(t)|_{\sV}^p\ddt \right] 
   &\leq&  \widetilde\E\left[ \liminf_{n\to\infty }\int_0^T|Au_n(t)|_2^2|u_n(t)|_{\sV}^p\ddt \right]\\
   &\leq&  \liminf_{n\to\infty }\widetilde\E\left[ \int_0^T|Au_n(t)|_2^2|u_n(t)|_{\sV}^p\ddt \right]\\
   &\leq&  c\left( \widetilde \E\left[(|u_0|_2^2 +\alpha^2 |\nabla u_0|_2^2)^{p+1}\right]+T\right).
\end{eqnarray*}
which implies the second bound of \eqref{eq.aNSCHestimate}.

In order to complete the proof we need to show that $u\in C([0,T];\sH_w)$.
Observe that $\tilde \Prb$-a.s. we have $u\in \sL^{\infty}([0,T];\sH)\cap C([0,T];D(A^{-\rho}))$.
Then, $\Prb$-a.s., $u(t)\in \sH$ for any $t\in [0,T]$ and $u\in C([0,T];\sH_w)$ (see, for instance, \cite[page 263]{Temam}).
\end{proof}

\begin{Le} \label{le.def.sol}
Assume that the linear operator $Q$ satisfies Hypothesis \ref{hyp.trace}.
Let $\mu_{0}$ be a probability measure on $(\sH,\mathcal{B}(\sH))$ such that for some $k\geq1$ it holds
\[
    \int_\sH\left(|x|_2^2+|\nabla x|_2^2\right)^k\mu_{0}(dx)<\infty. 
\]
Then, there exists a weak solution of \eqref{eq.aNSabstract} in the sense of Definition \ref{def.sol.weak}. 
 \end{Le}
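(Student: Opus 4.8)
The plan is to pass to the limit in the Galerkin approximation constructed in Theorem \ref{th.compactness}, showing that the limit process $u$ together with the probability space $(\tilde\Omega,\mathcal{\tilde F},\tilde\Prb)$ and Wiener process $\tilde W$ produced there satisfies all the requirements of Definition \ref{def.sol.weak}. The adaptedness and the path regularity $u\in C([0,T];\sH)$ (in fact $C([0,T];\sH_w)$ together with the $\sL^\infty$ bound, upgraded to strong continuity via the standard energy argument) have essentially been established in Lemma \ref{le.supbound}; the integrability condition \eqref{eq.def.sol0} follows from the bounds \eqref{eq.aNSCHestimate} just proved, together with the estimate \eqref{eq.ineqB} of Proposition \ref{prop.holderw} which controls $|(I+\alpha^2A)^{-1}\widetilde B_n(u_n,u_n+\alpha^2Au_n)|_2$ by $\frac{c}{\alpha^2}|u_n|_\sV(|u_n|_2+\alpha^2|Au_n|_2)$, a quantity integrable in $t$ and with finite expectation by Theorem \ref{thm.moments}. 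It remains to identify the limit equation.

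First I would fix $m\in\N$ and test the Galerkin equation \eqref{eq.approx} against a fixed basis element $e_j$, $j\leq m$, obtaining for each $t$
\[
  \la u_n(t),e_j\ra +\nu\int_0^t\la Au_n(s),e_j\ra\dd s+\int_0^t\la (I+\alpha^2A)^{-1}\widetilde B_n(u_n,u_n+\alpha^2Au_n)(s),e_j\ra\dd s=\la P_nu_0,e_j\ra+\la (P_nQ)\tilde W(t),e_j\ra.
\]
Each term is passed to the limit $\tilde\Prb$-a.s.\ along the subsequence of Theorem \ref{th.compactness}. The linear term $\la u_n(t),e_j\ra\to\la u(t),e_j\ra$ uses {\em (i)} of that theorem (for $n$ large $P_ne_j=e_j$); the Stokes term uses the weak convergence {\em (iii)}, applied with the test function $\xi(s)=\1_{[0,t]}(s)e_j$; the noise term converges because $P_nQ\to Q$ in $\mathcal{L}_2(K;\sH)$ and $\tilde W$ is a fixed process; and the initial term is trivial. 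The delicate point is the nonlinear term: I would write $\la (I+\alpha^2A)^{-1}\widetilde B_n(u_n,u_n+\alpha^2Au_n),e_j\ra=\la \widetilde B_n(u_n,u_n+\alpha^2Au_n),(I+\alpha^2A)^{-1}e_j\ra$ and use the duality bound {\em (iv)} (or {\em (vi)}) of Proposition \ref{prop.1.1}, together with the strong convergence $u_n\to u$ in $\sL^p([0,T];\sV)$ from {\em (ii)} and the weak convergence $Au_n\rightharpoonup Au$ in $\sL^2([0,T];\sH)$. Concretely, one splits $\widetilde B(u_n,u_n+\alpha^2Au_n)-\widetilde B(u,u+\alpha^2Au)$ into $\widetilde B(u_n-u,\,\cdot\,)$ plus $\widetilde B(u,\,\cdot\,)$ terms; the first is handled by the strong $\sV$-convergence of $u_n-u$ against the $\sL^2(D(A))$-bound on the second slot, and the second, which is linear in the difference $u_n-u$ and $Au_n-Au$, pairs the weak convergence of $Au_n$ with the strong convergence of $u_n$ in $\sV$ — this product of weak and strong limits is exactly what {\em (ii)}--{\em (iii)} are designed to give.

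Once the scalar identity holds $\tilde\Prb$-a.s.\ for every $j$ and every $t$ in a countable dense set of $[0,T]$, continuity in $t$ of all terms extends it to all $t$, and since $\{e_j\}$ is an orthonormal basis of $\sH$ this yields the identity \eqref{eq.def.sol} as an equality in $\sH$. Finally one checks that $\tilde W$ is a genuine $(\mathcal{F}_t)$-cylindrical Wiener process on $(\tilde\Omega,\mathcal{\tilde F},\tilde\Prb)$ with respect to the filtration generated by $u$ and $\tilde W$ — this is standard once the a priori bounds guarantee uniform integrability, via the martingale characterization (show $\tilde W$ and $\tilde W\otimes\tilde W-t\,\mathrm{Id}$ define martingales, passing to the limit from the Galerkin system where this is automatic). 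The law of $u(0)$ is $\mu_0$ because $u_n(0)=P_nu_0$ had law $(P_n)_*\mu_0\to\mu_0$ weakly and $u_n(0)\to u(0)$. I expect the main obstacle to be the nonlinear-term passage to the limit, specifically making rigorous the pairing of the weakly convergent $Au_n$ with the nonlinearity: one must be careful that the test function $(I+\alpha^2A)^{-1}e_j$ lies in $D(A)$ (it does, since $e_j\in D(A)$ and $(I+\alpha^2A)^{-1}$ maps $\sH$ into $D(A)$), so that the duality estimate {\em (vi)} of Proposition \ref{prop.1.1} applies with a constant independent of $n$, and that the resulting bilinear form is continuous for the strong-$\sV$ times weak-$\sL^2(\sH)$ topology on the relevant bounded sets.
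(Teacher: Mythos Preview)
Your proposal is correct and follows essentially the same route as the paper: pass to the limit in the Galerkin system using the convergences of Theorem \ref{th.compactness}, identify the nonlinear term by combining the strong convergence $u_n\to u$ in $\sL^p([0,T];\sV)$ with the weak convergence $Au_n\rightharpoonup Au$ in $\sL^2([0,T];\sH)$ through the continuity of the trilinear form, and upgrade $C([0,T];\sH_w)$ to $C([0,T];\sH)$ via the energy identity. The only presentational differences are that the paper tests against general $\xi\in \sL^2([0,T];D(A))$ rather than individual eigenvectors $e_j$, and spells out the energy argument for strong continuity of $t\mapsto |u(t)|_2^2$ in detail (which you correctly flagged but deferred).
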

\begin{proof}
Let us first show that $u$ solve \eqref{eq.aNSabstract}.
Since $u_n$ solves \eqref{eq.approx}, it is sufficient to show that the right-hand side of \eqref{eq.approx} converges to the right-hand side of \eqref{eq.aNSabstract}.

Let $\xi \in \sL^2([0,T]; D(A)) $.  
By Theorem \ref{th.compactness},  {\em (iii)} we have 
\[
  \lim_{n\to\infty}\int_0^T\la   u_{n},\xi(t)\ra \dd t= \int_0^T\la   u ,\xi(t)\ra \dd t 
\]
and, by the Fubini theorem and the dominated convergence theorem,
\begin{eqnarray*}
\lim_{n\to\infty }\nu  \int_{0}^{T} \la\int_{0}^{t} Au_{n}(\tau)\dd\tau, \xi(t)\ra \ddt
   &=& \nu \int_{0}^{T}\left(\lim_{n\to\infty }\int_{0}^{t}\langle  Au(\tau), \xi(t) \rangle d\tau \right)\ddt \\
   &=& \nu  \int_{0}^{T} \la\int_{0}^{t}Au(\tau),\xi(t) \ddt  \ra \dd\tau.
\end{eqnarray*}
Observe that by Proposition \ref{prop.1.1} (ii) it holds, for some $c>0$,
\begin{multline*}
   \left|\int_0^T\int_0^t\la \widetilde{B}(u(\tau),v(\tau)),(I+\alpha^2A)^{-1}\xi(t)\ra \dd\tau\dd t\right| \\
     \leq c\left(\int_0^T|u(\tau)|_{\sV}^2\dd \tau \right)^{\frac12}\left(\int_0^T|v(\tau)|_{2}^2\dd \tau \right)^{\frac12}\left(\int_0^T|A(I+\alpha^2A)^{-1}\xi(t)|_{\sH}^2\dd t \right)^{\frac12}\\
     \leq \frac{c}{\alpha^2}\left(\int_0^T|u(\tau)|_{\sV}^2\dd \tau \right)^{\frac12}\left(\int_0^T|v(\tau)|_{2}^2\dd \tau \right)^{\frac12}\left(\int_0^T|\xi(t)|_{\sH}^2\dd t \right)^{\frac12}
\end{multline*}
This implies that the trilinear form 
\begin{eqnarray*}
 && \sL^2([0,T];\sV)\times  \sL^2([0,T];\sL^2(\qspace))\times  \sL^2([0,T];\sH) \to \R \\
 && (u,v,\xi)\mapsto \int_0^T \int_0^t\la (I+\alpha^2A)^{-1}\widetilde{B}(u(\tau),v(\tau)),\xi(t)\ra\dd\tau \dd t
\end{eqnarray*}
is continuous.
Since by Theorem \ref{th.compactness} we have that $ \widetilde \Prb$-a.s. $u_n\to u$ strongly in $\sL^2([0,T];\sV)$,
that $u_{n} + \alpha^{2}Au_{n}\to u + \alpha^{2}Au $ weakly in $\sL^2([0,T];\sL^2(\qspace))$ and clearly $P_n\xi\to \xi $ strongly in $\sL^2([0,T];\sH)$, 
we deduce that $\widetilde \Prb$-a.s.
\begin{eqnarray*}
  &&\lim_{n\to\infty } \int_{0}^{T} \la \int_0^t   (I+\alpha^2A)^{-1}\widetilde{B}_n(u_n  ,u_{n} + \alpha^{2}Au_{n})(\tau)\dd\tau,\xi(t)\ra\dd t
\\
 &&\qquad  = \lim_{n\to\infty } \int_{0}^{T} \int_0^t \la  (I+\alpha^2A)^{-1}\widetilde{B}(P_nu_n  ,P_n(u_{n} + \alpha^{2}Au_{n}))(\tau),P_n\xi(t)\ra  \dd\tau \dd t
\\
 &&\qquad =  \int_{0}^{T} \int_0^t \la (I+\alpha^2A)^{-1} \widetilde{B}(u , u + \alpha^{2}Au )(\tau),\xi(t)\ra  \dd\tau \dd t.
\end{eqnarray*}
Finally, it is easy to see that $\tilde \Prb$-a.s. it holds 
\[
   \lim_{n\to\infty} \int_0^T\la  \int_0^t(P_n Q)\dd\widetilde W(\tau), \xi(t)\ra \dd t 
   =  \int_0^T\la  \int_0^t  Q \dd\widetilde  W(\tau), \xi(t)\ra \dd t.   
\]
Let us show that $u$ as paths in $C([0,T];\sH)$.
Using Itô formula on  $|u_n|^2/2$ and integrating over $[0,T]$ we get 
\begin{multline*}
 \frac{1}{2}\int_0^T|u_n(t)|_2^2\dd t   =-\nu\int_0^T\int_0^t\left( |\nabla u_n(s)|_2^2\dd s\right)\ddt\\
   - \int_0^T\left(\int_0^t \la \widetilde{B}_n(u_n(s),u_n(s)+\alpha^2 A u_n(s)),(I+\alpha^2A)^{-1}u_n(s)\ra \dd s\right)\dd t +\int_0^T\left(\int_0^t \la u_n(s),P_nQ\dd W(s)\ra\right)\dd t.
\end{multline*}
By Theorem  \ref{th.compactness} and arguing as before all the terms of the previous formula converges and we get
\begin{multline*}
 \frac{1}{2}\int_0^T|u(t)|_2^2\dd t   =-\nu\int_0^T\int_0^t\left( |\nabla u(s)|_2^2\dd s\right)\ddt\\
   - \int_0^T\left(\int_0^t \la \widetilde{B}(u(s),u(s)+\alpha^2 A u(s)),(I+\alpha^2A)^{-1}u(s)\ra \dd s\right)\dd t +\int_0^T\left(\int_0^t \la u(s),Q\dd W(s)\ra\right)\dd t.
\end{multline*}
By identification, we have, $\ddt\times \widetilde \Prb$ a.e.
 \begin{multline*}
  \frac{1}{2}|u(t)|_2^2   =-\nu\int_0^t |\nabla u(s)|_2^2\dd s\\
    - \int_0^t \la \widetilde{B}(u(s),u(s)+\alpha^2 A u(s)),(I+\alpha^2A)^{-1}u(s)\ra \dd s +\int_0^t \la u(s),Q\dd W(s)\ra.
 \end{multline*}
By the square integrability of $u$, the last term on the right-hand side is a square integrable continuous martingale.
The term with the nonlinear part is integrable on $[0,T]$, since
\[
   \left| \la \widetilde{B}(u(s),u(s)+\alpha^2 A u(s)),(I+\alpha^2A)^{-1}u(s)\ra\right| \leq c|u(s)|_\sV (|u(s)|_2+\alpha^2 |A u(s)|_2)|A(I+\alpha^2A)^{-1}u(s)|_2 
\]
\[
  \leq \frac{c}{\alpha^2}|u(s)|_\sV (|u(s)|_2+\alpha^2 |A u(s)|_2)|u(s)|_2\leq  \frac{c}{\alpha^2}|u(s)|_\sV^2 (|u(s)|_2+\alpha^2 |A u(s)|_2)
\]
By \eqref{eq.aNSCHestimate}, the last term on the right-side belongs to $L^1([0,T];\R)$ $\widetilde \Prb$-almost surely.
We deduce that the map
\[
  t\mapsto -\nu\int_0^t |\nabla u(s)|_2^2\dd s-\int_0^t \la \widetilde{B}(u(s),u(s)+\alpha^2 A u(s)),(I+\alpha^2A)^{-1}u(s)\ra \dd s +\int_0^t \la u(s),QdW(s)\ra
\]
is $ \widetilde \Prb$-a.s. continuous.
Then $t\mapsto |u(t)|_2^2$ is  $ \widetilde \Prb$-a.s. continuous. 
Since by Lemma \ref{le.supbound} we know that $t\mapsto u(t)$ is weakly continuous in $\sH$, we deduce that $ \widetilde \Prb$-a.s. $u\in C([0,T];\sH)$, for all $T>0$.
The proof is complete. 
\end{proof}

\subsection{Uniqueness}
%
%
%

%

\begin{Le} \label{le.uniqueness}
 Under Hypothesis \ref{hyp.trace}, for any random variable  $u_0$ with values in $\sH$ and  such that 
 \[
    \E\left[ |u_0|_2^2+|\nabla u_0|_2^2 \right]<\infty
 \]
 there exists a unique strong solution of  problem \eqref{eq.aNSabstract} with initial value $u(0)=u_0$, in the sense of Definition \ref{def.sol.strong}.
\end{Le}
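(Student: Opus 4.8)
The scheme is the classical one: Lemma~\ref{le.def.sol} (applied with $k=1$ to the law $\mu_0$ of $u_0$ on $\sV$) already provides a weak solution, so by the Yamada--Watanabe principle it is enough to establish \emph{pathwise uniqueness}; the two together produce a solution which is a measurable functional of $(u_0,W)$, and glueing the solutions obtained on $[0,T]$ as $T\uparrow\infty$ (again by uniqueness) gives a strong solution on $[0,\infty[$ in the sense of Definition~\ref{def.sol.strong}. We first remark that any strong solution automatically enjoys the a priori bounds of Theorem~\ref{thm.intro}: from $\int_0^T|Au(t)|_2\,\dd t<\infty$ one gets $u(t)\in D(A)$ for a.e.\ $t$ and $\int_0^T|\nabla u(t)|_2^2\,\dd t=\int_0^T\la u(t),Au(t)\ra\,\dd t\leq (\sup_{t\in[0,T]}|u(t)|_2)\int_0^T|Au|_2\,\dd t<\infty$ $\Prb$-a.s., and then the It\^o-formula/energy computation of Theorem~\ref{thm.moments} and Theorem~\ref{thm.momentsup} (run along a reducing sequence of stopping times, as in Section~\ref{sec:approximated}) gives $u\in \sL^2([0,T];D(A))$ $\Prb$-a.s.\ together with $\sup_{[0,T]}|u|_2<\infty$ and \eqref{eq.aNSCHestimate}. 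Hence we may argue within this class.

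Let $u^{(1)},u^{(2)}$ be two such solutions on one stochastic basis driven by the same $W$, with $u^{(1)}(0)=u^{(2)}(0)=u_0$, and put $w=u^{(1)}-u^{(2)}\in C([0,T];\sH)\cap \sL^2([0,T];D(A))$. The noise cancels, so $w$ solves the pathwise equation
\[
   \frac{\dd w}{\dd t}+\nu Aw+(I+\alpha^2A)^{-1}\Big(\widetilde{B}\big(u^{(1)},(I+\alpha^2A)u^{(1)}\big)-\widetilde{B}\big(u^{(2)},(I+\alpha^2A)u^{(2)}\big)\Big)=0,\qquad w(0)=0.
\]
Pairing with $(I+\alpha^2A)w$ (a Lions--Magenes chain rule, legitimate since $w\in \sL^2([0,T];D(A))$ and $w'\in \sL^2([0,T];\sH)$) gives
\[
   \tfrac12\frac{\dd}{\dd t}\big(|w|_2^2+\alpha^2|\nabla w|_2^2\big)+\nu\big(|\nabla w|_2^2+\alpha^2|Aw|_2^2\big)
   =-\la \widetilde{B}\big(u^{(1)},(I+\alpha^2A)u^{(1)}\big)-\widetilde{B}\big(u^{(2)},(I+\alpha^2A)u^{(2)}\big),\,w\ra.
\]
Expanding the difference as $\widetilde{B}(w,(I+\alpha^2A)u^{(1)})+\widetilde{B}(u^{(2)},(I+\alpha^2A)w)$ and using Proposition~\ref{prop.1.1}(ii) -- which gives $\la\widetilde{B}(w,\cdot),w\ra=0$, $\la\widetilde{B}(u^{(2)},w),w\ra=-b(w,w,u^{(2)})$, and $\la\widetilde{B}(u^{(2)},Aw),w\ra=-\la\widetilde{B}(w,Aw),u^{(2)}\ra$ -- the right-hand side collapses to $b(w,w,u^{(2)})+\alpha^2\la\widetilde{B}(w,Aw),u^{(2)}\ra$.

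It remains to estimate these two terms and close by Gronwall's lemma. The first, $b(w,w,u^{(2)})=-b(w,u^{(2)},w)$, is the usual Navier--Stokes trilinear term: by the Ladyzhenskaya/Sobolev inequality (in $d=2,3$) and Young's inequality it is $\leq \tfrac{\nu}{2}|\nabla w|_2^2+\phi_1(t)|w|_2^2$, where $\phi_1\leq c\,|\nabla u^{(2)}(t)|_2^4$. The second term is the genuinely new one and is the crux: it carries two derivatives of $w$, and we treat it by writing $\la\widetilde{B}(w,Aw),u^{(2)}\ra=-\la\widetilde{B}(u^{(2)},Aw),w\ra$ and applying the sharp estimate of Proposition~\ref{prop.1.1}(v) with $u=u^{(2)}\in D(A)$, $v=Aw\in\sH$, test function $w\in\sV$, followed by Young's inequality, which yields
\[
   \alpha^2\big|\la\widetilde{B}(w,Aw),u^{(2)}\ra\big|\leq \tfrac{\nu\alpha^2}{2}|Aw|_2^2+\phi_2(t)\big(|w|_2^2+\alpha^2|\nabla w|_2^2\big),
\]
with $\phi_2\leq c_\alpha\big(|u^{(2)}|_\sV|Au^{(2)}|_2+|Au^{(2)}|_2^2\big)$, the constant blowing up as $\alpha\to0$. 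Since $|\nabla u^{(2)}|_2^2=\la u^{(2)},Au^{(2)}\ra\leq |u^{(2)}|_2|Au^{(2)}|_2$, the bounds $\sup_{[0,T]}|u^{(2)}|_2<\infty$ and $\int_0^T|Au^{(2)}|_2^2\,\dd t<\infty$ $\Prb$-a.s.\ (from the a priori estimates above) give $\phi_1,\phi_2\in \sL^1([0,T])$ $\Prb$-a.s. Absorbing the $\tfrac{\nu}{2}|\nabla w|_2^2$ and $\tfrac{\nu\alpha^2}{2}|Aw|_2^2$ contributions into the dissipative term yields $\frac{\dd}{\dd t}(|w|_2^2+\alpha^2|\nabla w|_2^2)\leq 2(\phi_1+\phi_2)(t)(|w|_2^2+\alpha^2|\nabla w|_2^2)$ with $\sL^1$ coefficient, so Gronwall's lemma and $w(0)=0$ force $w\equiv0$, i.e.\ $u^{(1)}=u^{(2)}$ in $C([0,T];\sH)$ $\Prb$-a.s.; since $T$ is arbitrary this is pathwise uniqueness, and the statement follows. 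The main obstacle is precisely the estimate of $\alpha^2\la\widetilde{B}(w,Aw),u^{(2)}\ra$: this is where Proposition~\ref{prop.1.1}(v) and the $\alpha$-regularisation are essential, and where the bound degenerates as $\alpha\to0$.
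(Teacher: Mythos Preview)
Your proof is correct and follows essentially the same route as the paper: weak existence from Lemma~\ref{le.def.sol} plus Yamada--Watanabe reduces the problem to pathwise uniqueness, which is obtained by an energy identity for $w=u^{(1)}-u^{(2)}$ in the $(I+\alpha^2A)$-norm, the antisymmetry $\la\widetilde B(w,\cdot),w\ra=0$ killing one bilinear term, Proposition~\ref{prop.1.1} estimates on the surviving term, and Gronwall. The only cosmetic difference is the choice of bilinear splitting (you take $\widetilde B(w,(I+\alpha^2A)u^{(1)})+\widetilde B(u^{(2)},(I+\alpha^2A)w)$ and kill the first summand, the paper takes $\widetilde B(u,(I+\alpha^2A)w)+\widetilde B(w,(I+\alpha^2A)v)$ and kills the second), which by the same antisymmetry leads to the same surviving term and equivalent estimates; the paper's Gronwall coefficient $|u|_\sV^2+\alpha^2|Au|_2^2$ is slightly simpler than your $\phi_1+\phi_2$, but both are $\Prb$-a.s.\ in $\sL^1([0,T])$ by the a priori bounds, so the conclusions coincide.
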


\begin{proof}
Let $\mu_{0}$ be the law of $u_0$ in $\sH$.
Notice that the hypothesis implies that $\mu_{0}$ is concentrated on $\sV$.
By Lemma \ref{le.supbound} and Lemma \ref{le.def.sol} we deduce that there exists a weak solution $(u,W)$
of problem \eqref{eq.aNSabstract} with initial distribution $\mu_{0}$ and such that the bounds 
\begin{equation} \label{eq.estimate.mu}
 \begin{split}
  &\E\left[\sup_{0\leq t\leq T} \left(|u(t)|_{2}^{2}\right)  \right] 
   \leq c\int_\sH\left(|x|_2^2  +\alpha^2|\nabla x|_2^2    \right)\mu_{0}(\dd x) ,\\
  &\E\left[\int_0^{T} 
        \left(|\nabla u|_2^2 +\alpha^2|A  u|_2^2 \right) \dd s\right]
   \leq c\int_\sH\left(|x|_2^2+\alpha^2|\nabla x|_2^2\right)\mu_{0}(\dd x)
   \end{split}
\end{equation}
hold.
By the Yamada-Watanabe theorem for SPDEs (see, for instance, \cite{RSZ08,Tappe13}), it is sufficient to show pathwise uniqueness of the solution.
Set $w=u(t,x)-v(t,x)$, where $(u,W), (v,W)$ are two solutions with same initial value $x\in \sV$.
Then, $w$ satisfies the equation
\[
      \frac{d}{dt}w +\nu Aw+(I+\alpha^2 A)^{-1}\left(\widetilde{B}(u,w+\alpha^2Aw)+\widetilde{B}(w,v+\alpha^2Av)\right)=0.
\]
We deduce 
\[
      \frac12 \frac{d}{dt} (|w|_2^2+\alpha^2|\nabla w|_2^2) =-\nu(|\nabla w|_2^2+\alpha^2|A w|_2^2) - \la \widetilde{B}(u,w+\alpha^2Aw),w\ra 
\]
since $\la w,\widetilde{B}(w,v+\alpha^2Av)\ra = 0$.
Notice that this last equality  needs    $w\widetilde{B}(w,v+\alpha^2Av)\in L^1([0,T]\times \qspace)$.
By  Proposition \ref{prop.1.1} and classic inequalities, we get
\begin{equation}\label{eq.Bzero}
\begin{split}
  \int_0^T \la w, \widetilde{B}(w,v+\alpha^2Av)\ra \dd t & \leq  c\int_0^T | w|_\sV|Aw|_2(|v|_2+\alpha^2|Av|_2)\dd t    \\
  &\leq   c\int_0^T\left(|w|_\sV^2|Aw|_2^2 + (|v|_2^2+\alpha^4|Av|_2^2)    \right) \dd t \\
  &\leq  c\sup_{t\in[0,T]}(|u|_\sV^2+|v|_\sV^2) \int_0^T(|Au|_2^2+|Av|_2^2)\dd t \\
  &\qquad + c\int_0^T\left (|v|_2^2+\alpha^4|Av|_2^2)    \right) \dd t  
\end{split}
\end{equation}
Here $c>0$ is some real constant which depends only on $T$ and $\qspace$ and can change line by line.
By Lemma \eqref{eq.estimate.mu} we have that if $u$, $v$ are solutions of \eqref{eq.aNSabstract} such that $u(0)=v(0)$ $\Prb$-a.s. 
and with initial distribution $\mu_{0}$, then there exists $c>0$ such that
\[
   \E\left[\sup_{t\in[0,T]}(|u|_\sV^2 +|v|_\sV^2)+\int_0^T(|Au|_2^2+|Av|_2^2)\dd t \right] \leq c\int_{\sH}(|x|_2^2 +\alpha^2|\nabla x|_2^2)\mu_{0}(\dd x)
\]
This implies that the last term in \eqref{eq.Bzero} is $\Prb$-a.s. finite and then $\la w,\widetilde{B}(w,v+\alpha^2Av)\ra $ is integrable and vanishes $\Prb$-a.s.

By Proposition \ref{prop.1.1} and Young inequality
\begin{eqnarray*}
  \left|\la \widetilde{B}(u,w+\alpha^2Aw),w\ra \right| &\leq& \left|\la \widetilde{B}(u,w),w\ra\right|+\alpha^2\left| \la \widetilde{B}(u,Aw),w\ra\right| \\
    &\leq & c|u|_V |w|_V^2+c\alpha^2|Au|_2|Aw|_2|w|_V \\
    &\leq & \frac{\nu}{2} |w|_\sV^2 + \frac{\nu\alpha^2}{2} |Aw|_2^2+ c\left(|u|_\sV^2+\alpha^2|Au|_2^2  \right) |w|_\sV^2 
\end{eqnarray*}
Then,
\[
     \frac{d}{dt} (|w|_2^2+\alpha^2|\nabla w|_2^2) +\nu(|\nabla w|_2^2+\alpha^2|A w|_2^2) \leq   c\left(|u|_\sV^2+\alpha^2|Au|_2^2  \right) |w|_\sV^2. 
\]
Since the quantity $ \int_0^T \left(|u(s)|_\sV^2+\alpha^2|Au(s)|_2^2  \right)ds$ is $\Prb$-a.s. bounded
(see \eqref{eq.estimate.mu}),
by Gronwall lemma we deduce that $\Prb$-a.s.
\[
  |w(t)|_2^2+\alpha^2|\nabla w(t)|_2^2\leq 0
\]
which implies $u(t,x)=v(t,x)$ for all $t\geq0$.
\end{proof}

%
%
%
%
%
\section{Invariant measure}\label{sec:invariant}
\subsection{A priori estimates}\label{subsec:derivative}

The aim of this section is to understand how the solutions $u_m$ of equation \eqref{eq.approx} depend on the initial data.
Then, we shall obtain suitable estimates on the derivative of the solution with respect to the starting point $x$. 
To do this,  the Gateaux derivative  $u_m$ with respect to the initial datum $x$ alongside the direction $h$ is denoted by $\eta_m^h(t,x) = Du_m(t,x)\cdot h$. 
It is well known that $\eta_m^h(t,x) $ is solution of the ordinary differential equation with random coefficients
\begin{multline}\label{eq.eta}
 \begin{cases}
    \frac{d}{dt} \eta_m^h(t,x) +\nu A\eta_m^h(t,x) + (I+\alpha^2A)^{-1}\widetilde{B}_m(\eta_m^h(t,x),u_m(t,x)+\alpha^2 Au_m(t,x))
   \\ \qquad+(I+\alpha^2A)^{-1}\widetilde{B}_m(u_m(t,x),\eta_m^h(t,x)+\alpha^2A\eta_m^h(t,x))=0\\
  \eta_m^h(0,x) = P_m h\hfill
   \end{cases}
\end{multline}


\begin{Prop} \label{prop.bound.AX}
Let $u_m(t),t\geq0$ be the solution of \eqref{eq.approx} starting by $x \in \sV$.
Then,  there exists $c>0$ depending on $Q$, $\qspace$ such that for any $t\geq0$, $\varepsilon > 0$
\\
\begin{multline*}
  \E\left[\exp\left(\varepsilon\left(\frac12|u_m(t)|_2^{2} + \frac{\alpha^{2}}{2}|\nabla u_m(t)|_2^{2}+\nu\int_0^t\left(|\nabla u_m(s)|_2^2+\alpha^2|Au_m(s)|_2^{2}\right)\dd s\right)\right)\right] \\
  \leq\exp\left(  \frac{\varepsilon}{2}\left(|x|_2^{2} + \alpha^{2}|\nabla x|_2^{2}\right)
  \left(1+\varepsilon\Tr[Q^*Q]t\right)\right)\\
 \times\exp\left(\frac{\varepsilon}{2}\Tr[Q^*(I+\alpha^2A)Q]
     \left(t +\frac{\varepsilon}{2}\Tr[Q^*(I+\alpha^2A)Q]t^2\right)\right)
\end{multline*}


\end{Prop}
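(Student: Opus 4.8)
The plan is to apply the Itô formula to the exponential functional $g_\varepsilon(u_m(t)) \exp\!\big(\tfrac{\varepsilon\nu}{1}\int_0^t(|\nabla u_m|_2^2+\alpha^2|Au_m|_2^2)\,\dd s\big)$ where $g_\varepsilon(x)=\exp\big(\tfrac{\varepsilon}{2}(|x|_2^2+\alpha^2|\nabla x|_2^2)\big)$, using the stopping times $\tau_n^N$ of \eqref{eq.stopping} to justify the computation and the integrability of the resulting local martingale, exactly as in the proof of Proposition \ref{prop.momexp}. First I would recall from the energy identity underlying \eqref{eq.boundsF} (the $k=1$ Itô computation) that, writing $\mathcal{G}(t)=\tfrac12|u_m(t)|_2^2+\tfrac{\alpha^2}{2}|\nabla u_m(t)|_2^2$,
\[
 \dd\, \mathcal{G}(t) = -\nu\big(|\nabla u_m|_2^2+\alpha^2|Au_m|_2^2\big)\dd t + \tfrac12\Tr[(P_mQ)^*(I+\alpha^2A)(P_mQ)]\dd t + \la (I+\alpha^2A)u_m, (P_mQ)\dd W(t)\ra,
\]
the nonlinear term dropping out by Proposition \ref{prop.1.1}(ii). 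Then the quantity in the exponent of the left-hand side is exactly $\varepsilon\big(\mathcal{G}(t)+\nu\int_0^t(|\nabla u_m|_2^2+\alpha^2|Au_m|_2^2)\dd s\big)$, and the diffusion and drift terms from the viscous dissipation cancel against the time integral, so that the Itô formula for $Z(t):=\exp\big(\varepsilon\mathcal{G}(t)+\varepsilon\nu\int_0^t(\cdots)\dd s\big)$ gives
\[
 \dd Z(t) = \varepsilon Z(t)\la (I+\alpha^2A)u_m,(P_mQ)\dd W(t)\ra + \varepsilon Z(t)\Big(\tfrac12\Tr[Q^*(I+\alpha^2A)Q] + \tfrac{\varepsilon}{2}\,|Q^*(I+\alpha^2A)u_m|_2^2\Big)\dd t,
\]
(bounding $\Tr[(P_mQ)^*(I+\alpha^2A)(P_mQ)]\le\Tr[Q^*(I+\alpha^2A)Q]$).

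Next, the plan is \emph{not} to absorb the $|Q^*(I+\alpha^2A)u_m|_2^2$ term into the dissipation (as was done in Proposition \ref{prop.momexp}) but to keep it, bound it by \eqref{eq.QAbounded} as $\le \Tr[Q^*(I+\alpha^2A)Q]\,(|u_m|_2^2+\alpha^2|\nabla u_m|_2^2) = 2\Tr[Q^*(I+\alpha^2A)Q]\,\mathcal{G}(t)$, and then use $\mathcal{G}(t)\le \tfrac1\varepsilon\log Z(t)$ together with the elementary bound $x\le e^x$ — or more directly, bound $\mathcal{G}(t)\,Z(t) = \mathcal{G}(t)e^{\varepsilon\mathcal{G}(t)}e^{\varepsilon\nu\int_0^t(\cdots)} $. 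This is where one must be a little careful: the cleanest route is a comparison argument on $\varphi(t):=\E[Z(t\wedge\tau_n^N)]$, using that after taking expectations (the martingale term vanishing on $[0,t\wedge\tau_n^N]$ by the stopping-time bound, just as in Proposition \ref{prop.momexp}) one gets
\[
 \varphi'(t)\le \varepsilon\,\E\Big[Z(t\wedge\tau_n^N)\big(\tfrac12 C_Q + \varepsilon C_Q\,\mathcal{G}(t\wedge\tau_n^N)\big)\Big],\qquad C_Q:=\Tr[Q^*(I+\alpha^2A)Q].
\]
The $\mathcal{G}\,Z$ term is the obstacle; I expect to handle it by noting $\mathcal{G}(t)Z(t)\le \tfrac{d}{d\varepsilon}$ of something, or more concretely by a change of variable reducing to the scalar ODE $y'=\varepsilon(\tfrac12 C_Q + \varepsilon C_Q\, h)y$ along the (random) path $h(t)=\mathcal{G}(t\wedge\tau_n^N)$, and then using the a priori bound on $\E[\mathcal{G}]$ from Corollary \ref{coro.thight} together with the explicit structure of the claimed right-hand side, which is visibly of the form $\exp(\tfrac\varepsilon2(|x|_2^2+\alpha^2|\nabla x|_2^2)(1+\varepsilon C_Q' t))\exp(\tfrac\varepsilon2 C_Q(t+\tfrac\varepsilon2 C_Q t^2))$ — i.e. a Gaussian-type moment generating function reflecting that $u_m$ behaves like its initial value plus an Ornstein–Uhlenbeck-type Gaussian part whose variance grows linearly in $t$.

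The main obstacle, then, is the self-referential term $\varepsilon C_Q \mathcal{G}(t) Z(t)$ in the drift: a naive Gronwall does not close because $Z$ already contains $e^{\varepsilon\mathcal{G}}$. The resolution I would pursue is to prove the bound first for the linearized (Ornstein–Uhlenbeck) dynamics $\dd v = -\nu Av\,\dd t + (I+\alpha^2A)^{-1}\,d$(drift)$\,+\,P_mQ\,dW$ dropping the nonlinearity — for which $\mathcal{G}(v(t))$ is an explicit Gaussian quadratic functional and the moment generating function is computed exactly, yielding precisely the claimed product of exponentials — and then to transfer the bound to $u_m$ by observing that the nonlinear term contributes nothing to the energy balance for $\mathcal{G}$ (Proposition \ref{prop.1.1}(ii)) and only \emph{helps} the dissipation term, so a comparison principle for the relevant (scalar, after conditioning) quantity gives the inequality in the stated direction. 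Finally I would let $N\to\infty$ (so $\tau_n^N\to\infty$ $\Prb$-a.s., using Theorem \ref{thm.moments} to control the pre-limit quantities) and invoke Fatou's lemma to pass to the limit, obtaining the assertion for every $t\ge 0$ and every $n=m\in\N$.
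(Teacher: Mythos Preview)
Your It\^o computation and the observation that the dissipative drift cancels against the added time integral are correct, and you reach the right SDE for $Z$. From there, however, the plan has a genuine gap. The proposed comparison with the Ornstein--Uhlenbeck flow does not go through: although the nonlinearity drops from the energy \emph{balance}, the stochastic integral $\int_0^t\la (I+\alpha^2A)u_m,(P_mQ)\dd W\ra$ still carries the full nonlinear solution $u_m$ in its integrand, so its quadratic variation is neither equal to nor dominated by that of the OU analogue. There is no pathwise or law comparison between the two $Z$'s, and your ``only helps the dissipation'' heuristic does not translate into an inequality for the exponential moment. Likewise, the differential inequality $\varphi'(t)\le \varepsilon\,\E[Z(\tfrac12 C_Q+\varepsilon C_Q\mathcal G)]$ is self-referential (since $Z$ already contains $e^{\varepsilon\mathcal G}$) and cannot be closed by Gronwall.

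The paper's proof avoids this entirely by a much shorter route that you almost wrote down but then abandoned. Instead of applying It\^o to $Z$, it simply integrates the energy identity $\dd\mathcal G = -\nu(|\nabla u_m|_2^2+\alpha^2|Au_m|_2^2)\dd t+\tfrac12 C_Q\dd t+\dd M_t$ to rewrite the \emph{exponent} on the left-hand side pathwise as
\[
\tfrac{\varepsilon}{2}\big(|P_mx|_2^2+\alpha^2|\nabla P_mx|_2^2\big)+\tfrac{\varepsilon}{2}\Tr[(P_mQ)^*(I+\alpha^2A)(P_mQ)]\,t+\varepsilon M_t,
\]
with $M_t=\int_0^t\la u_m,(I+\alpha^2A)(P_mQ)\dd W\ra$. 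Thus $Z(t)$ factors as a deterministic exponential times $e^{\varepsilon M_t}$, and everything reduces to estimating $\E[e^{\varepsilon M_t}]$. The paper bounds this by $\exp\big(\tfrac{\varepsilon^2}{2}\int_0^t\E[|Q^*(I+\alpha^2A)u_m(s)|_2^2]\dd s\big)$, applies \eqref{eq.QAbounded}, and then uses the \emph{linear} a priori estimate of Corollary~\ref{coro.thight} to bound $\int_0^t\E[|u_m(s)|_2^2+\alpha^2|\nabla u_m(s)|_2^2]\dd s$ by $(|x|_2^2+\alpha^2|\nabla x|_2^2)t+\tfrac12\Tr[Q^*(I+\alpha^2A)Q]\,t^2$. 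This is exactly what produces the quadratic-in-$t$ exponent on the right-hand side of the statement. Your proposal never invokes Corollary~\ref{coro.thight}; that already-proved $L^2$ bound on $u_m$ is the missing ingredient that closes the argument without any self-referential step.
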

\begin{proof}
By Itô formula we have
\begin{align*}
\frac12|u_m(t)|_2^{2} + \frac{\alpha^{2}}{2}|\nabla u_m(t)|_2^{2} &+  \nu\int_{0}^{t}(|\nabla u_m (s)|_2^{2} + \alpha^{2}|Au_m(s)|_2^{2})ds 
\\
&=\frac12|P_mx|_2^{2} + \frac{\alpha^{2}}{2}|\nabla P_mx|_2^{2} +\int_{0}^{t}\langle u_m, (Id+\alpha^{2} A)QdW_{s}\rangle\\
& +  \frac12\Tr[(P_mQ)^*(I+\alpha^2A)P_mQ]t
\end{align*}
and for $\varepsilon >0$
\begin{align*}
&\exp\left(\varepsilon\left(\frac12|u_m(t)|_2^{2} + \frac{\alpha^{2}}{2}|\nabla u_m(t)|_2^{2}+\nu\int_0^t\left(|\nabla u_m(s)|_2^2+\alpha^2|Au_m(s)|_2^{2}\right)\dd s\right)\right)\\
  & \qquad = \exp \left(\frac{\varepsilon}{2}\left( |P_mx|_2^{2} + \alpha^{2}|\nabla P_mx|_2^{2}\right) +\varepsilon \int_{0}^{t}\langle u_m, (Id+\alpha^{2} A)QdW_{s}\rangle+  \frac{\varepsilon}{2} \Tr[(P_mQ)^*(I+\alpha^2A)P_mQ]t\right)\\
  & \qquad\leq \exp\left( \frac{\varepsilon}{2}\left(|x|_2^{2} + \alpha^{2}|\nabla x|_2^{2}\right)
    + \varepsilon\int_{0}^{t}\langle u_m(s), (Id+\alpha^{2} A)QdW_{s}\rangle   +      \frac{\varepsilon}{2}\Tr[Q^*(I+\alpha^2A)Q]t\right)
\end{align*}
By taking expectation and taking into account \eqref{eq.QAbounded} we obtain
\begin{align*}
 &  \E\left[\exp\left(\varepsilon\left(\frac12|u_m(t)|_2^{2} + \frac{\alpha^{2}}{2}|\nabla u_m(t)|_2^{2}+\nu\int_0^t\left(|\nabla u_m(s)|_2^2+\alpha^2|Au_m(s)|_2^{2}\right)\dd s\right)\right)\right]\\
 & \qquad \leq  \exp\left(\frac{\varepsilon}{2}\left( |x|_2^{2} + \alpha^{2}|\nabla x|_2^{2}\right)+     { \frac{\varepsilon}{2}\Tr[Q^*(I+\alpha^2A)Q]t} \right)\\
 & \qquad \qquad   \times \E\left[\exp\left( \varepsilon\int_{0}^{t}\langle u_m(s), (I+\alpha^{2} A)QdW_{s}\rangle\right)\right]\\
 & \qquad =   \exp\left(\frac{\varepsilon}{2}\left( |x|_2^{2} + \alpha^{2}|\nabla x|_2^{2}\right)+     { \frac{\varepsilon}{2}\Tr[Q^*(I+\alpha^2A)Q]t} \right)\\
 & \qquad \qquad 
    \times\exp\left(\frac{\varepsilon^2}{2} \int_{0}^{t}\E[ |Q^*(I+\alpha^{2} A) u_m(s)|_2^{2}]ds\right)\\
    &  \qquad \leq \exp\left(\frac{\varepsilon}{2}\left( |x|_2^{2} + \alpha^{2}|\nabla x|_2^{2}\right)+     { \frac{\varepsilon}{2}\Tr[Q^*(I+\alpha^2A)Q]t} \right)\\
 & \qquad \qquad 
      \times\exp\left(  \frac{\varepsilon^2}{2}\Tr[Q^*(I+\alpha^2A)Q]\int_{0}^{t}\E\left[ |u_m(s)|_2^{2}+ \alpha^2|\nabla u_m(s)|_2^{2}\right]ds\right)
\end{align*}
By using  Corollary \ref{coro.thight} we deduce that the last term is bounded by
\begin{align*}
 &  \qquad \exp\left(\frac{\varepsilon}{2}\left( |x|_2^{2} + \alpha^{2}|\nabla x|_2^{2}\right)+     { \frac{\varepsilon}{2}\Tr[Q^*(I+\alpha^2A)Q]t} \right)\\
 & \qquad \qquad 
      \times\exp\left(   \frac{\varepsilon^2}{2}\Tr[Q^*(I+\alpha^2A)Q]\left(\int_0^t(|x|_2^{2} + \alpha^{2}|\nabla x|_2^{2}+\Tr[Q^*(I+\alpha^2A)Q]s)\dd s \right)\right)\\
 & \qquad =  \exp\left(\frac{\varepsilon}{2}\left( |x|_2^{2} + \alpha^{2}|\nabla x|_2^{2}\right)+      \frac{\varepsilon}{2}\Tr[Q^*(I+\alpha^2A)Q]t \right)\\
 & \qquad \qquad 
                \times\exp\left( \frac{\varepsilon^2}{2}\Tr[Q^*(I+\alpha^2A)Q] 
                   \left((|x|_2^{2} + \alpha^{2}|\nabla x|_2^{2})t+\Tr[Q^*(I+\alpha^2A)Q]\frac{t^2}{2}\right)\right)
 \end{align*}
which implies the desired result.
\end{proof}

\begin{Prop} \label{prop.Aetah}
 There exists $c>0$ and a continuous function $\Gamma:\R^2\to \R$ such that for any  $x\in \sV$, $h\in D(A)$, $m\in \N$, $t>0$ it holds
\begin{multline*}
 |\nabla\eta_m^h(t,x)|_2^2+\alpha^2|A\eta_m^h(t,x)|_2^2+\int_0^t e^{c\int_0^s|Au_m(\tau,x)|_2^2\dd \tau} \left(|A\eta_m^h(s,x)|_2^2+\alpha^2|A^{3/2}\eta_m^h(s,x)|_2^2 \right)\dd s \\
    \leq
     \Gamma(t,|x|_2^2+\alpha^2|\nabla x|_2^2) \left(|\nabla h|_2^2+\alpha^2 |A h|_2^2\right)
\end{multline*}
\end{Prop}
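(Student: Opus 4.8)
The plan is to derive a Gronwall-type differential inequality for
\[
  \Phi(t):=|\nabla\eta_m^h(t,x)|_2^2+\alpha^2|A\eta_m^h(t,x)|_2^2 ,
\]
with a coefficient controlled by $|Au_m(t,x)|_2^2$, and then to conclude with the integrating-factor form of Gronwall's lemma — the exponential weight in the statement being exactly that integrating factor. Write $\eta=\eta_m^h(\cdot,x)$ and $z=u_m(\cdot,x)$; since the whole computation takes place in the finite-dimensional Galerkin space $P_m\sH$, every term below is smooth and all manipulations are licit.

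First I would test equation \eqref{eq.eta} with $(I+\alpha^2A)A\eta$ in $\sH$. Using $\langle(I+\alpha^2A)A\eta,\dot\eta\rangle=\tfrac12\tfrac{d}{dt}\Phi$ and $\langle(I+\alpha^2A)A\eta,\nu A\eta\rangle=\nu D(t)$ with $D(t):=|A\eta|_2^2+\alpha^2|A^{3/2}\eta|_2^2$, while the operator $(I+\alpha^2A)^{-1}$ in front of the bilinear terms cancels against $(I+\alpha^2A)$, this produces
\[
  \tfrac12\tfrac{d}{dt}\Phi+\nu D=-\langle A\eta,\widetilde B_m(\eta,z+\alpha^2Az)\rangle-\langle A\eta,\widetilde B_m(z,\eta+\alpha^2A\eta)\rangle .
\]

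The technical heart — and the step I expect to be the main obstacle — is bounding the four resulting trilinear terms so that every derivative of order higher than those present in $\Phi$ is absorbed into $\nu D$. I would split each $\widetilde B$-term into its $v$-part and its $\alpha^2Av$-part. The milder pieces are handled directly via Proposition \ref{prop.1.1}(iii),(v),(vi) and Young's inequality. The two genuinely singular pieces, $\alpha^2\langle A\eta,\widetilde B_m(z,A\eta)\rangle$ and $\alpha^2\langle A\eta,\widetilde B_m(\eta,Az)\rangle$, I would first rewrite, using the identities of Proposition \ref{prop.1.1}(ii) together with integration by parts and $\Div\eta=\Div z=0$, as trilinear forms $b(\cdot,\cdot,\cdot)$ on which a Ladyzhenskaya/Agmon interpolation (consistent with the estimates already recorded in Proposition \ref{prop.1.1}) combined with Young's inequality gives a bound of the form $\varepsilon\,\alpha^2|A^{3/2}\eta|_2^2+c_\varepsilon(1+|Az|_2^2)\Phi$. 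Using also the Poincaré inequalities $|\eta|_\sV\le\lambda_1^{-1/2}|A\eta|_2$ and $|\nabla z|_2\le\lambda_1^{-1/2}|Az|_2$ and choosing $\varepsilon$ small, the entire right-hand side is bounded by $\tfrac\nu2 D+c\,(1+|Az|_2^2)\Phi$, whence
\[
  \tfrac{d}{dt}\Phi+\nu D\le g(t)\,\Phi,\qquad g(t):=2c\,\bigl(1+|Au_m(t,x)|_2^2\bigr),
\]
with $c=c(\nu,\alpha,\qspace)>0$; note that $\int_0^tg<\infty$ almost surely (indeed in every $\sL^p$) by Theorem \ref{thm.moments}.

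Finally I would multiply by the integrating factor $e^{-\int_0^tg(s)\dd s}$, so that $\tfrac{d}{dt}\bigl(e^{-\int_0^tg}\Phi\bigr)+\nu e^{-\int_0^tg}D\le0$, and integrate on $[0,t]$, using $\Phi(0)=|\nabla P_mh|_2^2+\alpha^2|AP_mh|_2^2\le|\nabla h|_2^2+\alpha^2|Ah|_2^2$, to obtain
\[
  e^{-\int_0^tg}\Phi(t)+\nu\int_0^te^{-\int_0^sg}D(s)\dd s\le|\nabla h|_2^2+\alpha^2|Ah|_2^2 .
\]
Multiplying back by $e^{\int_0^tg}$, separating the deterministic factor $e^{2ct}$ from $e^{2c\int_0^t|Au_m(\tau,x)|_2^2\dd\tau}$, and invoking the a priori exponential-moment estimates of Section \ref{sec:expo} (in particular Proposition \ref{prop.bound.AX}) to organise the remaining dependence on $t$ and on $|x|_2^2+\alpha^2|\nabla x|_2^2$ into a continuous function $\Gamma$, one recovers the asserted inequality with the weight $e^{c\int_0^s|Au_m(\tau,x)|_2^2\dd\tau}$ on the dissipation integral. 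The Gronwall and integrating-factor manipulations are routine; the only delicate point is the nonlinear bookkeeping in the middle step, and in particular checking that the coefficient produced there is no worse than $1+|Au_m|_2^2$ (and not, say, $|Au_m|_2^4$ or a term involving $A^{3/2}u_m$), which is what forces the specific groupings and the choice of which estimate of Proposition \ref{prop.1.1} to apply to each piece.
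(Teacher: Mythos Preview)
Your approach is essentially the paper's: test \eqref{eq.eta} with $(I+\alpha^2A)A\eta$, bound the four trilinear terms so as to feed a Gronwall inequality with coefficient proportional to $|Au_m|_2^2$, and then conclude via Proposition~\ref{prop.bound.AX}. The paper's handling of what you call the ``singular'' pieces is actually simpler than you anticipate: all four terms are bounded directly from the estimates already packaged in Proposition~\ref{prop.1.1} (for instance part (v) yields $\alpha^2|\langle\widetilde B_m(u_m,A\eta),A\eta\rangle|\le c\alpha^2|Au_m|_2|A\eta|_2|A^{3/2}\eta|_2$ and similarly for $\alpha^2\langle\widetilde B_m(\eta,Au_m),A\eta\rangle$), so no additional integration by parts or interpolation is needed and the Gronwall coefficient comes out as $c|Au_m|_2^2$ without your extra ``$+1$''.
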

\begin{proof}
 By multilplying both sides of \eqref{eq.eta} by  $A(I+\alpha^2A)\eta_m^h(t,x)$ and integrating over $\Uset $ we get
\begin{eqnarray} 
  &&\frac12 \frac{d}{dt} \left(|\nabla\eta_m^h(t,x)|_2^2+\alpha^2|A\eta_m^h(t,x)|_2^2\right)+|A\eta_m^h(t,x)|_2^2+\alpha^2|A^{3/2}\eta_m^h(t,x)|_2^2= \notag \\
  &&\quad   =  \la \widetilde{B}_m(u_m(t,x),\eta_m^h(t,x)+\alpha^2A\eta_m^h(t,x))+\widetilde{B}_m(\eta_m^h(t,x),u_m(t,x),+\alpha^2A u_m(t,x)),A\eta_m^h(t,x) \ra\notag\\
  &&\quad    =\la \widetilde{B}_m(u_m(t,x),\eta_m^h(t,x),A\eta_m^h(t,x) \ra+\alpha^2\la \widetilde{B}_m(u_m(t,x),A\eta_m^h(t,x),A\eta_m^h(t,x) \ra\notag\\
  &&\qquad    + \la \widetilde{B}_m(\eta_m^h(t,x),u_m(t,x),A\eta_m^h(t,x) \ra      + \alpha^2\la \widetilde{B}_m(\eta_m^h(t,x),Au_m(t,x),A\eta_m^h(t,x) \ra   \label{eq.Aeta}
\end{eqnarray}
By Proposition \ref{prop.1.1} and Young inequality we get that for some $c>0$ 

\begin{multline*}
 \la \widetilde{B}_m(u_m(t,x),\eta_m^h(t,x)),A\eta_m^h(t,x) \ra \leq c|Au_m(t,x)|_2|\nabla\eta_m^h(t,x)|_2|A\eta_m^h(t,x)|_2 \\
     \leq c|Au_m(t,x)|_2^2|\nabla\eta_m^h(t,x)|_2^2+\frac12|A\eta_m^h(t,x)|_2^2;
\end{multline*}
\begin{multline*}
  \alpha^2\la \widetilde{B}_m(u_m(t,x),A\eta_m^h(t,x)),A\eta_m^h(t,x) \ra \leq c|Au_m(t,x)|_2|A\eta_m^h(t,x)|_2|A^{3/2}\eta_m^h(t,x)|_2 \\
     \leq c\alpha^2|Au_m(t,x)|_2^2|A\eta_m^h(t,x)|_2^2+\frac{\alpha^2}{2}|A^{3/2}\eta_m^h(t,x)|_2^2;
\end{multline*}
\begin{multline*}
 \la \widetilde{B}_m(\eta_m^h(t,x),u_m(t,x)),A\eta_m^h(t,x) \ra  \leq c|Au_m(t,x)|_2|\nabla\eta_m^h(t,x)|_2|A\eta_m^h(t,x)|_2 \\
     \leq c|Au_m(t,x)|_2^2|\nabla\eta_m^h(t,x)|_2^2+\frac12|A\eta_m^h(t,x)|_2^2;
\end{multline*}
\begin{multline*}
  \alpha^2\la \widetilde{B}_m(\eta_m^h(t,x),Au_m(t,x)),A\eta_m^h(t,x) \ra    \leq c\alpha^2 |Au_m(t,x)|_2|A\eta_m^h(t,x)|_2|A^{3/2}\eta_m^h(t,x)|_2 \\
     \leq c\alpha^2|Au_m(t,x)|_2^2|A\eta_m^h(t,x)|_2^2+\frac{\alpha^2}{2}|A^{3/2}\eta_m^h(t,x)|_2^2.
\end{multline*}
Then,  the right-hand side of \eqref{eq.Aeta} is bounded by
\[
   c|Au_m(t,x)|_2^2\left(|\nabla \eta_m^h(t,x)|_2^2+\alpha^2|A\eta_m^h(t,x)|_2^2\right)+\frac12|A\eta_m^h(t,x)|_2^2+\frac{\alpha^2}{2}|A^{3/2}\eta_m^h(t,x)|_2^2
\]
and we get 
\begin{multline*}
 \frac12 \frac{d}{dt} \left(|\nabla\eta_m^h(t,x)|_2^2+\alpha^2|A\eta_m^h(t,x)|_2^2\right)+\frac12|A\eta_m^h(t,x)|_2^2+\frac{\alpha^2}{2}|A^{3/2}\eta_m^h(t,x)|_2^2\\
   \leq c|Au_m(t,x)|_2^2\left(|\nabla \eta_m^h(t,x)|_2^2+\alpha^2|A\eta_m^h(t,x)|_2^2\right).
\end{multline*}
By Gronwall inequality we obtain 
\begin{multline*}
 |\nabla\eta_m^h(t,x)|_2^2+\alpha^2|A\eta_m^h(t,x)|_2^2+\int_0^t e^{c\int_0^s|Au_m(\tau,x)|_2^2\dd \tau}\left(|A\eta_m^h(s,x)|_2^2+\alpha^2|A^{3/2}\eta_m^h(s,x)|_2^2 \right)\dd s \\
    \leq e^{c\int_0^t|Au_m(s,x)|_2^2\dd s}\left(|\nabla h|_2^2+\alpha^2 |A h|_2^2\right).
\end{multline*}
The result follows by Proposition \ref{prop.bound.AX}.
%
\end{proof}

%
%
%
%
%
\subsection{Proof of Theorem \ref{thm.invmeas}}\label{subsec:invariant}


\begin{Rem}
Following a classic strategy (see  \cite{DaPratoZabczyk:92}), we shall show a  Strong Feller type property  be using  the Bismut-Elworthy formula.
Indeed, to apply it we need $\ker(Q)=\{0\}$ and that $\int_0^T|Q^{-1}\eta_m^h(t,x)|_2^2\dd t$ is defined.
 Since $\int_0^T|A^{3/2}\eta_m^h(t,x)|_2^2 \dd t$ is bounded, in order to have $\int_0^T|Q^{-1}\eta_m^h(t,x)|_2^2\dd t$ defined it is sufficient  to have $D(A^{3/2})\subset D(Q^{-1})$.
 If we set $Q=A^{-\frac{1}{2}(1+\varepsilon)}$ this condition  is fulfilled if $\frac{1}{2}(1+\varepsilon)\leq 3/2$, that is $\varepsilon\leq 2$.
  Moreover, Hypothesis \ref{hyp.trace} reads 
 \[
   \Tr[A^{-\varepsilon}]<\infty.
 \]
That is, $\varepsilon>d/2$, where $d$ is the dimension of $\qspace$.

Then, if $\dim \qspace =2$ or $\dim \qspace=3$, a covariance operator $Q$ of the form $Q=A^{-\frac{1}{2}(1+\varepsilon)}$ %
satisfies the conditions  \ref{hyp.trace} and $D(A^{3/2})\subset D(Q^{-1})$ whenever $\varepsilon\in ]d/2, 2]$.
\end{Rem}
Before giving the proof of Theorem \ref{thm.invmeas}, we need two lemma.

%
%
\begin{Le}
 Under the hypothesis of Theorem \ref{thm.invmeas} for any $\phi\in B_b(\sV;\R)$, $t>0$, $x_0\in \sV$, $r>0$ we have
\[
 \lim_{|h|_{D(A)}\to 0}\sup_{|x-x_0|_\sV<r}|P_t\varphi(x+h)-P_t\varphi(x)| = 0.
\]
 \end{Le}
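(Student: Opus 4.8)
The plan is to establish the stated equicontinuity of $P_t\varphi$ on bounded subsets of $\sV$ via the Bismut--Elworthy--Li formula applied at the Galerkin level, together with the uniform-in-$m$ estimates already available. First I would recall that for the finite-dimensional problem \eqref{eq.approx} with $\varphi_m:=\varphi\circ P_m$ one has the Bismut--Elworthy formula
\[
   D(P_t^m\varphi_m)(x)\cdot h = \frac{1}{t}\,\E\left[\varphi_m(u_m(t,x))\int_0^t\la Q^{-1}\eta_m^h(s,x),\dd W(s)\ra\right],
\]
valid because $Q$ is invertible (Hypothesis \ref{hyp.trace2}) and, by Proposition \ref{prop.Aetah}, $s\mapsto |A^{3/2}\eta_m^h(s,x)|_2^2$ is integrable on $[0,t]$, so that $D(A^{3/2})\subset D(Q^{-1})$ guarantees $\int_0^t|Q^{-1}\eta_m^h(s,x)|_2^2\dd s<\infty$ with the right measurability. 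Taking absolute values and applying Cauchy--Schwarz in $\omega$ gives
\[
  |D(P_t^m\varphi_m)(x)\cdot h|\leq \frac{\|\varphi\|_\infty}{t}\,\E\left[\int_0^t|Q^{-1}\eta_m^h(s,x)|_2^2\dd s\right]^{1/2}.
\]

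The heart of the matter is to bound the right-hand side by $C(t,r,x_0)\,|h|_{D(A)}$ uniformly in $m$ and in $x$ with $|x-x_0|_\sV<r$. Since $D(A^{3/2})\subset D(Q^{-1})$ and these are Hilbert spaces with $Q^{-1}$ closed, the closed graph theorem yields a constant $c_Q$ with $|Q^{-1}y|_2\leq c_Q\|y\|_{D(A^{3/2})}=c_Q|A^{3/2}y|_2$ (up to lower-order terms controlled by Poincar\'e), hence $|Q^{-1}\eta_m^h(s,x)|_2^2\leq c_Q^2(|A^{3/2}\eta_m^h(s,x)|_2^2+|A\eta_m^h(s,x)|_2^2)$. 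By Proposition \ref{prop.Aetah}, dropping the exponential weight $e^{c\int_0^s|Au_m|_2^2}\geq 1$,
\[
  \int_0^t\left(|A\eta_m^h(s,x)|_2^2+\alpha^2|A^{3/2}\eta_m^h(s,x)|_2^2\right)\dd s
   \leq \Gamma\!\left(t,|x|_2^2+\alpha^2|\nabla x|_2^2\right)\left(|\nabla h|_2^2+\alpha^2|Ah|_2^2\right),
\]
and the right-hand side is $\tilde\Prb$-a.s. finite; taking expectation and using that $\Gamma$ is continuous together with the bound $|x|_2^2+\alpha^2|\nabla x|_2^2\leq C(x_0,r)$ for $|x-x_0|_\sV<r$ gives
\[
  \E\left[\int_0^t|Q^{-1}\eta_m^h(s,x)|_2^2\dd s\right]^{1/2}\leq C(t,r,x_0,\alpha)\,|h|_{D(A)},
\]
uniformly in $m$ and in such $x$. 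Here I should be a little careful that $\Gamma$ in Proposition \ref{prop.Aetah} may itself involve an expectation of an exponential of $\int_0^t|Au_m|_2^2$; this is exactly where Proposition \ref{prop.bound.AX} enters, providing the uniform-in-$m$ exponential bound, so that $\E[\Gamma(\cdot)]$ is finite and continuous in $|x|_2^2+\alpha^2|\nabla x|_2^2$.

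Combining the last two displays gives $|D(P_t^m\varphi_m)(x)\cdot h|\leq \frac{\|\varphi\|_\infty}{t}C(t,r,x_0,\alpha)|h|_{D(A)}$ for all $m$, all $|x-x_0|_\sV<r$; integrating along the segment from $x$ to $x+h$ (which stays in a slightly larger ball) yields
\[
  |P_t^m\varphi_m(x+h)-P_t^m\varphi_m(x)|\leq \frac{\|\varphi\|_\infty}{t}\,C(t,2r,x_0,\alpha)\,|h|_{D(A)}
\]
for $|h|_{D(A)}$ small. Finally I would pass to the limit $m\to\infty$: by Theorem \ref{th.compactness} (on the enlarged probability space, which does not change laws) $u_m(t,x)\to u(t,x)$ in $D(A^{-\rho})$ hence, using the $\sV$-bounds and weak continuity, $P_m x\to x$ and $u_m(t,x)\to u(t,x)$ in a topology making $\varphi$ pass to the limit for $\varphi$ continuous and bounded; a standard monotone-class/approximation argument extends this from bounded continuous $\varphi$ to $\varphi\in B_b(\sV;\R)$ as in the statement. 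This gives $P_t\varphi(x+h)-P_t\varphi(x)\to 0$ as $|h|_{D(A)}\to 0$, uniformly over $|x-x_0|_\sV<r$, which is the claim. The main obstacle is the uniformity in $m$ of the estimate for $\E[\int_0^t|Q^{-1}\eta_m^h|_2^2]$: it requires that the function $\Gamma$ from Proposition \ref{prop.Aetah}, once expectation over the random coefficients is taken, stays finite and locally bounded in the initial data, which is precisely why Proposition \ref{prop.bound.AX} (exponential control of $\int_0^t|Au_m|_2^2$) was proved, and care is needed that the constants there do not degenerate as $m\to\infty$.
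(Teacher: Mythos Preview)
Your proposal is correct and follows essentially the same route as the paper: Bismut--Elworthy at the Galerkin level, the bound on $\E\int_0^t|Q^{-1}\eta_m^h|_2^2\,\dd s$ via $D(A^{3/2})\subset D(Q^{-1})$ and Proposition~\ref{prop.Aetah} (with Proposition~\ref{prop.bound.AX} behind $\Gamma$), integration along the segment $[x,x+h]$, then the passage $m\to\infty$ for continuous $\varphi$ followed by a monotone-class extension to $B_b$. If anything, you are slightly more explicit than the paper about the closed-graph argument for $Q^{-1}A^{-3/2}$ and about why the constant $\Gamma$ stays uniform in $m$.
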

%
%
\begin{proof}
Let us begin with a function $\phi\in C_b(\sV,\R)$. 
Then, we shall extend the results to Borel and bounded functions.
For any $m\in \N^*$ the Bismuth-Elworthy formula yields 
\[ 
DP_t^{m} \phi (x)\cdot h = 
  \frac1t \E\left[\phi(u_{m}(t,x)) \int_{0}^{t} \langle Q^{-1}\eta^{h}_{m}(s,x), \dd W_{s}\rangle_K\right]
\]
Using Proposition \ref{prop.Aetah} the last term is bounded by
%
\begin{eqnarray*}
|DP_t^{m}  \phi (x)\cdot h | 
&\leq&  \frac1t  \|\phi\|_{\infty} \E\left[\int_{0}^{t} |Q^{-1}\eta^{h}_{m}(s,x)|_K^{2} \dd s\right]
\\
&\leq& \frac1t  \|\phi\|_{\infty} C_Q\left(\E\left[\int_{0}^{t} |A^{3/2}\eta^{h}_{m}(s,x)|^{2} \dd s\right]\right)^{\frac12}\\
&\leq& \frac{1}{\alpha t}  \|\phi\|_{\infty} C_Q  \Gamma(t,|x|_2^{2} + \alpha^{2}|\nabla x|_2^{2})
     \left(|\nabla h|_2^2+\alpha^2 |A h|_2^2\right)^{\frac12}.
\end{eqnarray*}
where $\Gamma:\R^2\to \R$ is a suitable continuous function.
Here, $Q:K\to\sH$ and $C_Q=\|Q^{-1}A^{-\frac32}\|_{\mathcal{L}(\sH,K)}$.\\
Then, since $P_t^m\phi(x)\to P_t\phi(x)$ as $m\to\infty$, 
\begin{eqnarray*}
  |P_t\phi(x+h)-P_t\phi(x)|&=&\lim_{m\to\infty}|P_t^m\phi(x+h)-P_t^m\phi(x)|\\
  &\leq&\lim_{m\to\infty}\left| \int_0^1   DP_t^{m} \phi (x+\theta h)\cdot h \dd \theta\right|\\
  &\leq& \frac{1}{\alpha t}  \|\phi\|_{\infty} C_Q  
  \sup_{\theta\in[0,1]}  \left\{
      \Gamma\left(t, |x+\theta h|_2^{2} + \alpha^{2}|\nabla (x+\theta h)|_2^{2}\right)  \right\}  \left(|\nabla h|_2^2+\alpha^2 |A h|_2^2\right)^{\frac12} 
\end{eqnarray*}
By approximating a Borel and bounded function by continous functions, we deduce that the previous estimate holds also for $\phi$ Borel and bounded.
We deduce that for a fixed $t>0$,
\[
  |P_t\phi(x+h)-P_t\phi(x)|\to 0
\]
 uniformly in any bounded set of $\sV$, as $h\to 0$ in $D(A)$.
\end{proof}

The next result  concern the irriducibility of the semigroup $P_t$.

\begin{Le}
 Under the hypothesis of Theorem \ref{thm.invmeas}  for any $\delta>0$, $x\in \sV$ there exists $T>0$ such that $\Prb(|u(T,x)|_2>\delta)<1$. 
\end{Le}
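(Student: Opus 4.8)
The statement is equivalent to $\Prb(|u(T,x)|_2\le\delta)>0$, so it is enough to exhibit a single time $T>0$ and a single event of positive probability on which $|u(T,x)|_2\le\delta$. The plan is the standard additive-noise irreducibility scheme: on a long interval on which the driving Wiener process happens to be small, the solution stays close to the \emph{unforced} $\alpha$-Navier--Stokes flow started at $x$, which relaxes exponentially to $0$; and the event that the noise is small carries positive probability because the stochastic convolution is a centred Gaussian process in a suitable path space.

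\emph{Decay of the unforced flow.} Let $y$ solve $\dot y+\nu Ay+(I+\alpha^2A)^{-1}\widetilde B(y,y+\alpha^2Ay)=0$ with $y(0)=x$ (global existence being the particular case $Q=0$ of the results above). Testing against $(I+\alpha^2A)y$ and using $\la\widetilde B(y,y+\alpha^2Ay),y\ra=0$ from Proposition \ref{prop.1.1}(ii) gives
\[
   \tfrac12\tfrac{d}{dt}\bigl(|y|_2^2+\alpha^2|\nabla y|_2^2\bigr)+\nu\bigl(|\nabla y|_2^2+\alpha^2|Ay|_2^2\bigr)=0 .
\]
By the Poincaré inequality this yields $|y(t)|_2^2+\alpha^2|\nabla y(t)|_2^2\le e^{-2\nu\lambda_1 t}\bigl(|x|_2^2+\alpha^2|\nabla x|_2^2\bigr)$ and $\int_0^\infty\bigl(|\nabla y(t)|_2^2+|Ay(t)|_2^2\bigr)\,\ddt<\infty$. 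Fix $T$ so large that $|y(T)|_2\le\delta/3$.

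\emph{Splitting and comparison.} Let $z(t)=\int_0^te^{-\nu(t-s)A}Q\,\dd W(s)$, which solves $\dd z+\nu Az\,\ddt=Q\,\dd W$, $z(0)=0$. Hypothesis \ref{hyp.trace} gives $Q\in\mathcal{L}_2(K;\sV)$, hence $\E|Az(t)|_2^2\le(2\nu)^{-1}\Tr[Q^*AQ]$ for all $t$ and, by the factorisation method, $z\in C([0,T];\sV)$ $\Prb$-a.s.; thus $\Prb$-a.s.\ $z$ belongs to the separable Banach space $X:=C([0,T];\sV)\cap\sL^2([0,T];D(A))$. Setting $v:=u(\cdot,x)-z$, the process $v$ solves pathwise $\dot v+\nu Av+(I+\alpha^2A)^{-1}\widetilde B\bigl(v+z,(v+z)+\alpha^2A(v+z)\bigr)=0$, $v(0)=x$. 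Put $\rho:=v-y$: subtracting the equations, testing against $(I+\alpha^2A)\rho$, expanding $u=v+z=y+\rho+z$ by bilinearity of $\widetilde B$, noting that the diagonal terms $\la\widetilde B(\rho,\cdot),\rho\ra$ vanish (Proposition \ref{prop.1.1}(ii)), and bounding every remaining term by the estimates of Proposition \ref{prop.1.1}(i),(iv),(v) together with the antisymmetry of $\widetilde B$ and Young's inequality (to absorb top-order contributions into $\nu(|\nabla\rho|_2^2+\alpha^2|A\rho|_2^2)$), one obtains
\[
  \tfrac{d}{dt}\bigl(|\rho|_2^2+\alpha^2|\nabla\rho|_2^2\bigr)\le\Phi(t)\bigl(|\rho|_2^2+\alpha^2|\nabla\rho|_2^2\bigr)+\Psi(t),
\]
where $\int_0^T\Phi(t)\,\ddt<\infty$ $\Prb$-a.s.\ (it involves $\int_0^T|Au(t,x)|_2^2\,\ddt$, finite by \eqref{eq.aNSCHestimate}, together with $\int_0^T|Ay(t)|_2^2\,\ddt$ and $\sup_{[0,T]}|z(t)|_\sV$), and $\int_0^T\Psi(t)\,\ddt\le C\bigl(\sup_{[0,T]}|z(t)|_\sV^2+\int_0^T|Az(t)|_2^2\,\ddt\bigr)$ with $C$ $\Prb$-a.s.\ finite. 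Since $\rho(0)=0$, Gronwall's lemma gives $\sup_{[0,T]}\bigl(|\rho(t)|_2^2+\alpha^2|\nabla\rho(t)|_2^2\bigr)\le e^{\int_0^T\Phi}\int_0^T\Psi$.

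\emph{Positive probability and the main difficulty.} For $M>0$ let $\Omega_M$ be the event on which $\int_0^T|Au(t,x)|_2^2\,\ddt\le M$ and the random constants above are $\le M$; then $\Prb(\Omega_M)\uparrow1$ as $M\to\infty$. On the other hand, the law of $z$ in $X$ is a centred Gaussian measure, so $0$ lies in its support and $\Prb(\|z\|_X\le\eta)>0$ for every $\eta>0$. Choosing first $M$ with $\Prb(\Omega_M)>1-\tfrac12\Prb(\|z\|_X\le\eta)$ and then $\eta$ small enough (depending on $M$), on the positive-probability event $\Omega_M\cap\{\|z\|_X\le\eta\}$ we get $|\rho(T)|_2\le\delta/3$ and $|z(T)|_2\le\|z\|_X\le\delta/3$, whence $|u(T,x)|_2\le|y(T)|_2+|\rho(T)|_2+|z(T)|_2\le\delta$; this gives $\Prb(|u(T,x)|_2\le\delta)>0$, as required (cf.\ \cite{DaPratoZabczyk:92} for this type of argument). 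The main obstacle is precisely the comparison estimate of the previous paragraph: since $z$ is only $\sV$- and $\sL^2([0,T];D(A))$-regular, each term $\la\widetilde B(\cdot,\cdot),z\ra$ and $\la\widetilde B(z,\cdot),\cdot\ra$ coming from the splitting has to be reorganised through the antisymmetry of $\widetilde B$ so that at most one derivative falls on $z$, and the resulting time-dependent coefficients must be shown to be $\Prb$-a.s.\ integrable — which is exactly what the a priori bound \eqref{eq.aNSCHestimate} provides.
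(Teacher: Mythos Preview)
Your overall scheme is the standard one and is sound in spirit, but the argument as written has a genuine circularity that makes the final step fail. You let the Gronwall coefficient $\Phi$ contain $\int_0^T|Au(t,x)|_2^2\,\ddt$, and then introduce the auxiliary event $\Omega_M=\{\int_0^T|Au|_2^2\le M,\ldots\}$ to control it. You then write: ``Choosing first $M$ with $\Prb(\Omega_M)>1-\tfrac12\Prb(\|z\|_X\le\eta)$ and then $\eta$ small enough (depending on $M$)''. This is circular: the threshold for $M$ depends on $\eta$ through $\Prb(\|z\|_X\le\eta)$, while the admissible $\eta$ depends on $M$ through the Gronwall factor $e^{cM}$. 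As $\eta\downarrow0$ the Gaussian small-ball probability $\Prb(\|z\|_X\le\eta)$ tends to $0$, so there is no reason why a compatible pair $(M,\eta)$ should exist; $\Omega_M$ and $\{\|z\|_X\le\eta\}$ are not independent and you have no pathwise inclusion $\{\|z\|_X\le\eta\}\subset\Omega_M$ to fall back on (that inclusion is precisely what the comparison estimate is supposed to deliver).

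The fix is easy and is essentially what the paper does: arrange the energy estimate so that $\Phi$ involves only $y$ and $z$, never $u$. If you expand $\widetilde B(u,u+\alpha^2Au)-\widetilde B(y,y+\alpha^2Ay)$ with $u=y+\rho+z$ \emph{fully} and use $\la\widetilde B(\rho,\cdot),\rho\ra=0$, every surviving term has $y$ or $z$ in the first slot; by Proposition~\ref{prop.1.1}~(i),(v) and Young's inequality each of them is bounded by a fraction of the dissipation plus $C(|Ay|_2^2+|Az|_2^2+|y|_\sV^2+|z|_\sV^2)(|\rho|_2^2+\alpha^2|\nabla\rho|_2^2)$ plus a source term quadratic in $(y,z)$. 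Then $\int_0^T\Phi$ is a \emph{deterministic} functional of $y$ plus $C\|z\|_X^2$, the whole bound on $\rho$ becomes a pathwise function of $\|z\|_X$, the event $\Omega_M$ is no longer needed, and the support argument for the centred Gaussian $z$ finishes the proof directly.

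The paper takes an equivalent but slightly more economical route: it compares $u^m$ to the Ornstein--Uhlenbeck process $W_A$ (solving $\dd Z=-AZ\,\ddt+Q\,\dd W$, $Z(0)=x$) instead of splitting into a deterministic flow $y$ and a convolution $z$. Setting $v^m=u^m-P_mW_A$ and testing against $(I+\alpha^2A)v^m$, the antisymmetry kills the $\widetilde B(v^m,\cdot)$ contribution at once, and Gronwall gives $|v^m(T)|_2^2+\alpha^2|\nabla v^m(T)|_2^2\le c\big(e^{c\int_0^T|AW_A|_2^2\,\dds}-1\big)$, a bound depending \emph{only} on the Gaussian process $W_A$. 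Then $\Prb(|u^m(T,x)|_2>\delta)$ is dominated by a probability computed solely from $W_A$, uniformly in $m$, and the Gaussian support property concludes. Compared with your three-term splitting, this saves one layer of bilinear expansion and avoids the circularity automatically.
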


\begin{proof}
 We set $v^{m}(t)=u^{m}(t,x)-P_{m}W_A(t)$, where $W_A$ is the solution of the linear stochastic equation 
\[
   \begin{cases}
       \dd Z=-AZ\dd t +Q\dd W(t)\\
       Z(0)=x.
   \end{cases}
\]
Then, $v^{m}$ solves
\[ 
  \begin{cases}
      \dd v^{m}(t)= \left(-Av^{m}(t) +(I+\alpha^2 A)^{-1}\widetilde B_{m}(v^{m}+P_{m}W_A,v^{m}+P_{m}W_A+\alpha^2 A (v^{m}+P_{m}W_A))\right)\dd t,\\
      v^{m}(0)=0.
  \end{cases}
\]
By multilplying by $(I+\alpha^2 A)v^{m}$ both sides and integrating over $\qspace $ we get
\begin{eqnarray*}
  \frac{1}{2}\frac{\dd}{\dd t}( |v^{m}(t)|_2^2+\alpha^2|\nabla v^{m}|_2^2 ) &=&
      -\nu \left( |\nabla v^{m}(t)|_2^2+\alpha^2|Av^{m}|_2^2  \right) \\
    && +\la \widetilde B_{m}(v^{m}+P_{m}W_A,v^{m}+P_{m}W_A+\alpha^2 A (v^{m}+P_{m}W_A)),v^{m} \ra\\
    &=& -\nu \left( |\nabla v^{m}(t)|_2^2+\alpha^2|Av^{m}|_2^2  \right) \\
    && +\la \widetilde B_{m}(P_{m}W_A,v^{m}+\alpha^2 A v^{m}),v^{m} \ra  \\ 
    &&+ \la \widetilde B_{m}(P_{m}W_A,P_{m}W_A+\alpha^2 A P_{m}W_A),v^{m} \ra
\end{eqnarray*}
By Proposition \ref{prop.1.1} we have
\begin{eqnarray*}
  \la \widetilde B_{m}(P_{m}W_A,v^{m}+\alpha^2 A v^{m}),v^{m} \ra     &\leq&     c |AW_A|_2|v^{m}|_2|v^{m}|_\sV + c\alpha^2|AW_A|_2|Av^{m}|_2|v^{m}|_\sV
\\
   &\leq&  c |AW_A|_2|v^{m}|_2|v^{m}|_\sV + c\alpha^2|AW_A|_2|Av^{m}|_2|v^{m}|_\sV.
\end{eqnarray*}
By the Poincaré inequality $|v^{m}|_\sV\leq c|\nabla v^{m}|_2$ the last term is bounded by
\[
 \leq  c |AW_A|_2|v^{m}|_2|\nabla v^{m}|_2 + c\alpha^2|AW_A|_2|Av^{m}|_2|\nabla v^{m}|_2.
\]
Using Young inequality, we can bound this last quantity to obtain
\begin{equation} \label{eq.irr1}
  \la \widetilde B_{m}(P_{m}W_A,v^{m}+\alpha^2 A v^{m}),v^{m} \ra \leq  
   c |AW_A|_2^2\left(| v^{m}|_2^2 + \alpha^2|\nabla v^{m}|_2^2\right) +\frac{1}{2}|\nabla v^{m}|_2^2+\frac{\alpha^2}{2}|Av^{m}|_2^2
\end{equation}
Still by Proposition \ref{prop.1.1} we have
\begin{eqnarray*}
  \left|\la \widetilde B_{m}(P_{m}W_A,P_{m}W_A+\alpha^2 AP_{m}W_A),v^{m} \ra\right|     &\leq&    
         \left|\la \widetilde B_{m}(P_{m}W_A,P_{m}W_A),v^{m} \ra\right|+\alpha^2\left|\la \widetilde B_{m}(P_{m}W_A,AP_{m}W_A),v^{m} \ra\right|
\\
   &\leq&  c |W_A|_\sV^{3/2}|AW_A|_2^{1/2}|v^{m}|_2 + c\alpha^2|AW_A|_2^2|v^{m}|_\sV\\
   &\leq&  c |AW_A|_2^2\left(|v^{m}|_2+\alpha^2|\nabla v^{m}|_2  \right).
\end{eqnarray*}
In the last inequality we used the fact that $|z|_\sV\leq c\|z\|_{\sH^2}\leq c|Az|_2$ for some $c>0$, independent by $z\in D(A)$.
Using the inequality $a\leq 2+2a^2$ we get for some $c>0$  
\begin{equation}   \label{eq.irr2}
  \left|\la \widetilde B_{m}(P_{m}W_A,P_{m}W_A+\alpha^2 AP_{m}W_A),v^{m} \ra\right|    \leq  c |AW_A|_2^2\left(1+|v^{m}|_2^2+\alpha^2|\nabla v^{m}|_2^2  \right).
\end{equation}
Taking into account \eqref{eq.irr1} and \eqref{eq.irr2} we obtain 
\[
  \frac{\dd}{\dd t}( |v^{m}(t)|_2^2+\alpha^2|\nabla v^{m}|_2^2 )+ \frac{1}{2}|\nabla v^{m}|_2^2+\frac{\alpha^2}{2}|Av^{m}|_2^2
    \leq c |AW_A|_2^2\left(1+|v^{m}|_2^2+\alpha^2|\nabla v^{m}|_2^2  \right)
\]
Then, by Gronwall lemma we get, for some $c>0$ independent by $m$ and $v^{m}$
\begin{eqnarray*}
  |v^{m}(t)|_2^2+\alpha^2|\nabla v^{m}|_2^2&+&\int_0^te^{c\int_0^s|AW_A(\tau)|_2^2\dd \tau}\left(|\nabla v^{m}|_2^2+\alpha^2|Av^{m}|_2^2 \right)\dd s\\
     &\leq& c\int_0^te^{c\int_0^s|AW_A(\tau)|_2^2\dd \tau}|AW_A(s)|_2^2\dd s \\
     &=& c\left(e^{c\int_0^t|AW_A(s)|_2^2\dd s}-1 \right).
\end{eqnarray*}
Then, we deduce
\begin{eqnarray}
  \Prb(\{|u^{m}(T,x)|_2>\delta\})&\leq& \Prb(\{|v^{m}(T,x)|_2^2+|W_A(T)|_2^2>\delta^2/2\})\\
    &\leq&   \Prb(\{c'(e^{c\int_0^T|AW_A(s)|_2^2\dd s}-1)+|W_A(T)|_2^2>\delta^2/2\})
\end{eqnarray}
where $c,c'>0$ are real constants, independent by $m$.
By the gaussianity of $W_A$ we deduce $\Prb(\{|u^{m}(T,x)|_2>\delta\})<1-\varepsilon$, where $\varepsilon >0$ is independent by $m$.
 \end{proof}

%

\begin{proof}[Proof of Theorem \ref{thm.invmeas}]
By Corollary \ref{coro.thight} and Krylov-Bogolioubov theorem we deduce that there exists an invariant probability measure $\mu$ for the transition semigroup $P_t$, $t\geq0$.

%


{\bf Claim : $\mu(D(A))=1$ and \eqref{eq.muexp} holds}.\\

We consider, for $M>0$ and $\varepsilon$ such that $-\nu+2\varepsilon\lambda_1^{-1}\Tr[Q^*(I+\alpha^2A)Q]< 0$ the function 
\begin{eqnarray*}
\varphi_{\varepsilon,M}(y)=  \begin{cases} 
                   \frac{Me^{\varepsilon(|x|_2^2+\alpha^2|\nabla x|_2^2)}(|\nabla x|_2^2+\alpha^2|A x|_2^2)}{M+e^{\varepsilon(|x|_2^2+\alpha^2|\nabla x|_2^2)}(|\nabla x|_2^2+\alpha^2|A x|_2^2)} & \text{if }y\in D(A)\\
                   M  &\text{elsewhere}
                 \end{cases}
\end{eqnarray*}
which is continuous and bounded in $\sV$.
Let $x_0\in \sV$ where the ergodic theorem applies for $\varphi_{\varepsilon,M}$ :
\[
\int_{\sV}\varphi_{\varepsilon,M}(x)\mu(dx) = \lim_{T\to\infty}\frac{1}{T}\int_0^T P_t\varphi_{\varepsilon,M}(x_0)\dd t.
\]
Since $\varphi_{\varepsilon,M}$ is bounded and continuous on $\sV$, $P_t^m \varphi_{\varepsilon,M}(x_0)\to P_t \varphi_{\varepsilon,M}(x_0)$ as $m\to \infty$.
Then by Proposition \ref{prop.momexp} 
\begin{eqnarray*}
   \frac{1}{T}\int_0^T P_t\varphi_{\varepsilon,M}(x_0)\dd t 
       &=& \frac{1}{T}\lim_{m\to \infty}  \int_0^T P_t^m\varphi_{\varepsilon,M}(x_0)\dd t\\
    &\leq& \frac{1}{T}\lim_{m\to \infty}  \int_0^T P_t^m
    \left(e^{\varepsilon(|\cdot|_2^2+\alpha^2|\nabla \cdot|_2^2)}(|\nabla\cdot|_2^2+\alpha^2|A \cdot|_2^2)   \right)(x_0)\dd t\\
       &\leq & \frac{1}{T} \frac{e^{\varepsilon(|x_0|_2^2+\alpha^2|\nabla x_0|_2^2)} +K_\varepsilon T}{\varepsilon(\nu-2\varepsilon\lambda_1^{-1}\Tr[Q^*(I+\alpha^2A)Q])}.
\end{eqnarray*}
By letting $T\to\infty$ and we get that for any $M>0$ 
\[
   \int_{\sV}\varphi_{\varepsilon,M}(x)\mu(dx) 
   \leq  \frac{K_\varepsilon }{\varepsilon(\nu-2\varepsilon\lambda_1^{-1}\Tr[Q^*(I+\alpha^2A)Q])}.
\]
Then by Fatou lemma we obtain \eqref{eq.muexp} which implies $\mu(D(A))=1$.


{\bf Claim : uniqueness }\\
Let us assume that $\mu$ is an invariante measure and $x_0\in D(A)$ is in the support of $\mu$.
We shall show that for any $\delta>0$, $\mu(B_{\sV}(0,\delta))>0$. 
Let us fix $\delta>0$.
Since the semigroup is irreductible in $\sV$,  there exists $t>0$, $r>0$ such that $P_t\chi_{B_{\sV}(0,\delta)}(x_0)\geq r$.
By the strong Feller property, $x\mapsto P_t\chi_{B_{\sV}(0,\delta)}(x)$ is continuous in $D(A) $ and $P_t\chi_{B(0,\delta)}(x)\geq r/2$ is some ball $B_{D(A)}(x_0,\varepsilon)$, where $\varepsilon>0$. 
This implies 
\begin{eqnarray*}
  \mu(B_{\sV}(0,\delta'))
   &=& \int_\sV P_t\chi_{B_{\sV}(0,\delta)}(x)\mu(dx) \\
   &\geq&    \int_{B_{D(A)}(x_0,\varepsilon)} P_t\chi_{B_{\sV}(0,\delta)}(x)\mu(dx)\\
   &\geq& \frac{r}{2}\mu(B_{D(A)}(x_0,\varepsilon))>0.
\end{eqnarray*}
Since $x=0$ is in the support of any invariant measure, we deduce that the invariant measure is unique.
\end{proof}
\end{document}